\setlist[enumerate]{leftmargin=*}
\newtheorem{theorem}{Theorem}[section]
\newaliascnt{lemma}{theorem}
\newaliascnt{corollary}{theorem}
\newaliascnt{definition}{theorem}
\newaliascnt{remark}{theorem}
\newaliascnt{proposition}{theorem}
\newaliascnt{conjecture}{theorem}
\newaliascnt{example}{theorem}
\newaliascnt{question}{theorem}
\newaliascnt{claim}{theorem}
\newtheorem{lemma}[lemma]{Lemma}
\newtheorem*{lemma*}{Lemma}
\newtheorem{corollary}[corollary]{Corollary}
\newtheorem*{corollary*}{Corollary}
\newtheorem{definition}[definition]{Definition}
\newtheorem*{definition*}{Definition}
\newtheorem{remark}[remark]{Remark}
\newtheorem*{remark*}{Remark}
\newtheorem{proposition}[proposition]{Proposition}
\newtheorem*{proposition*}{Proposition}
\newtheorem*{conjecture*}{Conjecture}
\newtheorem{example}[example]{Example}
\newtheorem*{example*}{Example}
\newtheorem*{problem*}{Problem}
\newtheorem*{question*}{Question}
\newtheorem*{claim*}{Claim}
\DeclareMathOperator{\inter}{int}
\DeclareMathOperator{\conv}{conv}
\DeclareMathOperator{\rc}{rc}
\DeclareMathOperator{\obs}{Obs}
\def\D{\mathcal{D}}
\def\cM{\mathcal{M}}
\def\cO{\mathcal{O}}
\def\cS{\mathcal{S}}
\def\R{\mathbb{R}}
\def\Z{\mathbb{Z}}
\def\N{\mathbb{N}}
\def\Q{\mathbb{Q}}
\DeclareMathOperator{\vol}{vol}
\DeclareMathOperator{\aff}{aff}
\DeclareMathOperator{\lin}{lin}
\DeclareMathOperator{\rec}{rec}
\DeclarePairedDelimiter{\card}{\lvert}{\rvert}
\newcommand{\setcond}[2]{\left\{ #1 \,:\, #2 \right\}}
\mathchardef\mhyphen="2D
\def\true{\mathrm{true}}
\def\false{\mathrm{false}}
\DeclareMathOperator{\BCPI}{BCPI}
\DeclareMathOperator{\BCLI}{BCLI}
\DeclareMathOperator{\ca}{ca}
\newcommand{\floor}[1]{\left\lfloor #1\right\rfloor}
\newcommand{\ceil}[1]{\left\lceil #1 \right\rceil}
\renewenvironment{framed}[1][\hsize]
   {\MakeFramed{\hsize#1\advance\hsize-\width \FrameRestore}}%
   {\endMakeFramed}
\DeclareFontFamily{U}{mathx}{\hyphenchar\font45}
\DeclareFontShape{U}{mathx}{m}{n}{
      <5> <6> <7> <8> <9> <10>
      <10.95> <12> <14.4> <17.28> <20.74> <24.88>
      mathx10
      }{}
\DeclareSymbolFont{mathx}{U}{mathx}{m}{n}
\DeclareMathAccent{\widebar}{0}{mathx}{"73}
\def\input@path{{./Images/}{./}}
\begin{document}

\title{Complexity of linear relaxations in integer programming}

\author{Gennadiy Averkov}
\address{BTU Cottbus-Senftenberg\\
  Platz der Deutschen Einheit 1\\
  03046 Cottbus\\
  Germany}
\email{averkov@b-tu.de}

\author{Matthias Schymura}
\address{BTU Cottbus-Senftenberg\\
  Platz der Deutschen Einheit 1\\
  03046 Cottbus\\
  Germany}
\email{schymura@b-tu.de}

\thanks{The second author was partially supported by the Swiss National Science Foundation (SNSF) within the project \emph{Lattice Algorithms and Integer Programming (Nr.~185030)}.}


\keywords{}


\begin{abstract}
For a set $X$ of integer points in a polyhedron, the smallest number of facets of any polyhedron whose set of integer points coincides with~$X$ is called the relaxation complexity~$\rc(X)$.
This parameter was introduced by Kaibel \& Weltge (2015) and captures the complexity of linear descriptions of~$X$ without using auxiliary variables.

Using tools from combinatorics, geometry of numbers, and quantifier elimination, we make progress on several open questions regarding $\rc(X)$ and its variant $\rc_\Q(X)$, restricting the descriptions of~$X$ to rational polyhedra.

As our main results we show that $\rc(X) = \rc_\Q(X)$ when: (a) $X$ is at most four-dimensional, (b) $X$ represents every residue class in $(\Z/2\Z)^d$, (c) the convex hull of~$X$ contains an interior integer point, or (d) the lattice-width of~$X$ is above a certain threshold.
Additionally, $\rc(X)$ can be algorithmically computed when~$X$ is at most three-dimensional, or $X$ satisfies one of the conditions (b), (c), or (d) above.
Moreover, we obtain an improved lower bound on $\rc(X)$ in terms of the dimension of~$X$.
\end{abstract}

\maketitle

\section{Introduction}

Encoding discrete optimization instances as integer vectors satisfying a system of linear constraints is a fundamental principle of combinatorial optimization, successfully applied in a multitude of cases in the last six decades.  This approach establishes a connection to integer programming and, by this, allows to use linear-programming based solution techniques. In the course of development,  classical combinatorial optimization problems have been endowed with standard formulations as integer programs. For example, the formulation based on the subtour elimination constraints is the standard formulation of the traveling salesman problem. Nevertheless, we would like to draw attention to the fact that if a certain discrete set of feasible solutions is represented as a set $X$ of integer vectors, then a priori there are many different ways to describe $X$ via a system of linear inequalities in integer variables. Thus, it makes sense to investigate the family of \emph{all} possible representations of $X$ using integer-programming constraints in order to detect the most ``interesting'' ones. So far, there have been many results about the tight descriptions based on the facet-defining inequalities for the convex hull of $X$. The interest in tight descriptions is easily motivated by the fact that knowing the inequalities that describe the convex hull of $X$ allows to optimize a linear objective over $X$ \emph{exactly} using linear programming.
However, since tight descriptions can have a very large size, we believe that one should not only focus on the tightness but also investigate possibilities of finding formulations of small size.
While the size of the description is determined by the number of constraints and the size of the coefficients, here we only address problems related to the number of constraints (regarding the size of the coefficients, we refer to the recent work \cite{hojny2018strong} of Hojny). 

We work at the level of general integer programming, which means that we consider an arbitrary finite set $X \subseteq \Z^d$ of integer points in a polyhedron and investigate possibilities to describe this set within the integer lattice $\Z^d$ by a possibly small number of linear inequalities in integer variables. Our research is motivated by a recent contribution of Kaibel \& Weltge \cite{kaibelweltge2015lowerbounds} (see also Weltge's PhD thesis \cite{weltge2015diss}), who posed a number of fundamental problems in this context. We also point out that, in a similar spirit, descriptions of sets of $0/1$ points within the discrete hypercube $\{0,1\}^d$ were considered by Jeroslow \cite{jeroslow1975ondefining} in the 1970s and are an important subject in the theory of social choice~\cite{hammeribarakipeled1981threshold,taylorzwicker1999simplegames}.

We now introduce some notation and present our results.

\begin{definition}
	For a system of linear constraints $Ax \le b, \ U x = v$ with real coefficients, consider
	\begin{equation} \label{X:descr}
		X = \setcond{x \in \Z^d}{A x \le b, \ U x = v}.
	\end{equation} 
	We call the system $A x \le b, \ U x=v$, as well as the polyhedron that it defines, a \emph{relaxation} of $X$ within the integer lattice $\Z^d$. 
	
	When \eqref{X:descr} holds and $A x \le b$ is a system of $k$ inequalities, we say that \emph{$X$ is described by $k$ linear inequalities within $\Z^d$}. If the coefficients of $A,U,b,v$ are rational numbers, we say that \emph{$X$ is described by~$k$ rational linear inequalities within $\Z^d$}. See Figure~\ref{fig:square-descriptions} for an illustration.
	
	The minimal $k$ such that $X$ can be described by~$k$ linear inequalities (resp. rational linear inequalities) within $\Z^d$ is called the \emph{relaxation complexity} of $X$ (resp.~\emph{rational relaxation complexity} of $X$) within~$\Z^d$ and denoted by $\rc(X)$ (resp.~$\rc_\Q(X)$).
\end{definition}

\begin{figure}[ht]
	\hfill\includegraphics[scale=.8,page=1]{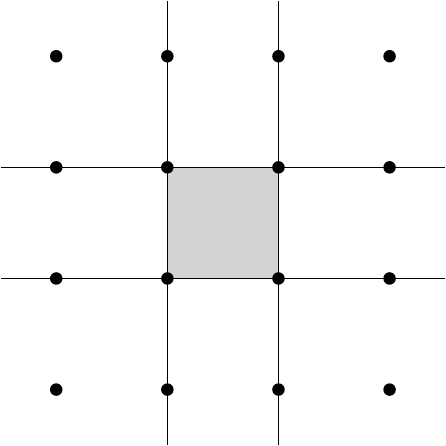}
	\hfill\includegraphics[scale=.8,page=2]{square-descriptions.pdf}
	\hfill\,
	\caption{Descriptions of $\{0,1\}^2$ by four and three (rational) linear inequalities within $\Z^2$.}
	\label{fig:square-descriptions}
\end{figure}

We call a set $X \subseteq \Z^d$ satisfying $\conv(X) \cap \Z^d = X$ \emph{lattice-convex}.
It is easy to see that a finite subset $X$ of $\Z^d$ has a relaxation within $\Z^d$ if and only if $X$ is lattice-convex.
Since in combinatorial optimization one is usually interested in finite subsets $X \subseteq \Z^d$, we deal with finite lattice-convex sets in the sequel.

The two values $\rc(X)$ and $\rc_\Q(X)$ were recently introduced by Kaibel \& Weltge~\cite{kaibelweltge2015lowerbounds}. In terms of polyhedra, $\rc(X)$ and $\rc_\Q(X)$ is the minimum number of facets of a polyhedron (resp.~rational polyhedron) $P$ satisfying $X = P \cap \Z^d$.
Kaibel \& Weltge posed the following three general questions for arbitrary finite lattice-convex sets:

\begin{enumerate}[label=(Q\arabic*)]
	\item \label{q1} Does the inequality $\rc(X) \geq \dim(X) + 1$ hold?
	\item \label{q2} Do $\rc(X)$ and $\rc_\Q(X)$ coincide?
	\item \label{q3} Are $\rc(X)$ and $\rc_\Q(X)$ algorithmically computable?
\end{enumerate}

We note that a positive answer to \ref{q2} implies a positive answer to \ref{q1}, because every rational relaxation of~$X$ is necessarily bounded within the affine hull of~$X$, and thus $\rc_\Q(X) \ge \dim(X)+1$.
On the other hand, a positive answer to \ref{q2} simplifies but does not automatically resolve \ref{q3}, because~\ref{q3} is open for both $\rc(X)$ and $\rc_\Q(X)$.
Regarding \ref{q3} we point out that this question is open to the extent that we do not even know $\rc(X)$ for very concrete and simple looking examples, like the set $\Delta_d = \{0,e_1,\ldots,e_d\}$ of lattice points of the standard simplex.

Our first contribution concerns \ref{q1} and provides an improved lower bound on the relaxation complexity in terms of the dimension:

\begin{theorem}
	\label{thm:rc-vs-dim-intro}
	Let $X \subseteq \Z^d$ be a finite lattice-convex set of dimension $\dim(X) \geq 4$. Then,
	\[
	\rc(X) > \log_2 (\dim(X)) - \log_2 \log_2 (\dim(X)).
	\]
\end{theorem}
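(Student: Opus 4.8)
The plan is to reduce the theorem to a single combinatorial core inequality and then verify that inequality by a sign-pattern analysis of the relaxation. Concretely, I would first pass to the affine hull of $X$ and assume $X$ is full-dimensional, so $\dim(X)=d$, and fix a relaxation $a^s\cdot x\le b_s$ ($s\in[k]$) realizing $k=\rc(X)$, with $P=\{x:\ a^s\cdot x\le b_s,\ s\in[k]\}$ and $P\cap\Z^d=X$. The target then becomes the clean estimate
\[
\dim(X) < \rc(X)\cdot 2^{\rc(X)}.
\]
Granting this, the theorem follows by elementary algebra: writing $k=\rc(X)$ and $d=\dim(X)$, if we had $k\le\log_2 d-\log_2\log_2 d=\log_2(d/\log_2 d)$, then $2^k\le d/\log_2 d$ and also $k\le\log_2 d$, hence $k\,2^k\le d$, contradicting the displayed bound. (The hypothesis $d\ge 4$ guarantees $\log_2\log_2 d>0$, so all quantities make sense.)

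For the core inequality I would set up a simplex inside $X$. Choose affinely independent lattice points $v_0,\dots,v_d\in X$ and put $u_i=v_i-v_0$ for $i\in[d]$, so that $u_1,\dots,u_d$ is a basis of $\R^d$. For $T\subseteq[d]$ consider the lattice point $p_T=v_0+\sum_{i\in T}u_i$, and set $c^s_i=a^s\cdot u_i$ and $\beta_s=b_s-a^s\cdot v_0\ge 0$. Since $P\cap\Z^d=X$, we obtain the exact dictionary
\[
p_T\in X\iff \sum_{i\in T}c^s_i\le\beta_s\ \text{ for all }s\in[k],
\]
with $c^s_i\le\beta_s$ for all $i,s$. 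The first useful observation is that no direction $u_i$ can satisfy $c^s_i\le 0$ for every $s$: otherwise $v_0+t\,u_i\in P\cap\Z^d=X$ for all integers $t\ge 0$, contradicting finiteness of $X$. Hence, labelling each $i\in[d]$ by its sign pattern $\chi_i\in\{0,1\}^k$, where $\chi_i(s)=1$ iff $c^s_i>0$, we get $\chi_i\ne\mathbf 0$, so there are at most $2^k-1$ possible labels and, by pigeonhole, a label shared by at least $d/(2^k-1)$ of the directions.

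The crux, and the step I expect to be the main obstacle, is to bound the number of directions carrying a common sign pattern by $O(\rc(X))$, which is exactly what upgrades the trivial factor $2^k-1$ to the required $k\,2^k$. Within a monochromatic class $I$ only the constraints $s\in S^+=\{s:\chi(s)=1\}$ are active on subsets $T\subseteq I$, and one would like to exhibit, once $\card{I}$ is too large, a forbidden point $p_T\notin X$ that no single halfspace can legitimately cut while retaining $v_0,\dots,v_d$ — an argument in the spirit of covering a complete-graph-type structure by threshold graphs. The genuine difficulty is that the coefficients $c^s_i$ may be arbitrarily small and of wildly different magnitudes, so a naive count of sign patterns does not by itself forbid large classes; controlling these magnitudes is where I expect the real work to lie, presumably by choosing the simplex $v_0,\dots,v_d$ carefully (for instance via successive minima or lattice reduction, using geometry of numbers) so that the $c^s_i$ within a class become comparable, after which the threshold/extremal combinatorics closes the gap.

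Finally, I would record why the delicate magnitude-controlled step cannot be bypassed. A purely Ramsey-theoretic shortcut — extracting a monochromatic clique or independent set among the parallelogram points $p_{\{i,j\}}$ according to whether they lie in $X$ — only produces classes of size $\sim\log_2 d$, and hence yields the far weaker bound $\rc(X)\gtrsim\log_2\log_2 d$. Reaching the stated $\log_2 d-\log_2\log_2 d$ therefore genuinely requires the stronger per-class bound of order $\rc(X)$, which is the heart of the argument.
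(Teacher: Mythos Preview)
Your reduction to the core estimate $\dim(X)<\rc(X)\cdot 2^{\rc(X)}$ is correct, and the pigeonhole framework is the right shape, but the gap you flag is real and your suggested workaround (lattice reduction to control the magnitudes of the $c^s_i$, threshold combinatorics on the points $p_T$) is not how the argument closes. The paper uses a different labelling: instead of the sign patterns of the edge directions $u_i=v_i-v_0$, it labels each of the $d+1$ chosen affinely independent points $p_j\in X$ by its \emph{active set} $I_j=\{s\in[k]:a^s\cdot p_j=b_s\}$. The key input is a geometric lemma (Lemma~\ref{lem:unb:nonhollow} in the paper): an unbounded full-dimensional polyhedron containing an interior lattice point must contain infinitely many. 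Since $k\le d$ forces $P$ to be unbounded, every $p_j$ lies on the boundary of $P$, so $I_j\ne\emptyset$; this is the analogue of your $\chi_i\ne\mathbf 0$. Pigeonhole then gives some fixed nonempty $I\subseteq[k]$ with $I_j=I$ for at least $\lceil(d+1)/2^k\rceil\ge k$ indices, say $j=1,\dots,k$; set $H=\aff(\{p_1,\dots,p_k\})$. On $H$ all constraints indexed by $I$ are identically satisfied, so $H\cap P$ is a relaxation of $H\cap X$ described by at most $k-|I|\le k-1$ inequalities inside the $(k-1)$-dimensional affine lattice $H\cap\Z^d$; hence $H\cap P$ is unbounded, while $p_1,\dots,p_k$ lie in its relative interior (the non-$I$ constraints are strict there). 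The same lemma, applied inside $H$, then yields infinitely many lattice points in $H\cap P\subseteq P$, contradicting finiteness of $X$.

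So the missing idea is to pigeonhole on which constraints are \emph{tight} at the chosen points rather than on the signs of the edge derivatives, and to close with the interior-lattice-point lemma rather than with any extremal combinatorics. No magnitude control, lattice reduction, or clever choice of simplex is needed: the active-set labelling reduces the ambient dimension and the facet count in lockstep, which is exactly what lets the unboundedness argument recurse.
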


We prove this bound in Section~\ref{sect:rc-vs-dim}.
In Section~\ref{sect:small-dimensions}, we show that for $d\leq 4$, Question~\ref{q1} can be answered affirmatively.

Weltge~\cite[Sect.~7.5]{weltge2015diss} showed that in dimension two, $\rc(X)$ and $\rc_\Q(X)$ coincide and are computable.
Already, passing from dimension two to dimension three, both Questions~\ref{q2} and~\ref{q3} get considerably harder.
As our main contributions, we give an affirmative answer to both questions in various settings:
First, we successfully treat small dimensions in full generality.

\begin{theorem}
	\label{thm:main-low-dims} 
	Let $X$ be a finite lattice-convex subset of $\Z^d$.
	\begin{enumerate}[label=(\alph*)]
		\item \label{d-leq-4} If $d \leq 4$, then $\rc(X) = \rc_\Q(X)$.
		\item \label{d=3} If $d\leq 3$, then $\rc(X)$ and $\rc_\Q(X)$ can be computed algorithmically.
	\end{enumerate}
\end{theorem}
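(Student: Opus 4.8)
The plan is to isolate a single combinatorial mechanism — separating $X$ from a \emph{finite} set of forbidden lattice points — and to rationalize it by linear programming; the dimensional hypothesis enters only in guaranteeing that this finite set exists. First I would record an \emph{observer reduction}. Call $z \in \Z^d \setminus X$ a \emph{guard} if $\conv(X \cup \{z\}) \cap \Z^d = X \cup \{z\}$, and let $\obs(X)$ collect all guards. A short convexity argument shows that a closed convex set $C \supseteq \conv(X)$ satisfies $C \cap \Z^d = X$ if and only if $C \cap \obs(X) = \emptyset$: any lattice point of $C$ lying outside $X$ drags a guard into $C$ through the pyramid $\conv(X \cup \{z\}) \subseteq C$. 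Consequently, halfspaces $\langle a_i, x\rangle \le b_i$ ($i=1,\dots,k$) define a relaxation of $X$ exactly when $\langle a_i,x\rangle \le b_i$ holds for all $x \in X$ and, for every $g \in \obs(X)$, some inequality is strictly violated at $g$.

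Granting that $\obs(X)$ is finite, the rationalization is then routine. Fix, for each guard $g$, an index $\sigma(g)$ charged with cutting it off; the existence of a real relaxation with $k$ facets and separation pattern $\sigma$ is the consistency over $\R$ of the finite system $\langle a_i,x\rangle \le b_i$ ($x \in X$) together with $\langle a_{\sigma(g)},g\rangle > b_{\sigma(g)}$ ($g \in \obs(X)$). This is a finite system of linear inequalities with integer data, some strict and some not, so it is consistent over $\R$ if and only if it is consistent over $\Q$. Ranging over the finitely many patterns $\sigma$ yields $\rc(X) \le k \iff \rc_\Q(X) \le k$; combined with the trivial bound $\rc_\Q(X) \ge \rc(X)$ this gives $\rc(X) = \rc_\Q(X)$. (The resulting rational relaxation is automatically bounded in $\aff(X)$, since a rational polyhedron meeting $\Z^d$ in a finite set must be so.) Everything therefore reduces to finiteness of $\obs(X)$.

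The point is that $\obs(X)$ is finite precisely when $X$ is not too flat. If the lattice-width of $X$ is large, the base $\conv(X)$ is fat, the pyramid $\conv(X\cup\{z\})$ acquires extra lattice points once $\|z\|$ is large, and the guards are confined to a bounded shell; an interior integer point pins the guards down in the same way. These are exactly the large-width and interior-point regimes, where I would prove finiteness directly. The genuine difficulty is the complementary \emph{flat, small-width} case $\dim(X) < d$ (or thin full-dimensional $X$), where guards can march to infinity along primitive directions inside $\aff(X)$ and $\obs(X)$ is infinite — this is precisely the phenomenon by which an unbounded irrational relaxation could undercut every bounded one. To handle it in ambient dimension at most four I would dichotomize on the lattice-width: above the threshold the large-width result applies, while below it $X$ lies in boundedly many parallel lattice hyperplanes and I would argue, exploiting $d \le 4$, that such flat configurations are controlled by a lower-dimensional instance together with the interior-point and residue-class results. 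Ruling out a genuine unbounded advantage in these transverse directions is the main obstacle and is where the bound $d \le 4$ is spent.

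Finally, for the computability claim in dimension $d \le 3$, the same engine runs effectively. Where $\obs(X)$ is finite it is also computable — enumerate lattice points in an explicit bounded region and test the guard condition by counting lattice points in a pyramid — after which $\rc_\Q(X) \le k$ is decided by checking finitely many rational linear systems for feasibility. Searching $k = \dim(X)+1, \dim(X)+2, \ldots$ up to the number of facets of $\conv(X)$, a trivial upper bound for $\rc_\Q$, terminates and returns $\rc_\Q(X)$; by part~\ref{d-leq-4} this equals $\rc(X)$. The reason the computability statement is confined to $d \le 3$ is that there the finite observer set can plausibly be made effective in \emph{all} cases, including the flat ones, whereas the equality proof for $d = 4$ need not yield an effective bound on $\obs(X)$.
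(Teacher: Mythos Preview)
Your reduction to finitely many observers is correct \emph{when $\obs(X)$ is finite}, and the LP rationalization of a finite separation pattern is fine. The gap is that you then try to establish finiteness of $\obs(X)$ for all $X$ in ambient dimension $d\le 4$, and this is simply false: the paper notes explicitly that $\Delta_3=\{0,e_1,e_2,e_3\}$ is full-dimensional in $\Z^3$ and has infinitely many observers. It has lattice-width~$1$, no interior lattice point, and is not parity-complete, so none of the finiteness criteria you invoke apply, and your proposed ``reduce the flat case to lower dimension plus interior-point/residue-class results'' cannot cover it. The same happens for $\Delta_4$ in $\Z^4$. So for part~(a) your mechanism has nothing to say about such~$X$.

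The paper's proof of part~(a) takes a completely different route and does not touch observers: it shows (Lemma~\ref{lem:bounded:relaxation}) that for $d\le 4$ \emph{every} relaxation of a full-dimensional~$X$ is bounded, using Lov\'asz's structure theorem for maximal lattice-free sets and the classification of $2$-dimensional ones. Once boundedness is known, a trivial perturbation argument (push each facet slightly outward and replace its normal by a rational one) produces a rational relaxation with no more facets. Your sketch neither reaches this boundedness conclusion nor offers a substitute for it.

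For part~(b) the same issue bites harder. Your claim that for $d\le 3$ ``the finite observer set can plausibly be made effective in all cases'' is incorrect for the reason above; the paper must genuinely confront infinite $\obs(X)$. It does so via an extensive case analysis on three-dimensional maximal lattice-free polyhedra, reducing the infinite-observer situations to either explicit families with $\rc(X)=4$ or to configurations where $\obs(X)$ lies in a finite set together with finitely many parallel lattice lines, and then invokes a special mixed-integer quantifier-elimination result (Theorem~\ref{BCLI:one:int} and Corollary~\ref{cor:rc-special-Y}) to decide separability in that setting. None of this apparatus is present in, or replaceable by, your proposal.
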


Second, we identify large families of sets~$X$ in arbitrary dimensions that are defined by natural conditions and for which Questions~\ref{q2} and~\ref{q3} can be answered positively.

\begin{theorem}
	\label{thm:main-conds} 
	Let $X$ be a finite lattice-convex subset of $\Z^d$ satisfying one of the following conditions:
	\begin{enumerate}[label=(\alph*)]
		\item \label{par:complete} The points in $X$ represent every residue class in $(\Z/2\Z)^d$.
		\item \label{int:cond} The polytope $\conv(X)$ contains interior lattice points.
		\item \label{large:width} The lattice-width of $X$ is bigger than the finiteness threshold width $w^\infty(d)$, introduced in~\cite{blancohaasehofmannsantos2016finiteness}.
	\end{enumerate}
	Then, $\rc(X)$ and $\rc_\Q(X)$ coincide and can be computed algorithmically.
\end{theorem}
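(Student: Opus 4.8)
The plan is to reduce all three cases to a single mechanism: to show that, under each hypothesis, an optimal relaxation may be taken to be \emph{bounded}, and that for bounded relaxations the real and rational complexities coincide and are computable. Write $\rc^{\mathrm b}(X)$ for the least number of facets of a bounded relaxation of~$X$. Since every rational relaxation is automatically bounded (as recalled in the introduction) and every bounded relaxation is in particular a relaxation, one has the chain $\rc(X) \le \rc^{\mathrm b}(X) \le \rc_\Q(X)$. I would first prove two lemmas valid for \emph{every} finite lattice-convex~$X$, with no extra hypothesis: (L1) $\rc^{\mathrm b}(X) = \rc_\Q(X)$, and (L2) $\rc^{\mathrm b}(X)$ is algorithmically computable. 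Granting these, the theorem reduces to establishing $\rc(X) = \rc^{\mathrm b}(X)$ under each of \ref{par:complete}, \ref{int:cond}, \ref{large:width}, i.e.\ that \emph{unbounded relaxations give no advantage}; combined with (L1) this yields $\rc(X)=\rc_\Q(X)$ and with (L2) the computability.

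For (L1): given a bounded real relaxation $P = \bigcap_{i=1}^k\{a_i^\top x \le \beta_i\}$, first tighten each offset to $\beta_i = \max_{v}a_i^\top v$ over the (integral) vertices~$v$ of $\conv(X)$; this keeps $\conv(X) \subseteq P$, removes only non-$X$ lattice points, and keeps $P$ bounded. With tight offsets, containment of $\conv(X)$ is automatic for \emph{any} normals, so I am free to perturb. Boundedness of~$P$ means the~$a_i$ positively span~$\R^d$, which is an open condition, so a small rational perturbation $a_i \mapsto a_i' \in \Q^d$ keeps the polytope bounded and inside a fixed ball $B(0,R+1)$; the finitely many lattice points $z \notin X$ in that ball each satisfy a strict separation $a_{i}^\top z > \max_v a_{i}^\top v$ for some~$i$, an open condition preserved under small perturbation, while all lattice points outside $B(0,R+1)$ are excluded by boundedness. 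Thus the perturbed polytope is a rational relaxation with $\le k$ facets, giving $\rc_\Q(X) \le \rc^{\mathrm b}(X)$. For (L2) I would first establish an a priori, computable bound $R_0 = R_0(X,k)$ such that whenever a bounded relaxation with~$k$ facets exists, one exists inside $B(0,R_0)$ — this rests on the fact that a polytope with few facets and no lattice points beyond~$X$ cannot be arbitrarily large. The finite set of lattice points $z \in (B(0,R_0)\cap\Z^d)\setminus X$ is then computable, and ``there exist $k$ halfspaces containing $\conv(X)$, contained in $B(0,R_0)$, and excluding all these~$z$'' is a first-order sentence over the reals, decidable by quantifier elimination; increasing~$k$ from $\dim(X)+1$ yields $\rc^{\mathrm b}(X)$.

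It remains to show $\rc(X) = \rc^{\mathrm b}(X)$ under each condition, which I would obtain in the strong form ``every relaxation is bounded'' for \ref{int:cond} and \ref{par:complete}. For \ref{int:cond}, let $g \in \Z^d$ be an interior lattice point, so $B(g,\rho) \subseteq \conv(X)$ for some $\rho>0$; if a relaxation~$P$ had a recession direction $u \ne 0$, then the solid beam $B(g,\rho) + \R_{\ge 0}u \subseteq P$. A solid beam emanating from the \emph{lattice} point~$g$ contains infinitely many lattice points — along $g + \N m$ if~$u$ is a multiple of a primitive integer vector~$m$, and otherwise because the closure of $\{tu \bmod \Z^d\}$ is a positive-dimensional subtorus through the origin, so the ray approaches~$\Z^d$ within~$\rho$ infinitely often. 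This contradicts $P \cap \Z^d = X$ being finite, so $\rec(P) = \{0\}$. For \ref{par:complete}, the key geometric input is that a set meeting every residue class of $(\Z/2\Z)^d$ has $\conv(X)$ containing a fundamental domain~$F$ of~$\Z^d$ (a closed set with $F + \Z^d = \R^d$); then for any recession direction $u\ne 0$ every translate $F + tu \subseteq P$ contains a lattice point, again producing infinitely many lattice points in~$P$ and forcing boundedness.

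The main obstacle is condition~\ref{large:width}, where $\conv(X)$ may be hollow and relaxations need not a priori be bounded. Here I would argue through the recession cone: if~$P$ is an unbounded relaxation with $\rec(P) \ne \{0\}$, then $\conv(X) + \rec(P)$ is an unbounded convex set whose only lattice points are~$X$, which constrains~$X$ to sit in a lattice-free region transverse to the recession directions and hence bounds its lattice-width by a flatness-type quantity. The precise claim I must extract from the finiteness-threshold theorem of~\cite{blancohaasehofmannsantos2016finiteness} is that lattice-width exceeding $w^\infty(d)$ is incompatible with such an unbounded hollow extension, so that either $\conv(X)$ has an interior lattice point (reducing to \ref{int:cond}) or one lands in the finite, explicitly bounded list of wide hollow bodies, for which boundedness of optimal relaxations and a computable size bound can be checked directly. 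I expect this extraction — relating the admissible recession cones of a relaxation to the quantitative finiteness results on hollow bodies — together with the fundamental-domain lemma for~\ref{par:complete} and the a priori size bound underlying~(L2), to be the technically demanding parts; the rationalization step~(L1) is routine once boundedness is in hand.
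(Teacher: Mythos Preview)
Your plan hinges on lemma (L2) --- that $\rc^{\mathrm b}(X)$ is computable for \emph{every} finite lattice-convex $X$ --- but combined with your (correct) lemma (L1), this is exactly the statement that $\rc_\Q(X)$ is computable in general, which is the open Question~\ref{q3}. Your justification for (L2) is an a priori bound $R_0(X,k)$ on the diameter of a bounded $k$-facet relaxation, but you give no argument for its existence, and none is known; bounded relaxations can in principle be long and thin, with vertices far from~$X$, so ``a polytope with few facets and no lattice points beyond $X$ cannot be arbitrarily large'' is not a proof. Even if one only needs (L2) under the hypotheses \ref{par:complete}--\ref{large:width}, your plan does not use those hypotheses for the computability step, only for the boundedness step, so the gap remains. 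Secondary gaps: the claim that a parity-complete $X$ has $\conv(X)$ containing a fundamental domain of $\Z^d$ is asserted without proof, and for \ref{large:width} you only sketch what you would need to ``extract'' from the finiteness-threshold theorem, without showing that large lattice-width forces bounded relaxations.

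The paper takes a genuinely different route that avoids all of this. Rather than controlling the \emph{relaxation}, it controls the \emph{complement}: it introduces the set $\obs(X)$ of observers (those $y \in \Z^d\setminus X$ with $X\cup\{y\}$ lattice-convex) and shows that any system separating $X$ from $\obs(X)$ is already a relaxation, so $\rc(X)=\rc(X,X\cup\obs(X))$ and likewise for $\rc_\Q$. Under each of \ref{par:complete}, \ref{int:cond}, \ref{large:width} the set $\obs(X)$ is proved finite --- with explicit containments $\obs(X)\subseteq 2X-X$ for \ref{par:complete} and $\obs(X)\subseteq X + c_d\,\conv(X-X)$ for \ref{int:cond}, and via the definition of $w^\infty(d)$ for \ref{large:width} --- and then computed by an explicit algorithm. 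Once $\obs(X)$ is a known finite set, both the equality $\rc=\rc_\Q$ and computability follow simultaneously from a single MILP that decides, for each $k$, whether $X$ can be separated from finitely many points by $k$ inequalities. This bypasses any need for a size bound on relaxations and never asks whether unbounded relaxations exist.
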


The value of $w^\infty(d)$ is known in small dimensions.
We have $w^\infty(1) = w^\infty(2) = 0$, $w^\infty(3) = 1$, and $w^\infty(4) = 2$.
Theorem~\ref{thm:main-low-dims}\ref{d-leq-4} is based on the peculiarity that in dimensions at most four \emph{every} relaxation of~$X$ is bounded, whereas for dimensions $d \geq 5$ this is not necessarily the case.
We obtain this result using a description of maximal lattice-free sets provided by Lov\'asz~\cite{lovasz1989geometry}.
Once the boundedness of every relaxation is established, it is not hard to deduce that a relaxation $P$ of~$X$ having $k$ facets can be modified to a rational relaxation of $X$ that still has~$k$ facets by slightly moving the facets out and perturbing the facet normals to rational normals.
The details will be discussed in Section~\ref{sect:small-dimensions}.

The proof of Theorem~\ref{thm:main-low-dims}\ref{d=3} relies on Theorem~\ref{thm:main-conds}, so we discuss this first.
To prove Theorem~\ref{thm:main-conds}, we investigate the structure of the set of so-called observers of~$X$. We say that a point $y \in \Z^d  \setminus X$ outside a lattice-convex set $X \subseteq \Z^d$ is an \emph{observer} of~$X$ if the set $X \cup \{y\}$ is lattice-convex as well.

It turns out that whenever~$P$ is a (rational) polyhedron that contains~$X$ as a subset, but that does not contain any observer of $X$, then $P$ is actually a relaxation of~$X$.
Moreover, if~$X$ satisfies any of the three conditions in Theorem~\ref{thm:main-conds}, then the set of observers of~$X$ is finite and can be computed.
It thus remains to determine the minimum number~$k$ of inequalities that is sufficient to separate~$X$ from its observers.
The latter task can be carried out algorithmically using mixed-integer linear programming as an auxiliary tool.
This approach is developed in Section~\ref{sect:non-hollow}.

For proving Theorem~\ref{thm:main-low-dims}\ref{d=3} it suffices to consider sets~$X$ of dimension $3$, as for $\dim(X) \le 2$ we can use the computability of the relaxation complexity in dimension two established by Weltge~\cite{weltge2015diss} in his thesis. 
As a first step towards computability of $\rc(X)$ in dimension three, we characterize those~$X$ that have finitely many observers, and deal with them as in Theorem~\ref{thm:main-conds} as discussed before.
If the set of observers of~$X$ is infinite, then it still turns out to be structured enough for an algorithmic treatment.
To this end, we need to solve a special quantifier elimination problem for mixed-integer linear quantified expressions, which we find interesting in its own right.
These arguments will be laid out in Section~\ref{sect:d=3}.

\smallskip
\paragraph*{\textbf{Basic notation and terminology.}} The affine and convex hull of a set $X \subseteq \R^d$ are denoted by $\aff(X)$ and $\conv(X)$, respectively.
The line segment with endpoints $a,b \in \R^d$ is written as $[a,b] = \conv(\{a,b\})$.
By $\dim(X)$ we denote the dimension of $X$, which we define to be the dimension of the affine hull of~$X$.
For a positive integer~$m$, we write $[m] = \{1,\ldots,m\}$.
We use standard terminology from polyhedral theory such as polyhedron, vertex, face and facet and basic notions of the geometry of numbers, such as lattice. A lattice point is a point of the integer lattice $\Z^d$. We define an affine lattice to be a translation of a lattice by an arbitrary translation vector. By $e_1,\ldots,e_d$ we denote the standard unit vectors of~$\R^d$.
Two sets $A,B \subseteq \R^d$ are called unimodularly equivalent, if there is an affine unimodular transformation $f : \R^d \to \R^d$ such that $A = f(B)$, and where $f(x) = Ux + t$, with $U$ being an integral $(d \times d)$-matrix of determinant $\pm 1$, and $t \in \Z^d$.
Elements of $\R^d$ are interpreted as columns in analytic expressions.
For background information on these concepts we refer to the textbooks~\cite{schrijver1986theory} and~\cite{gruber2007convex}.

\section{Lower bounds on \texorpdfstring{$\rc(X)$}{rc(X)} in terms of the dimension of \texorpdfstring{$X$}{X}}
\label{sect:rc-vs-dim}

In this section, we prove Theorem~\ref{thm:rc-vs-dim-intro}. Although our result is still far from answering~\ref{q1}, it is the best lower bound known so far.
Our argument is inspired by the proof and the result of Weltge~\cite[Prop.~8.1.4]{weltge2015diss} who established the implication
\[
	\dim(X) \ge k! \qquad \Longrightarrow \qquad \rc(X)  \ge k
\]
for $X = \{0,e_1,\ldots,e_d\}$ by induction on $k$.
In fact, Weltge's argument can be applied for an arbitrary finite lattice-convex set $X$. 
We are able to replace~$k!$ by a single-exponential function in~$k$ by replacing his inductive argument with a pigeonhole-principle type argument. As in the proof of Weltge, we need the following auxiliary result.

\begin{lemma}[{\cite[Lem.~4]{averkov2013lovasz}}]
	\label{lem:unb:nonhollow}
	Every unbounded full-dimensional polyhedron $P \subseteq \R^d$ that contains a lattice point in its interior, contains infinitely many lattice points in its interior. 
\end{lemma}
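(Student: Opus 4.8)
The plan is to exploit the following mechanism: if $P$ is unbounded, full-dimensional, and contains a lattice point in its interior, then the infinitude of interior lattice points will be forced by translating the interior point along a direction in the recession cone. First I would reduce to the situation where we have an explicit ray inside $P$. Since $P$ is unbounded, its recession cone $\rec(P)$ is a nontrivial pointed or non-pointed cone, so we may fix a nonzero vector $r \in \rec(P)$. The key observation is that if $z \in \inter(P)$, then the half-line $\{z + t r : t \ge 0\}$ stays inside $P$, and in fact inside $\inter(P)$: indeed, for an interior point $z$ there is an open ball $B(z,\rho) \subseteq P$, and by convexity together with $r \in \rec(P)$ the whole ``open tube'' $\bigcup_{t \ge 0} B(z + tr, \rho)$ lies in $P$, so every $z + tr$ is interior.

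The main obstacle is that the recession direction $r$ need not be a rational (let alone integral) vector, so simply stepping by integer multiples of $r$ will not land on lattice points. The way I would handle this is to avoid relying on rationality of $r$ altogether and instead use a volumetric or pigeonhole argument. Concretely, I would argue that the open tube $T = \bigcup_{t \ge 0} B(z + tr, \rho)$ constructed above has infinite volume (it contains, for instance, infinitely many disjoint balls of radius $\rho/2$ centered at $z + t_j r$ for a suitably spaced sequence $t_j \to \infty$). A set of infinite volume in $\R^d$ cannot contain only finitely many lattice points: if it contained at most $N$ lattice points, then $T$ would be covered by the $N$ unit cubes centered at those points together with the region of $T$ at distance $> \sqrt{d}/2$ from every lattice point — but every point of $\R^d$ is within distance $\sqrt{d}/2$ of some lattice point, so $T \subseteq \bigcup_{p \in \Z^d \cap T}(p + [-\tfrac12,\tfrac12]^d)$, forcing $\vol(T) \le N$, a contradiction.

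Since $T \subseteq \inter(P)$, all these infinitely many lattice points lie in $\inter(P)$, which is exactly the claim. An alternative, perhaps cleaner, route avoids volume estimates: one can instead invoke a pigeonhole argument on the fractional parts. Consider the lattice points contained in a fixed translate-tube of finite cross-section around the ray; if only finitely many lattice points were interior, the ray would eventually escape every unit cube it passes through without ever meeting a lattice point, yet the transversal cross-sections $(z + tr) + (B(0,\rho) \cap H)$, where $H$ is a hyperplane transverse to $r$, sweep out an unbounded cylinder whose lattice-point count grows without bound by Minkowski-type reasoning. I would present the volume argument as the primary one, since it is elementary and self-contained, and I expect the only delicate point to be confirming rigorously that the tube $T$ is genuinely contained in the \emph{interior} of $P$ rather than merely in $P$; this is where the openness of the initial ball $B(z,\rho)$ and the closedness of $\rec(P)$ under the convex-combination argument must be used carefully.
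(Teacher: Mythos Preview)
The paper does not prove this lemma; it is quoted from \cite{averkov2013lovasz} and used as a black box, so there is no in-paper argument to compare against. Your overall plan is the natural one: pick $r\in\rec(P)\setminus\{0\}$, use the interior ball $B(z,\rho)\subseteq P$ to build the open half-cylinder $T=\bigcup_{t\ge 0}B(z+tr,\rho)\subseteq\inter(P)$, and then argue that $T\cap\Z^d$ is infinite. The verification that $T\subseteq\inter(P)$ is correct.

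The genuine gap is in the volume step. The claim ``a set of infinite volume in $\R^d$ cannot contain only finitely many lattice points'' is false, and your justification $T\subseteq\bigcup_{p\in\Z^d\cap T}\bigl(p+[-\tfrac12,\tfrac12]^d\bigr)$ does not hold: a point $x\in T$ is within distance $\sqrt{d}/2$ of \emph{some} lattice point, but that lattice point need not belong to~$T$. Concretely, the open convex strip $\{(x,y)\in\R^2:0.1<y<0.9\}$ has infinite area and contains no lattice points whatsoever; so infinite volume by itself, even together with openness and convexity, forces nothing. What rescues your argument is symmetry, which you have not used: the full cylinder $S=B(0,\rho)+\R r$ is an origin-symmetric convex body of infinite volume, so by van der Corput's extension of Minkowski's first theorem it contains infinitely many lattice points. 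These come in pairs $\pm w$; writing $w=v+tr$ with $\lvert v\rvert<\rho$, one of $w,-w$ has $t\ge 0$ and hence lies in the half-cylinder $B(0,\rho)+\R_{\ge 0}r=T-z$. Translating by the lattice point~$z$ then yields infinitely many lattice points in $T\subseteq\inter(P)$.
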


\begin{theorem}
	\label{thm:dim:dependence}
	Let $X \subseteq \Z^d$ be a finite lattice-convex set satisfying 
	\[
		\dim(X) \ge (k-1) \binom{k}{\floor{k/2}}
	\]
	for some integer value $k \ge 2$. Then $\rc(X) \ge k+1$. 
\end{theorem}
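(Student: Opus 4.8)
The plan is to show that if $\rc(X) \le k$ then $\dim(X) < (k-1)\binom{k}{\floor{k/2}}$, which is the contrapositive. So suppose $P = \{x : a_i^\top x \le b_i, \ i \in [k]\}$ is a relaxation of~$X$ with at most $k$ inequalities, i.e. $P \cap \Z^d = X$. The key idea, following Weltge, is to exploit \autoref{lem:unb:nonhollow}: since $X$ is finite but $P \cap \Z^d = X$ is finite, $P$ must be bounded in a suitable sense along the affine hull of~$X$, and in particular $P$ cannot contain an unbounded full-dimensional rational polyhedron with an interior lattice point. Working inside $\aff(X)$ and after a unimodular change of coordinates, I may assume $X$ is full-dimensional in $\R^n$ with $n = \dim(X)$, and I treat $P \cap \aff(X)$ as a polytope (not merely a polyhedron) with at most $k$ facet-defining inequalities.

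First I would attach to each facet inequality $a_i^\top x \le b_i$ a sign pattern recording, for a cleverly chosen family of lattice directions, on which side of the hyperplane the relevant points lie. The mechanism I have in mind is a pigeonhole argument on the coordinate directions $e_1, \ldots, e_n$ of $\R^n$. For each inequality $i \in [k]$ and each coordinate $j \in [n]$, the sign of the $j$-th entry of the normal vector $a_i$ is one of $\{+, -, 0\}$; but more usefully, I want to associate to each coordinate direction $j$ a vector in $\{0,1\}^k$ (or a similar combinatorial object of bounded size) that records which of the $k$ inequalities ``see'' movement in direction $e_j$ as increasing. Since there are only $\binom{k}{\floor{k/2}}$ many $k$-subsets of a given middle size, and the factor $(k-1)$ accounts for a secondary count, if $n = \dim(X)$ exceeds $(k-1)\binom{k}{\floor{k/2}}$ then two distinct coordinate directions must receive the same label.

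Once two directions $e_j$ and $e_{j'}$ share a label, the plan is to form a lattice direction $v = e_j - e_{j'}$ (or a combination with the matching sign pattern) such that every one of the $k$ inequalities is either invariant or behaves monotonically consistently along the line $x + tv$. Concretely, I want to produce a primitive lattice vector $v$ and a lattice point $p$ in the interior (relative to $\aff(X)$) of $P$ such that the ray $p + t v$, $t \ge 0$, never violates any of the $k$ inequalities strictly, i.e. stays inside $P$. Combined with \autoref{lem:unb:nonhollow}, this would force $P$ to contain infinitely many interior lattice points of $X$, contradicting the finiteness of $X = P \cap \Z^d$. The counting is arranged precisely so that the matching label guarantees the constructed direction is a genuine recession-type direction for the relaxation polyhedron.

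The hard part will be making the combinatorial encoding both correct and tight enough to yield exactly the bound $(k-1)\binom{k}{\floor{k/2}}$. There are two delicate points. First, I must ensure that the label I assign to each coordinate direction is fine enough that a collision really does yield a direction along which $P$ is unbounded, yet coarse enough that the label set has size at most $(k-1)\binom{k}{\floor{k/2}}$; the middle binomial coefficient $\binom{k}{\floor{k/2}}$ strongly suggests the labels are the ``balanced'' sign patterns or antichains among the $k$ inequalities, with the extra $(k-1)$ factor encoding a length or ordering parameter. Second, I must guarantee that the interior lattice point required by \autoref{lem:unb:nonhollow} actually exists and that the unboundedness survives the passage between $\aff(X)$ and $\R^d$; here the boundedness considerations that feed into the lemma must be handled carefully, since the lemma needs full-dimensionality and an interior lattice point. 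I expect the sign-pattern bookkeeping — translating ``same label'' into ``common recession direction for all $k$ inequalities simultaneously'' — to be the crux of the argument, with the geometry-of-numbers input (\autoref{lem:unb:nonhollow}) supplying the final contradiction essentially for free.
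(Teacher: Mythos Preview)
Your proposal has the right broad outline --- a pigeonhole argument feeding into \autoref{lem:unb:nonhollow} --- but the combinatorial object you attach the labels to is wrong, and this is a genuine gap, not a detail. Sign patterns of coordinate directions do not naturally produce the bound $(k-1)\binom{k}{\floor{k/2}}$: there are $3^k$ sign vectors, and a collision between two of them does not yield a recession direction of $P$ (if $e_j$ and $e_{j'}$ both have, say, all-positive signs, then $e_j - e_{j'}$ can have arbitrary signs). You correctly flag this as ``the crux'' and ``the hard part'', but you have not identified a mechanism that works, and I do not see how to make one out of sign patterns of coordinate directions.

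The paper's proof attaches labels to \emph{points of $X$}, not to coordinate directions. Pick $d+1$ affinely independent points $p_1,\ldots,p_{d+1} \in X$ and let $I_j \subseteq [k]$ be the set of inequalities active at $p_j$. Each $I_j$ is non-empty (otherwise $p_j$ is interior to the unbounded polyhedron $P$, and \autoref{lem:unb:nonhollow} gives the contradiction immediately). Now the middle binomial coefficient enters via Sperner's theorem: the inclusion-minimal elements among $I_1,\ldots,I_{d+1}$ form an antichain in $2^{[k]}$, hence there are at most $\binom{k}{\floor{k/2}}$ of them, and by pigeonhole one such minimal set $S$ is contained in at least $k$ of the $I_j$, say $S \subseteq I_1,\ldots,I_k$ with $S = I_1$. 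The contradiction is then obtained not by exhibiting a global recession direction, but by restricting to the $(k-1)$-dimensional affine span $H = \aff(\{p_1,\ldots,p_k\})$: within $H$ the inequalities indexed by $S$ are equalities, so $H \cap P$ is described by at most $k - |S| \le k-1$ inequalities and is therefore unbounded; and since $S = I_1$, the point $p_1$ lies in the relative interior of $H \cap P$, so \autoref{lem:unb:nonhollow} applies. The two ideas you are missing are thus (i) label points by their active sets rather than directions by sign patterns, and (ii) pass to a lower-dimensional affine slice rather than search for a recession direction of $P$ itself.
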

\begin{proof}
	By restricting considerations to the affine hull of $X$,
	without loss of generality, we can assume that $X$ is full-dimensional and thus $d = \dim(X) \ge (k-1) \binom{k}{\floor{k/2}}$. Assuming that $X$ has a relaxation 
	\[
		P = \setcond{x \in \R^d}{a_1^\intercal x \le \beta_1,\ldots,a_k^\intercal x \le \beta_k}
	\]
	given by $k$ linear inequalities, we derive a contradiction.
	To this end, fix $p_1,\ldots,p_{d+1}$ to be $d+1$ affinely independent points in $X$.  With each $j \in [d+1]$ we associate the set 
	\[
		I_j:= \setcond{i \in [k]}{a_i^\intercal p_j = \beta_i}
	\] of indices of the inequalities of the relaxation of $X$ that are active on~$p_j$. None of the sets $I_1,\ldots,I_{d+1}$ is empty. Indeed, the relaxation~$P$ is unbounded, because since $k\geq2$, we have $k \le (k-1) \binom{k}{\floor{k/2}} \le d$. Thus, if~$I_j$ were empty, then $p_j$ would be an interior lattice point of $P$. By Lemma~\ref{lem:unb:nonhollow} it then follows that~$P$ contains infinitely many lattice points, a contradiction. 
	
	To explain the proof idea, we first show a weaker assertion, namely that
	\begin{align}
	d = \dim(X) \ge (k-1) 2^k \qquad \Longrightarrow \qquad \rc(X)  \ge k+1.\label{eqn:weaker-rc-vs-dim-bound}
	\end{align}
The inequality $\dim(X) \ge (k-1) 2^k$ implies that $I_1,\ldots,I_{d+1}$ is a list of at least $(k-1) 2^k + 1$ subsets of~$[k]$. It follows that there exists a subset $I \subseteq [k]$ that occurs in the list $I_1,\ldots,I_{d+1}$ at least $\ceil{(d+1 )/ 2^k}$ times, which means that the set
	\[
		J := \setcond{ j \in [d+1] }{I_j = I}
	\]
	has at least $\ceil{(d+1)/ 2^k}$ elements. We have 
	\[
		\frac{d+1}{2^k} \ge \frac{(k-1) 2^k + 1}{2^k} > k-1,
	\]
	so that $\card{J} \geq k$. Without loss of generality assume that $I_1 = \ldots = I_k = I$. Let $H := \aff(\{p_1,\ldots,p_k\})$. The inequality $a_j^\intercal x \le b_j$ holds with equality for all $x \in H$ and all $j \in I$. Hence, the polyhedron
	\[
		Q = \setcond{x \in H}{a_j^\intercal x \le b_j \ \text{for all} \ j \in [k] \setminus I}
	\]
	is a relaxation of the $(k-1)$-dimensional set $H \cap X$ within the affine lattice $H \cap \Z^d$. The relaxation is given by $k - |I| \le k-1$ inequalities. Since $\dim(H) = k-1$, the relaxation $Q$ is unbounded. By construction, the $k - |I|$ inequalities defining the relaxation $Q$ hold strictly on $p_1,\ldots,p_k$. Thus, $p_1,\ldots,p_k$ belong to the relative interior of $Q$, which contradicts Lemma~\ref{lem:unb:nonhollow}.
	
	For the improved assertion, the approach is similar, but we use Sperner's theorem on the size of antichains in the boolean lattice to strengthen the argument. The idea is that we do not need to have all of the points $p_1,\ldots,p_k$ in the relative interior of $Q$, but rather just one of them, in order to obtain a contradiction. 
	
	To this end, observe that the family of sets $\{I_1,\ldots,I_{d+1}\}$ is partially ordered by inclusion, and consider the $m$-element subfamily $\{S_1,\ldots,S_m\}$ of all inclusion-minimal elements of $\{I_1,\ldots,I_{d+1}\}$.
	This subfamily consists of pairwise incomparable subsets of~$[k]$	and thus forms an antichain in the boolean lattice of subsets of~$[k]$.
	Sperner's theorem~\cite{sperner1928einsatz} asserts that $m \le \binom{k}{\floor{k/2}}$. As each of the sets $I_1,\ldots,I_{d+1}$ contains one of the inclusion-minimal sets $S_1,\ldots,S_m$ we have
	\[
		[d+1] = \bigcup_{t=1}^m \setcond{j \in [d+1]}{S_t \subseteq I_j}.
	\]
	Consequently, 
	\begin{align*}
		d+1  & = \left| \bigcup_{t=1}^m \setcond{j \in [d+1]}{S_t \subseteq I_j} \right| 
		\\  & \le \ \sum_{t=1}^m \left| \setcond{j \in [d+1]}{S_t \subseteq I_j} \right|
		\\ & \le m \cdot \max_{t \in [m]} \left| \setcond{j \in [d+1]}{S_t \subseteq I_j} \right|.
	\end{align*}
	We have thus shown the existence of an index $t \in [m]$ such that $S_t$ is contained in at least $\ceil{(d+1)/m}$ sets from the list $I_1,\ldots,I_{d+1}$. Without loss of generality, we assume that this holds for the set $S_1$. 
Using the lower bound on $\dim(X)$, we see that 
\[
\frac{d+1}{m} \ge \frac{(k-1) \binom{k}{\floor{k/2}}+1}{\binom{k}{\floor{k/2}}}  > k - 1.
\]
Thus, at least $k$ of the sets from the list $I_1,\ldots,I_{d+1}$ contain $S_1$ as a subset, and we may assume that $S_1 \subseteq I_j$, for $j \in [k]$, and that $S_1 = I_1$.
We can now repeat the above argument, replacing the set $I$ by~$S_1$, in order to find that the point $p_1$ lies in the relative interior of the corresponding relaxation~$Q$ of $H \cap X$ within $H \cap \Z^d$.
%
\end{proof}


Let us derive an explicit lower bound on $\rc(X)$ in terms of $\dim(X)$, which goes to infinity, when $\dim(X) \to \infty$.

\begin{proof}[Proof of Theorem~\ref{thm:rc-vs-dim-intro}]
Recall that we want to prove that 
\[
	\rc(X) > \log_2 (\dim(X)) - \log_2 \log_2 (\dim(X)).
\]
In fact, we will see that this already follows from the weaker statement~\eqref{eqn:weaker-rc-vs-dim-bound} which does not rely on Sperner's Theorem.
So, let $k \geq 2$ be maximal such that $\dim(X) \geq (k-1) 2^k$.
Then, by~\eqref{eqn:weaker-rc-vs-dim-bound} and since $\dim(X) < k 2^{k+1}$, we get
\[
\rc(X) \geq k+1 > \log_2(\dim(X)) - \log_2(k).
\]
The claimed inequality follows, since $k \geq 2$ implies $\dim(X) \geq (k-1) 2^k \geq 2^k$, and thus $\log_2(k) \leq \log_2 \log_2 (\dim(X))$.
\end{proof}

\section{The role of rationality in dimensions \texorpdfstring{$d \leq 4$}{d<=4}}
\label{sect:small-dimensions}

This part is mainly devoted to proving Theorem~\ref{thm:main-low-dims}\ref{d-leq-4}, that is, showing that $\rc(X) = \rc_\Q(X)$, for all at most four-dimensional lattice-convex sets~$X$.
We also see how the developed methods enable us to answer~\ref{q1} affirmatively in these dimensions.
Our main observation is that there is a qualitative difference between low and high dimensions: We show that in dimensions up to four, relaxations of finite sets are necessarily bounded, while in higher dimensions this is not necessarily the case.

The arguments are based on the notion of \emph{maximal lattice-free} sets.
We call a $k$-dimensional closed convex set $L \subseteq \R^d$ such that $\aff(L) \cap \Z^d$ is a $k$-dimensional affine lattice, a $k$-dimensional \emph{lattice-free} set if the relative interior of $L$ does not contain points of $\Z^d$. Further, we call such a set~$L$ a $k$-dimensional \emph{maximal} lattice-free set, if $L$ is not properly contained in another $k$-dimensional lattice-free set.

\begin{proposition}[{}]
	\label{prop:maximizing}
	Every $k$-dimensional lattice-free set is a subset of a maximal $k$-dimensional lattice-free set. 
\end{proposition}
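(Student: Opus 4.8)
The plan is to establish this as a standard maximality argument via Zorn's lemma, applied to the partially ordered set of $k$-dimensional lattice-free sets containing a fixed $k$-dimensional lattice-free set $L$, ordered by inclusion. The only genuine work is verifying that every chain has an upper bound, and the delicate point is ensuring that the upper bound is itself a \emph{$k$-dimensional} lattice-free set in the sense defined above, i.e.\ that the defining condition $\aff(L) \cap \Z^d$ being a $k$-dimensional affine lattice is preserved, and that the relative interior remains free of lattice points.

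First I would fix a $k$-dimensional lattice-free set $L_0$, and note that without loss of generality we may pass to $\aff(L_0)$ and its affine lattice $\aff(L_0) \cap \Z^d$, so that all candidate supersets of $L_0$ within this class share the same affine hull $H := \aff(L_0)$; indeed, any $k$-dimensional lattice-free set $L \supseteq L_0$ must satisfy $\aff(L) = \aff(L_0) = H$ since both are $k$-dimensional affine subspaces and one contains the other. This reduces the problem to a statement purely inside $H$, which we may identify with $\R^k$ carrying the affine lattice $\Lambda := H \cap \Z^d$. Thus I would consider the poset $\mathcal{P}$ of all closed convex lattice-free sets in $H$ that contain $L_0$, ordered by inclusion, and observe that $\mathcal{P}$ is nonempty as $L_0 \in \mathcal{P}$.

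Next I would verify the chain condition. Given a chain $\{L_\alpha\}_{\alpha}$ in $\mathcal{P}$, the natural candidate for an upper bound is the closure of the union, $\widebar{L} := \overline{\bigcup_\alpha L_\alpha}$. This set is closed and convex (the closure of an increasing union of convex sets is convex), contains $L_0$, and has affine hull $H$, so it remains $k$-dimensional. The heart of the matter is showing $\widebar{L}$ is lattice-free, i.e.\ $\relint(\widebar{L})$ contains no point of $\Lambda$. Here I would use the fact that $\relint(\widebar{L}) \subseteq \bigcup_\alpha \relint(L_\alpha) \cup (\text{boundary contributions})$ is not quite immediate, so the careful step is as follows: if some $\lambda \in \Lambda$ lay in $\relint(\widebar{L})$, then by standard convexity a full relative neighborhood of $\lambda$ in $H$ would lie in $\widebar{L}$, and hence — since the $L_\alpha$ exhaust $\widebar{L}$ up to closure and $\lambda$ is an interior point — $\lambda$ would already be a relative interior point of some $L_\alpha$ in the chain, contradicting that $L_\alpha$ is lattice-free.

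The step I expect to be the main obstacle is precisely this last containment argument: controlling whether a lattice point in the relative interior of the limit set $\widebar{L}$ must already appear in the relative interior of one of the chain members $L_\alpha$, since the closure operation can in principle push boundary points of the $L_\alpha$ into the interior of $\widebar{L}$. I would handle this by noting that if $\lambda \in \relint(\widebar{L})$, then a small relatively-open ball $B \subseteq H$ around $\lambda$ satisfies $B \subseteq \widebar{L} = \overline{\bigcup_\alpha L_\alpha}$; picking finitely many points of $\bigcup_\alpha L_\alpha$ whose convex hull is a $k$-simplex containing $\lambda$ in its relative interior, and using that the chain is totally ordered so that all these finitely many points lie in a single $L_\alpha$, one concludes $\lambda \in \relint(L_\alpha)$, the desired contradiction. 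With the chain condition established, Zorn's lemma yields a maximal element of $\mathcal{P}$, which is by definition a maximal $k$-dimensional lattice-free set containing $L_0$, completing the proof.
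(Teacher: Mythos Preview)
Your proposal is correct and follows precisely the Zorn's lemma route that the paper itself names (the paper does not actually spell out the argument, merely remarking that the result ``can be easily derived from Zorn's lemma'' and citing \cite{averkovwagner2012inequ} and \cite{basuconforticornuejolszambelli2010maximal}). Your handling of the one genuinely delicate point---showing that a lattice point in $\relint(\widebar{L})$ already lies in $\relint(L_\alpha)$ for some chain member by trapping it in a $k$-simplex with vertices in a single $L_\alpha$---is the standard and correct way to close that gap.
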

\begin{proof}
	This is well-known and can be easily derived from Zorn's lemma, or by a topological argument~\cite[Prop.~3.1]{averkovwagner2012inequ}; see also Basu et al.~\cite[Cor.~2.2]{basuconforticornuejolszambelli2010maximal} for a more constructive proof.
\end{proof}

The following structural result has been formulated by Lov\'asz in~\cite[Sect.~3]{lovasz1989geometry}. A complete proof can be found in~\cite{averkov2013lovasz} and~\cite{basuconforticornuejolszambelli2010maximal}.

\begin{theorem}
	\label{thm:lovasz}
	Every $d$-dimensional maximal lattice-free set $L$ is a polyhedron. If $L$ is bounded, then~$L$ has at most $2^d$ facets and the relative interior of each facet contains a point of the lattice~$\Z^d$.
	If $L$ is unbounded, then up to unimodular transformations, $L$ is equivalent to $L' \times \R^m$ for some $m \in \{1,\ldots,d-1\}$ and some bounded $(m-d)$-dimensional maximal lattice-free set $L' \subseteq \R^{d-m}$.
\end{theorem}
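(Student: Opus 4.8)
The plan is to split the statement into its three separate assertions---polyhedrality, the facet count and facet property in the bounded case, and the product structure in the unbounded case---and to reduce the unbounded case to the bounded one by analysing the recession cone. Throughout I write $Z = L \cap \Z^d$; since $L$ is lattice-free and full-dimensional, every lattice point of $L$ lies on $\bd(L)$, so $Z \subseteq \bd(L)$. The driving principle in every step is maximality: $L$ cannot be enlarged to a larger full-dimensional lattice-free set, which means that every potential direction of enlargement is ``blocked'' by a lattice point on $\bd(L)$.

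I would first establish simultaneously that $L$ is a polyhedron and that the relative interior of each facet contains a lattice point, since both are governed by the same mechanism. The key claim is that $L$ equals the intersection of the supporting halfspaces of $L$ taken at the points of $Z$. One inclusion is trivial; for the reverse, given $x \notin L$, maximality forces $\conv(L \cup \{x\})$ to contain a lattice point $z$ in its interior. Since $z$ cannot lie in $\inter(L)$ by lattice-freeness, one argues that $z \in Z$ and that a supporting hyperplane of $L$ at $z$ strictly separates $x$, producing the required halfspace excluding $x$. To upgrade this description to genuine polyhedrality I would show that only finitely many of these halfspaces are needed, and that each resulting facet $F$ carries a lattice point in its \emph{relative} interior: if $F$ met $\Z^d$ only along its relative boundary, then pushing the hyperplane supporting $F$ outward by a sufficiently small amount would enlarge $L$ while preserving lattice-freeness, contradicting maximality. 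I expect the finiteness step to be the main obstacle, because the vertices of a maximal lattice-free set need not be lattice points, so $L$ is in general strictly larger than $\conv(Z)$ and polyhedrality cannot simply be read off from the boundary lattice points.

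The bound of $2^d$ facets in the bounded case then follows from a clean parity argument. Choose for each facet $F$ a lattice point $z_F \in \relint(F)$. If two distinct facets $F_1 \neq F_2$ had $z_{F_1} \equiv z_{F_2} \pmod 2$, then the midpoint $\tfrac12(z_{F_1}+z_{F_2})$ would be a lattice point of $L$. Because the minimal face of $L$ containing a relative-interior point of a facet is that facet itself, while no proper face of a polytope contains two distinct facets, the open segment $(z_{F_1},z_{F_2})$ cannot lie in $\bd(L)$ and hence lies in $\inter(L)$. Thus the midpoint would be an interior lattice point, contradicting lattice-freeness. Consequently distinct facets have representatives in distinct residue classes of $(\Z/2\Z)^d$, of which there are exactly $2^d$, giving the bound.

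Finally, for the unbounded case I would analyse $R = \rec(L)$ and show it is a rational linear subspace. Linearity follows from maximality: a purely one-sided recession direction could be completed to a full line without creating an interior lattice point, which would enlarge $L$. Rationality follows from a density argument: if the lineality space $R$ were not rational, the projection of $\Z^d$ to $\R^d / R$ would be dense in the image of $\inter(L)$, forcing an interior lattice point in $L$. Writing $m = \dim R$, a unimodular transformation can be arranged so that $R = \{0\} \times \R^m$; the image $L'$ of $L$ in $\R^{d-m}$ is then a bounded maximal lattice-free set with respect to the projected lattice, and $L = L' \times \R^m$. Applying the bounded case to $L'$ completes the proof. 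The rationality of the lineality space is the delicate point of this last step, and it is here that Lemma~\ref{lem:unb:nonhollow} together with a careful Kronecker-type density argument would be needed.
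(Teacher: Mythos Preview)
The paper does not prove Theorem~\ref{thm:lovasz}; it attributes the statement to Lov\'asz and refers to \cite{averkov2013lovasz} and \cite{basuconforticornuejolszambelli2010maximal} for complete proofs. So there is no ``paper's own proof'' to compare against, only the cited literature.

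Your outline follows the standard route taken in those references: derive polyhedrality from maximality by intersecting supporting halfspaces at boundary lattice points, get the facet property by the ``push the facet outward'' argument, bound the number of facets via the parity/midpoint trick, and handle the unbounded case by showing the recession cone is a rational linear subspace. The midpoint argument for the $2^d$ bound is clean and correct as you wrote it.

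Two places would need real work to turn this into a proof. First, the finiteness step for polyhedrality, which you flag yourself: showing that only finitely many of the supporting halfspaces at boundary lattice points are needed is genuinely the hard part, and the cited proofs spend most of their effort here (in \cite{basuconforticornuejolszambelli2010maximal} it goes through a Dirichlet/compactness argument, in \cite{averkov2013lovasz} through an analysis that is intertwined with the unbounded case). Second, your linearity argument for $\rec(L)$ is too quick: the claim that completing a one-sided recession direction $r$ to a full line ``would not create an interior lattice point'' is exactly what has to be proved, and it is false without further input---one needs to already know that each facet carries a relative-interior lattice point and then argue via translates along~$r$ (this is where Lemma~\ref{lem:unb:nonhollow} and the rationality discussion enter, as you anticipate). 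With those two gaps filled your sketch becomes essentially the proof in the cited sources.
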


The main insight towards the aforementioned results is to show that relaxations of low-dimensional finite lattice-convex sets are always bounded.
We say that a vector $r \in \R^d$ is a \emph{recession vector} of a polyhedron $P \subseteq \R^d$ if $P + r \subseteq P$, the ray $\gamma = \setcond{\lambda \, r}{\lambda \geq 0}$ in direction of~$r$ is called a \emph{recession ray}.
The set of all recession vectors of $P$ is called the \emph{recession cone} $\rec(P)$ of~$P$.

\begin{lemma}
	\label{lem:bounded:relaxation}
	Let $X \subseteq \Z^d$ be a finite lattice-convex set and let one of the following conditions hold:
	\begin{enumerate}[label=\arabic*.]
		\item $d=2$, $\dim(X) \ge 1$,
		\item $d=3$, $\dim(X) \ge 2$, 
		\item $d=4$, $\dim(X) = 4$.
	\end{enumerate}
	Then, every relaxation $P$ of $X$ is bounded.
\end{lemma}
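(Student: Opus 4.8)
The strategy is to argue by contradiction: I would suppose some relaxation $P$ of $X$ is unbounded and show that this forces $X$ to be contained in a proper affine subspace, violating the dimensional hypotheses. The key tool is the structural dichotomy for maximal lattice-free sets in Theorem~\ref{thm:lovasz}. First I would pass to the affine hull $\aff(X)$ so that $X$ is full-dimensional in a space of dimension $e = \dim(X)$; within this affine lattice, $P \cap \aff(X)$ is still a relaxation of $X$, and boundedness of the original $P$ will follow once I control this restriction. So without loss of generality I may assume $\dim(X) = d$ and reduce to proving: if $d \in \{2,3,4\}$ and $X$ is full-dimensional, then every relaxation is bounded.

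The heart of the argument is to analyze the recession cone $\rec(P)$. Suppose $P$ is unbounded, so $\rec(P)$ contains a nonzero ray. The plan is to find a lattice-free set of large dimension whose relative interior avoids $\Z^d$, and then invoke Theorem~\ref{thm:lovasz} to derive a contradiction with the smallness of $d$. Concretely, since $X$ is finite and $P \supseteq X$ with $P \cap \Z^d = X$, the polyhedron $P$ contains no lattice points outside the finite set $X$; in particular, for any recession ray $\gamma$ of $P$, the translates $\conv(X) + \gamma$ lie in $P$ and sweep out an unbounded region whose interior lattice points would all have to belong to $X$. I would use this together with Lemma~\ref{lem:unb:nonhollow} to argue that $P$ cannot contain a lattice point in its interior, so $P$ is a full-dimensional \emph{lattice-free} set. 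By Proposition~\ref{prop:maximizing}, $P$ extends to a maximal $d$-dimensional lattice-free set $L$, which by Theorem~\ref{thm:lovasz} is unbounded and hence unimodularly equivalent to $L' \times \R^m$ with $m \geq 1$ and $L'$ a bounded maximal lattice-free set in $\R^{d-m}$.

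Now the dimensional bookkeeping closes the argument. Because $X \subseteq L \cong L' \times \R^m$ is finite, $X$ must project into a single lattice fiber over the $\R^m$ factor; more precisely the $\R^m$-direction is a lattice subspace of $\rec(L)$, and a finite lattice-convex set contained in a set of the form $L' \times \R^m$ can only occupy bounded extent in the $\R^m$ direction while remaining lattice-convex, forcing $X$ to lie in an affine subspace of dimension $\dim(L') = d - m \leq d-1$. This contradicts $\dim(X) = d$. The three cases in the lemma correspond exactly to the situations where full-dimensionality is available: in the stated dimensions the only way to fail boundedness would require $\dim(X) < d$, which is excluded by the hypotheses ($\dim(X) \geq 1$ for $d=2$, $\dim(X) \geq 2$ for $d=3$, $\dim(X) = 4$ for $d=4$).

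The main obstacle I anticipate is the passage from ``$P$ is lattice-free'' to the contradiction, specifically verifying cleanly that a finite lattice-convex $X$ cannot be full-dimensional inside an unbounded maximal lattice-free set $L' \times \R^m$. The subtlety is that $X$ could in principle have points spread along the recession direction; I would need to argue that lattice-convexity plus finiteness of $X$ prevents this, using that the recession subspace $\R^m$ is rational (a lattice direction) and that any two lattice points of $X$ differing in the $\R^m$-direction would force infinitely many intermediate lattice points inside $\conv(X) \subseteq L$, contradicting either finiteness of $X$ or lattice-freeness of $L$. Getting the reduction to $\aff(X)$ to interact correctly with the recession structure — so that an unbounded recession direction of $P$ actually survives as an unbounded direction \emph{within} $\aff(X)$ — is the delicate point, and is presumably where the precise dimension thresholds $d \leq 4$ enter through the bound $m \geq 1$ in Theorem~\ref{thm:lovasz}.
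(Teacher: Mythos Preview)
Your plan has the right overall shape --- pass to a maximal lattice-free set via Proposition~\ref{prop:maximizing} and invoke Theorem~\ref{thm:lovasz} --- but there are two genuine gaps.

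\textbf{Gap 1: the reduction to $\aff(X)$ is invalid.} You claim that boundedness of $P$ follows once $P \cap \aff(X)$ is controlled. This is false: when $\dim(X) < d$ (the cases $d=2,\dim(X)=1$ and $d=3,\dim(X)=2$ are explicitly part of the lemma), the relaxation $P$ may well be unbounded in a direction transverse to $\aff(X)$ while $P \cap \aff(X)$ is bounded. The paper does \emph{not} reduce to $\aff(X)$; it works directly in $\R^d$, picking $\dim(X)+1$ affinely independent points of $X$ and adding the recession ray~$\gamma$ to their convex hull.

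\textbf{Gap 2: the claim that $X$ cannot be full-dimensional inside $L' \times \R^m$ is false.} Take $d=2$, $L = [0,1]\times\R$, and $X = \{(0,0),(1,0),(0,1)\}$: this is finite, lattice-convex, full-dimensional, and sits inside $L$. Your proposed fix --- that two points of $X$ differing in the $\R^m$-direction would force infinitely many intermediate lattice points --- does not work: a segment between two lattice points contains only finitely many lattice points, and lattice-convexity simply places them in~$X$. The actual mechanism is different and uses $\rec(P)$ rather than the containment $X \subseteq L$. When $m=1$, the recession cone $\rec(L)$ is a rational \emph{line}, so $\gamma \in \rec(P) \subseteq \rec(L)$ is a rational direction, and then for any $a \in X$ the ray $a + \R_{\ge 0}\gamma \subseteq P$ contains infinitely many lattice points --- contradicting $|P \cap \Z^d| = |X| < \infty$. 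When $m \ge 2$, however, $\gamma$ need not be rational, and the paper instead argues inductively: enough of the chosen affinely independent points of $X$ must lie on a single facet hyperplane $H$ of $L$, and one reduces to the previously established lower-dimensional case for $H \cap X$ inside $H \cap \Z^d$ (for $d=4$, $m=2$ this uses the explicit classification of two-dimensional maximal lattice-free sets). This inductive descent through the cases is the real content of the proof, and it is entirely absent from your plan.
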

\begin{proof}
	Let the polyhedron $P$ be a relaxation of $X$, which means $P \cap \Z^d = X$. We assume to the contrary that $P$ is unbounded, that is, by the basic theory of convexity there is a recession ray $\gamma$ of~$P$, with recession vector $r \in \rec(P) \setminus \{0\}$ (cf.~\cite{schrijver1986theory}).
	
	\emph{Case~1: $d=2, \dim(X) \ge 1$.} We consider two distinct points $a,b$ of $X$. If the ray $\gamma$ is parallel to $[a,b]$, say $r = b-a$, then  $a + k (b-a)$, with $k \in \Z_{\ge 0}$, are infinitely many lattice points that are contained in $P$, which is a contradiction. If $\gamma$ is not parallel to $[a,b]$, consider the two-dimensional set $[a,b]+\gamma \subseteq P$. If $[a,b]+\gamma$ is not lattice-free, then Lemma~\ref{lem:unb:nonhollow} yields that $[a,b]+\gamma$ contains infinitely many interior lattice points, which contradicts $P \cap \Z^2 = X$. If $[a,b]+\gamma$ is lattice-free, then by Proposition~\ref{prop:maximizing} there is a two-dimensional maximal lattice-free set $L$ containing $[a,b]+\gamma$. In view of Theorem~\ref{thm:lovasz}, $L$ is unimodularly equivalent to $[0,1] \times \R$. Thus, the recession cone of $L$ is a rational line. On the other hand, the recession cone of $P$ contains $\gamma$ as a subset. Hence, the ray $\gamma$ has a rational direction. It follows that $a + \gamma$ contains infinitely many points of $\Z^2$, which again contradicts $P \cap \Z^2 = X$. 
	
	\emph{Case~2: $d=3, \dim(X) \ge 2$.} We consider three affinely independent points $a,b,c \in X$ and the triangle $T= \conv(\{a,b,c\})$. If $\gamma$ is parallel to the plane affinely spanned by $a,b,c$, then using Case~1 for the subset $\aff(\{a,b,c\}) \cap X$ of the two-dimensional affine lattice $\aff(\{a,b,c\}) \cap \Z^3$, we arrive at a contradiction. Otherwise, $T + \gamma$ is a three-dimensional subset of $P$. If $T + \gamma$ is not lattice-free, then Lemma~\ref{lem:unb:nonhollow} yields that $T + \gamma$ contains infinitely many lattice points, which is a contradiction. If $T + \gamma$ is lattice-free, then let $L$ be a three-dimensional maximal lattice-free set containing $T + \gamma$. By Theorem~\ref{thm:lovasz}, $L$ is unimodularly equivalent to a set of the form $L' \times \R^m$, where $m \in \{1,2\}$ and $L'$ is a bounded $(3-m)$-dimensional maximal lattice-free set.
	
	If $m=1$, then the recession cone of $L$ is a rational line and so $\gamma$, being a subset of the recession cone of $P$ is a rational ray. This shows that $a + \gamma$ contains infinitely many points of $\Z^3$, again a contradiction.
	
	If $m=2$, then the boundary of $L$ is a union of two parallel planes and one of these planes, which we denote by $H$, contains at least two of the points~$a,b,c$. The ray $\gamma$ is parallel to $H$, and $H \cap X$ is a lattice-convex set of dimension at least one in the affine lattice $H \cap \Z^3$. Applying the assertion of Case~1 to the set $H \cap X$ in $H \cap \Z^3$, yields the desired contradiction. 
	
	\emph{Case~3: $d=4, \dim(X) = 4$.} We pick five affinely independent points $p_0,\ldots,p_4$ in $X$ and consider the simplex $S = \conv(\{p_0,\ldots,p_4\})$. Clearly, $S + \gamma$ is $4$-dimensional. If $S + \gamma$ is not lattice-free, we get a contradiction just as in the previous cases. If $S+ \gamma$ is lattice-free, again analogously to the previous cases, we find a maximal $4$-dimensional lattice-free set $L \supseteq S + \gamma$. This set is unimodularly equivalent to $L' \times \R^m$, where $m \in \{1,2,3\}$ and $L'$ is a bounded $(4-m)$-dimensional maximal lattice-free set. 
Without loss of generality, we assume that $L = L' \times \R^m$.
		
	If $m=1$, then the recession cone of $L$ is a rational line, and so $\gamma$ is a ray in a rational direction. In this case, $p_0+\gamma$ contains infinitely many points of $\Z^4$, which is a contradiction. 
	
	If $m=2$, then $L'$ is a bounded two-dimensional maximal lattice-free set. Consider the projection map $\pi : \R^4 \to \R^2$, $\pi(x_1,x_2,x_3,x_4) = (x_1,x_2)$. If two of the points $\pi(p_0),\ldots,\pi(p_4)$ coincide, say $q = \pi(p_0) = \pi(p_1)$, then the fiber $H:=\pi^{-1}(q)$ is a two-dimensional affine space containing the points $p_0$ and $p_1$. Thus, we can use Case~1 for the set $X \cap H$ of dimension at least one in the two-dimensional affine lattice $H \cap \Z^4$ to arrive at a contradiction. Thus, we can assume that the five points $q_i = \pi(p_i)$, with $i \in \{0,1,\ldots,4\}$, are pairwise distinct lattice points in~$L'$. 
	
	Maximal lattice-free sets in dimension two are completely classified (see~\cite[Thm.~2]{averkovkruempelmannweltge2017notions}). The classification restricts~$L'$ as follows:
	\begin{enumerate}[label=(\roman*)]
		\item $L'$ is a triangle or a quadrilateral.
		\item If $L'$ is a quadrilateral, then $L'$ contains exactly four lattice points.
		\item If $L'$ is a triangle, then all but two lattice points of $L'$ are contained in the same edge, or $L'$ is unimodularly equivalent to $\conv(\{0, 2 e_1, 2 e_2\})$.
	\end{enumerate}
	Since $q_0,\ldots,q_4$ are five distinct points, $L'$ cannot be a quadrilateral. Thus, $L'$ is a triangle and we conclude that three of these five points, say $q_0,q_1,q_2$, lie in the same edge of $L'$. Then $p_0,p_1,p_2$ lie in the same facet of $L$. Let us denote by $H$ the hyperplane in~$\R^4$ spanned by the facet of $L$ that contains $p_0,p_1,p_2$. The set $H \cap X$ is a lattice-convex set of dimension at least two in the three-dimensional affine lattice $H \cap \Z^4$. Thus, we can use assertion of Case~2 to arrive at a contradiction. 

		If $m=3$, then the boundary of $L$ is a union of two parallel planes and one of these planes, which we denote by $H$, contains at least three of the affinely independent points~$p_0,\ldots,p_4$. The ray $\gamma$ is parallel to $H$, and $H \cap X$ is a lattice-convex set of dimension at least two in the three-dimensional affine lattice $H \cap \Z^4$. Applying the assertion of Case~2 to the set $H \cap X$ in $H \cap \Z^4$, yields the desired contradiction. 
\end{proof}

Lemma~\ref{lem:bounded:relaxation} is naturally constrained to small dimensions.
The following five-dimensional example with an unbounded relaxation is taken from Kaibel \& Weltge~\cite[Ex.~1]{kaibelweltge2015lowerbounds} (cf.~\cite[Sect.~7.3]{weltge2015diss}).

\begin{example}
\label{ex:five-dim-simplex}
Let $X = \{0,e_1,e_2,e_3,e_1+e_3+e_4,e_2+e_3+e_5\} \subseteq \Z^5$ and let $r = (0,0,0,1,\sqrt{2})^\intercal$.
Then, the unbounded polyhedron $\conv(X) + \R r$ is a relaxation of $X$.
\end{example}

Moreover, this example can be used to construct arbitrarily large lattice-convex sets in every dimension $d \geq 6$ admitting an unbounded relaxation.
Indeed, if $X \subseteq \Z^d$ is a finite lattice-convex set, and $u \in \R^d \setminus \{0\}$ is a direction such that $(\conv(X) + \R u) \cap \Z^d = X$, then for every $k \in \N$, the cartesian product $\tilde X = X \times \{0,1,\ldots,k\}\subseteq \Z^{d+1}$ is a lattice-convex set admitting the unbounded relaxation $\conv(\tilde X) + \R \binom{u}{0}$.

Based on Lemma~\ref{lem:bounded:relaxation}, we can now show that rationality does not play a distinguished role in dimensions at most four, and thus prove Theorem~\ref{thm:main-low-dims}\ref{d-leq-4}.

\begin{theorem}
	\label{thm:rc:gen:vs:rat:d<=4}
	Let $d \le 4$. Then, $\rc(X) = \rc_\Q(X)$ for every finite lattice-convex set $X \subseteq \Z^d$. 
\end{theorem}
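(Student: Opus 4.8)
The plan is to prove the two inequalities $\rc(X)\le\rc_\Q(X)$ and $\rc_\Q(X)\le\rc(X)$ separately. The first is immediate: every rational relaxation is in particular a relaxation, so the minimum defining $\rc(X)$ is taken over a larger family and cannot exceed $\rc_\Q(X)$. All the work is in the reverse inequality, and the strategy follows the sketch in the introduction — take an optimal real relaxation, argue it is bounded, and then rationalize its facets by moving them slightly outward while perturbing the normals to rational directions. Before doing this I would first reduce to the \emph{full-dimensional} case $\dim(X)=d$. After an affine unimodular transformation we may assume $\aff(X)=\R^m\times\{0\}^{d-m}$ with $m=\dim(X)$ and $\aff(X)\cap\Z^d=\Z^m\times\{0\}^{d-m}$; such a transformation preserves lattice-convexity as well as both $\rc$ and $\rc_\Q$. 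Using equality constraints to pin down the last $d-m$ coordinates (these do not count towards the number of inequalities), one checks routinely that $\rc(X)$ and $\rc_\Q(X)$ computed in $\Z^d$ agree with the corresponding quantities for the full-dimensional copy $X'\subseteq\Z^m$. This reduction is genuinely needed, since for low-dimensional $X$ sitting in $\Z^4$ a relaxation may well be unbounded; once we are full-dimensional, the cases $d\in\{2,3,4\}$ all satisfy the hypotheses of Lemma~\ref{lem:bounded:relaxation} (the case $d\le1$ being trivial, as an interval of consecutive integers is cut out by two rational inequalities).

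So assume $X$ is full-dimensional and fix an optimal relaxation $P=\{x:a_i^\intercal x\le\beta_i,\ i\in[k]\}$ with $k=\rc(X)$ facets. By Lemma~\ref{lem:bounded:relaxation}, $P$ is bounded, hence a polytope, and $P\cap\Z^d=X$ is finite. Since $P$ is compact and $\Z^d\setminus X$ is disjoint from $P$ with only finitely many of its points lying within any bounded distance of $P$, the quantity $\rho_0:=\operatorname{dist}(P,\Z^d\setminus X)$ is strictly positive. This positive gap is the key quantitative input that boundedness buys us.

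Now I would choose rational vectors $\tilde a_i$ and rational numbers $\tilde\beta_i$ with $\|\tilde a_i-a_i\|$ and $|\tilde\beta_i-\beta_i|$ small, and with each $\tilde\beta_i$ slightly larger than $\beta_i$ so that every facet is moved outward. The goal is that the rational polyhedron $\tilde P=\{x:\tilde a_i^\intercal x\le\tilde\beta_i\}$ satisfy two properties: (i) $X\subseteq\tilde P$, and (ii) $\tilde P\subseteq P+\tfrac{\rho_0}{2}\B$, where $\B$ is the Euclidean unit ball. For (i): for each $p\in X$ we have $a_i^\intercal p\le\beta_i$, and the outward shift $\tilde\beta_i-\beta_i$ absorbs the perturbation error $(\tilde a_i-a_i)^\intercal p$, provided the normal perturbation is small relative to the shift; since $X$ is finite this is arranged uniformly. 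Granting (ii), any lattice point $z\in\tilde P$ lies within distance $\rho_0/2<\rho_0$ of $P$, hence $z\notin\Z^d\setminus X$, i.e. $z\in X$; together with (i) this yields $\tilde P\cap\Z^d=X$. Thus $\tilde P$ is a rational relaxation of $X$ described by at most $k$ inequalities, giving $\rc_\Q(X)\le k=\rc(X)$ and finishing the proof.

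The technical heart — and the step I expect to be the main obstacle — is property (ii): I must guarantee that perturbing the normals does not cause $\tilde P$ to balloon outward beyond the controlled neighborhood $P+\tfrac{\rho_0}{2}\B$. This is precisely where the boundedness granted by Lemma~\ref{lem:bounded:relaxation} is indispensable. The relevant fact is the standard continuity of a nonempty bounded polyhedron with respect to its defining inequalities: boundedness of $P$ forces the normals $a_1,\dots,a_k$ to positively span $\R^d$, which is an open condition stable under small perturbations, so $\tilde P$ stays bounded and its support function converges uniformly to that of $P$, yielding $\tilde P\subseteq P+\tfrac{\rho_0}{2}\B$ once the perturbation is small enough. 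Verifying this stability is the only genuinely delicate point; the density (rationalization) step itself is routine, since all the conditions above are strict and therefore open in the choice of $(\tilde a_i,\tilde\beta_i)$. This also explains why the result is confined to dimensions in which every relaxation is bounded.
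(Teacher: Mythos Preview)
Your proposal is correct and follows essentially the same approach as the paper: reduce to the full-dimensional case, invoke Lemma~\ref{lem:bounded:relaxation} for boundedness, and then rationalize an optimal relaxation. The paper's rationalization step is slightly slicker than yours --- rather than appealing to stability of bounded polytopes under perturbation, it first enlarges $P$ so that $X$ lies in its interior, then strictly separates $X$ from each (compact) facet $F_i$ by a rational hyperplane $H_i$, obtaining a rational polyhedron $R=\bigcap_i H_i^+$ that is automatically contained in $P$ (any point outside $P$ lies beyond some facet $F_i$ along a ray from $X$, hence on the wrong side of $H_i$); this sidesteps the continuity argument you correctly identified as the main technical obstacle.
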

\begin{proof}
	For computing $\rc(X)$ and $\rc_\Q(X)$, we can pass to the affine lattice $\aff(X) \cap \Z^d$. Thus, without loss of generality we can assume that $X$ is a $d$-dimensional lattice-convex set in~$\Z^d$. The inequality $\rc(X) \le \rc_\Q(X)$ is trivial and so we need to show $\rc_\Q(X) \le \rc(X)$. 
	
	Lemma~\ref{lem:bounded:relaxation} implies that, under our assumptions, every relaxation $P$ of $X$ is bounded. Choose a relaxation $P$ with $k:=\rc(X)$ facets. It suffices to prove the existence of a rational relaxation with at most~$k$ facets. First note that by slightly increasing the right hand sides of the inequality description of $P$, we can assume that $X$ is contained in the interior of~$P$. Let $F_1,\ldots,F_k$ be the facets of $P$. Each of these facets is disjoint with $X$ and so, for each $i \in \{1,\ldots,k\}$, there is a hyperplane $H_i$ that separates $X$ from $F_i$, which means that $H_i$ determines halfspaces $H_i^+$ and $H_i^-$ such that $X$ lies in the interior of $H_i^+$ and $F_i$ lies in the interior of $H_i^-$. Since~$P$ is bounded, $H_i$ can be chosen to be a rational hyperplane. It follows that $R := \bigcap_{i=1}^k H_i^+$ is a rational relaxation of $X$ satisfying $R \subseteq P$ and having at most $k$ facets. This shows $\rc_\Q(X) \le \rc(X)$. 
\end{proof}

\begin{remark}
The proof of Theorem~\ref{thm:rc:gen:vs:rat:d<=4} works for every $X \subseteq \Z^d$ such that every of its relaxations is bounded, independently of the dimension~$d$.
\end{remark}

For the sake of a discussion of~\ref{q1} in small dimensions, observe that by Theorem~\ref{thm:dim:dependence} a lattice-convex set $X \subseteq \Z^d$ is guaranteed to satisfy $\rc(X) \geq k+1$, for $k=3$ and $k=4$, only if $\dim(X) \geq 6$ and $\dim(X) \geq 18$, respectively.
We solve~\ref{q1} in these cases, by providing optimal bounds.

\begin{corollary}
	\label{cor:rc>=d+1:d<=4}
	Let $d \le 4$.
	Then, $\rc(X) \ge d+1$ holds for every $d$-dimensional finite lattice-convex set $X \subseteq \Z^d$.
\end{corollary}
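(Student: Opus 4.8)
The plan is to combine the boundedness of low-dimensional relaxations, already isolated in Lemma~\ref{lem:bounded:relaxation}, with the elementary fact that a full-dimensional bounded polyhedron in $\R^d$ cannot be described by fewer than $d+1$ halfspaces. First I would fix a $d$-dimensional finite lattice-convex set $X \subseteq \Z^d$ with $d \le 4$ and choose a relaxation $P$ realizing the minimum, i.e.\ given by $k = \rc(X)$ inequalities $a_1^\intercal x \le \beta_1, \ldots, a_k^\intercal x \le \beta_k$. Since $X \subseteq P$ and $X$ is $d$-dimensional, the affine hull of $P$ is all of $\R^d$, so $P$ is a full-dimensional polyhedron.

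The next step is boundedness. For $d \in \{2,3,4\}$ we have $\dim(X) = d$, so the corresponding case of Lemma~\ref{lem:bounded:relaxation} applies verbatim and guarantees that $P$ is bounded. The remaining case $d = 1$ (and the degenerate $d=0$) is not covered by that lemma, but it is immediate: a one-dimensional unbounded relaxation is a halfline and therefore contains infinitely many integer points, contradicting the finiteness of $X$; hence every relaxation is bounded in this case as well.

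With $P$ now a full-dimensional \emph{bounded} polyhedron, I would invoke the following standard observation to count facets: a full-dimensional bounded polyhedron in $\R^d$ requires at least $d+1$ defining halfspaces. Indeed, boundedness means the recession cone $\setcond{r \in \R^d}{a_i^\intercal r \le 0 \text{ for all } i \in [k]}$ is trivial. But if $k \le d$, the normals $a_1,\ldots,a_k$ cannot positively span $\R^d$ — a positively spanning set of $\R^d$ has at least $d+1$ elements — so there exists a nonzero $r$ with $a_i^\intercal r \le 0$ for all $i$, which would be a recession direction and contradict boundedness. Consequently $k \ge d+1$, that is, $\rc(X) \ge d+1$.

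I do not expect a genuine obstacle here: the argument is a direct corollary of the machinery already built up, and the only points requiring care are (i) checking that boundedness is available in every relevant dimension, which is precisely why the separate treatment of $d=1$ is needed, and (ii) phrasing the facet-count fact cleanly via the positive-spanning remark so as not to appeal to more than the recession cone. As an alternative route one could instead use Theorem~\ref{thm:rc:gen:vs:rat:d<=4} together with the observation from the introduction that rational relaxations are bounded within $\aff(X)$, giving $\rc(X) = \rc_\Q(X) \ge \dim(X)+1 = d+1$; the route through Lemma~\ref{lem:bounded:relaxation} is preferable, however, since it avoids rationality altogether. Finally, the bound is optimal, as witnessed by the lattice simplex $\Delta_d = \{0, e_1, \ldots, e_d\}$, whose convex hull has exactly $d+1$ facets.
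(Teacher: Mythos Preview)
Your argument is correct and follows exactly the paper's route: invoke Lemma~\ref{lem:bounded:relaxation} to get boundedness of any relaxation and then use that a full-dimensional bounded polyhedron in $\R^d$ has at least $d+1$ facets. In fact you are slightly more careful than the paper, which glosses over the case $d=1$ not covered by Lemma~\ref{lem:bounded:relaxation}; your separate one-line treatment of that case and your explicit justification of the facet count via the recession cone are welcome additions.
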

\begin{proof}
	By Lemma~\ref{lem:bounded:relaxation}, every relaxation of $X$ is bounded. Every bounded $d$-dimensional polyhedron has at least $d+1$ facets. This gives $\rc(X) \ge d+1$.
\end{proof}

As a further consequence, we get that $\rc(\Delta_d)=d+1$, for every $d \leq 4$, where $\Delta_d = \{0,e_1,\ldots,e_d\}$ is the set of lattice points of the standard simplex.
Weltge~\cite[Prob.~11]{weltge2015diss} (cf.~\cite{kaibelweltge2015lowerbounds}) conjectures that this identity holds in arbitrary dimension.
However, even for this particular case we need to develop new tools, because the simplex in Example~\ref{ex:five-dim-simplex} is a unimodular image of $\conv(\Delta_5)$, so in particular there are unbounded relaxations of~$\Delta_5$.

\section{Conditions on a lattice-convex set to have finitely many observers}
\label{sect:non-hollow}
This section is concerned with the proof of Theorem~\ref{thm:main-conds}.
Our argument is split up into two main parts:
First, we study the set of so-called observers of a lattice-convex set~$X \subseteq \Z^d$, which is a subset of the lattice points outside of~$X$ that is of course necessary, but more importantly, also sufficient to be separated by the minimal number of inequalities.
By applying techniques from the Geometry of Numbers we find that the set of observers of~$X$ is finite if (a) $X$ is parity-complete, (b) $\conv(X)$ is not lattice-free, or (c) the lattice-width of $X$ is not too small.
We introduce these notions below.

In the second part, we explain how mixed-integer linear programming (MILP) can be used to compute the minimal number of inequalities that are needed to separate~$X$ from a \emph{finite} subset $Y \subseteq \Z^d \setminus X$.
We moreover argue that for the separation problem for such finite sets $X$ and~$Y$, there is no loss of generality to restrict to \emph{rational} linear descriptions.

\subsection{The set of observers of a lattice-convex set}
\label{sect:observers}

\begin{definition}
\label{def:observers}
Let $X \subseteq \Z^d$ be a finite lattice-convex set.
We say that a point $y \in \Z^d \setminus X$ \emph{observes} $X$ if $\conv(X \cup \{y\}) \cap \Z^d = X \cup \{y\}$, that is, $X \cup \{y\}$ is lattice-convex as well.
Write
\[
\obs(X) := \setcond{y \in \Z^d \setminus X}{y\text{ observes }X}
\]
for the set of points that observe $X$.
\end{definition}
\begin{figure}[ht]
\hfill\includegraphics[scale=.8,page=1]{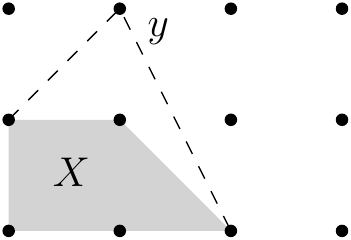}
\hfill\includegraphics[scale=.8,page=2]{observer-example.pdf}
\hfill\,
\caption{A lattice-convex set $X$ with an observer $y$ and a non-observer $z$.}
\label{fig:observer-example}
\end{figure}
Our notion of observers is inspired by Weltge's definition of a \emph{guard set for~$X$}, which is a set $G \subseteq \Z^d \setminus X$ with the property that for every $p \in \Z^d \setminus X$ we have $G \cap \conv(X \cup \{p\}) \neq \emptyset$.
Indeed, every guard set~$G$ contains $\obs(X)$, so that the set of observers is the smallest guard set with respect to inclusion.
Weltge proved that for two-dimensional lattice-convex sets $X \subseteq \Z^2$ there is always a finite guard set, and thus in particular there are only finitely many observers.

\begin{proposition}[{Weltge~\cite[Prop.~7.5.6 \& Thm.~7.5.7]{weltge2015diss}}]\label{prop:finite-observers-dim-2}
If $X \subseteq \Z^2$ is a full-dimensional finite lattice-convex set, then $\obs(X)$ is finite and can be computed algorithmically.
\end{proposition}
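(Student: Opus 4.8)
The plan is to reduce the entire statement to a single quantitative claim: that there is a computable radius $R = R(X)$ with $\obs(X) \subseteq \{y \in \Z^2 : \|y\| \le R\}$. Granting such a bound, $\obs(X)$ is automatically finite, and it can be computed by enumerating the finitely many lattice points in the box $\|y\|\le R$ and testing each candidate $y$ for the defining property $\conv(X\cup\{y\})\cap\Z^2 = X \cup\{y\}$; this test is itself a finite computation, since $\conv(X\cup\{y\})$ is a bounded rational polygon and one only inspects its finitely many lattice points. So everything comes down to bounding how far an observer can be from $K := \conv(X)$, which is a full-dimensional lattice polygon because $X$ is full-dimensional.

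The first, easy fact I would record is a necessary condition for being an observer. If $y \in \obs(X)$ and $a$ is a vertex of $K$ at which a supporting line through $y$ touches $K$ (a \emph{tangent vertex}), then the relative interior of the segment $[y,a]$ lies on the boundary of $\conv(X\cup\{y\})$ and entirely outside $K$. Hence any lattice point in the relative interior of $[y,a]$ would be a point of $\conv(X\cup\{y\})\cap\Z^2$ distinct from $y$ and lying outside $K$, contradicting $y\in\obs(X)$. Thus both tangent segments $[y,a]$ and $[y,b]$ (for the two tangent vertices $a,b$) are \emph{primitive} lattice segments. This primitivity is the structural fact that ultimately prevents observers from escaping to infinity.

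For the bound I would argue by contradiction via an escaping sequence. Suppose $\obs(X)$ were infinite; choose observers $y_n\to\infty$ with $y_n/\|y_n\|\to u$, and — since the tangent vertices range over the finite vertex set of $K$ — pass to a subsequence along which they are constant, say $a$ and $b$. As $K$ is two-dimensional, $a,b$ are the vertices extremal in the directions $\pm u^\perp$, so $a\neq b$ and $w := b-a$ is not parallel to $u$. Consider the unbounded half-strip $\Sigma := \{a + sw + tu : s\in[0,1],\ t\ge 0\}$. Any fixed point of the interior of $\Sigma$ lies in the triangle $\conv\{a,b,y_n\}\subseteq\conv(X\cup\{y_n\})$ for all large $n$. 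In the generic case that $\Sigma$ contains a lattice point in its interior, Lemma~\ref{lem:unb:nonhollow} yields infinitely many interior lattice points of $\Sigma$; all but finitely many lie outside the bounded set $K$, and any such point $z$ lies in $\conv\{a,b,y_n\}$ for large $n$. Then $z$ is a new lattice point, distinct from $y_n$ and outside $K$, contradicting $y_n\in\obs(X)$.

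The remaining, genuinely delicate case is that $\Sigma$ contains \emph{no} interior lattice point. Inspecting the lattice $\{sw + tu \in \Z^2\}$ shows that this forces $u$ to be rational, say $u = u_0/\|u_0\|$ with $u_0\in\Z^2$ primitive, and $\{w,u_0\}$ to be a basis of $\Z^2$; in the resulting coordinates $\xi w + \eta u_0$ (with origin at $a$) the polygon $K$ lies in the slab $0\le\xi\le 1$ and $b$ is the point $(1,0)$. Writing $y_n=(p_n,q_n)$ with $q_n\to\infty$ and $p_n=o(q_n)$, primitivity of the two tangent segments becomes $\gcd(p_n,q_n)=\gcd(p_n-1,q_n)=1$, which already excludes $p_n\in\{0,1\}$ for large $n$. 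Once $q_n>|p_n|+1$, an elementary computation places one of the two boundary-ray lattice points $a+u_0=(0,1)$ or $b+u_0=(1,1)$ strictly inside $\conv(X\cup\{y_n\})$ and outside $K$, again a contradiction. I expect this thin case — including the bookkeeping needed to turn the thresholds into an explicit computable bound $R(X)$, and the degenerate subcase where $K$ has an edge parallel to $u_0$ (so that a boundary-ray point might accidentally lie in $K$) — to be the main obstacle; by contrast, the generic case and the reductions above are comparatively routine.
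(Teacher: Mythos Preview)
The paper does not prove this proposition; it is quoted from Weltge's thesis and stated without argument, so there is no in-paper proof to compare against. I can therefore only assess your proposal on its own terms.

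There is a structural mismatch between your plan and your execution. You announce that everything reduces to exhibiting a \emph{computable} radius $R(X)$ and then enumerate; but what you actually run is a contradiction argument via an escaping sequence $y_n\to\infty$. Sequential compactness proves finiteness, not computability: nothing in the argument hands you an explicit $R$, and you say so yourself (``the bookkeeping needed to turn the thresholds into an explicit computable bound''). This is more than bookkeeping. In your thin case the direction $u$ and the coordinate system arise as limits along a subsequence; to extract an explicit bound you would have to quantify over all possible thin directions in advance, which your argument does not do. One way to decouple the two halves of the statement is to appeal to Theorem~\ref{thm:finite-obs-computability} for computability once finiteness is established, but that result appears later in the paper and itself relies on the present proposition in some branches of Section~\ref{sect:d=3}, so you would need to check that no circularity is introduced.

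The second gap you flag is also genuine and not a corner case. In the degenerate thin subcase where $K$ has an edge parallel to $u_0$, your candidate witnesses $(0,1)$ and $(1,1)$ can both lie in $K$ --- rectangles $[0,1]\times[0,m]$ are already examples --- and then your contradiction evaporates. Moreover, in that same configuration the ``tangent vertices'' $a,b$ are not uniquely determined (each tangent line meets $K$ in an edge), and your primitivity argument for $[y,a]$ only works if $a$ is chosen as the vertex on the tangent line \emph{nearest} to $y$; this should be stated explicitly. Until the effectivity issue and the degenerate subcase are resolved, the proposal is a plausible outline of why planar observers cannot escape to infinity, but not yet a proof of the proposition as stated.
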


However, if $X \subseteq \Z^d$ is such that $\dim(X) < d$, then $\card{\obs(X)} = \infty$.
Indeed, every lattice point in a neighboring lattice plane to $\aff(X)$ is an observer of~$X$.
Even more, in dimensions $d \geq 3$, there are full-dimensional lattice-convex sets that have infinitely many observers.
One example is the set $\Delta_d = \{0,e_1,\ldots,e_d\}$ of lattice points of the standard simplex.

For the sake of convenient notation, we extend the definition of the relaxation complexity as follows:
For $X \subseteq Y \subseteq \Z^d$, the minimal $k$ such that $X$ can be separated from~$Y \setminus X$ by~$k$ linear inequalities (resp. rational linear inequalities) is denoted by $\rc(X,Y)$ (resp.~$\rc_\Q(X,Y)$).
So, in particular $\rc(X) = \rc(X,\Z^d)$ and $\rc_\Q(X)=\rc_\Q(X,\Z^d)$.

The utility of the concept of observers stems from the fact that for a polyhedron $P$ to be a relaxation of~$X$, it suffices that $P$ separates~$X$ from $\obs(X)$.
This follows directly from the definition of~$\obs(X)$.

\begin{proposition}
	\label{prop:obs-observations:XuObs}
	Let $X \subseteq \Z^d$ be a finite lattice-convex set and let $A x \le b$ be a system of linear inequalities.
	The following conditions are equivalent: 
	\begin{enumerate}[label=(\roman*)]
		\item The system $A x \le b$ separates $X$ from $\Z^d \setminus X$.
		\item The system $A x \le b$ separates $X$ from $\obs(X)$. 
	\end{enumerate}
In particular,
\[
\rc(X) = \rc(X, X \cup \obs(X)) \quad \text{and} \quad \rc_\Q(X) = \rc_\Q(X, X \cup \obs(X)).
\]
\end{proposition}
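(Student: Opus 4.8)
The plan is to prove the equivalence of (i) and (ii) and to observe that the two displayed identities are then immediate: a system $Ax \le b$ separates $X$ from $\Z^d \setminus X$ if and only if it separates $X$ from $\obs(X)$, so the feasible systems for the two separation problems coincide, and hence so do the minimal cardinalities. Since this equivalence is insensitive to whether the coefficients are rational, the same reasoning yields the $\rc_\Q$ identity. The direction (i) $\Rightarrow$ (ii) is trivial because $\obs(X) \subseteq \Z^d \setminus X$, so any system separating $X$ from the larger set also separates it from the subset.

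For (ii) $\Rightarrow$ (i), I write $P = \setcond{x}{Ax \le b}$ and assume $X \subseteq P$ and $P \cap \obs(X) = \emptyset$; the goal is $P \cap \Z^d = X$. Arguing by contradiction, suppose $(P \setminus X) \cap \Z^d \neq \emptyset$. I would show that $P$ must then already contain an observer. To locate one, assign to each lattice point $w \in (P \setminus X) \cap \Z^d$ the finite positive integer $N(w) := \card{\conv(X \cup \{w\}) \cap \Z^d}$, and choose a minimizer $w^*$; a minimizer exists by the well-ordering of the positive integers, even when $P$ is unbounded. The claim is that $w^*$ observes $X$. Indeed, if it did not, there would be a lattice point $w' \in \conv(X \cup \{w^*\})$ with $w' \notin X \cup \{w^*\}$. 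Then $w' \in P$ by convexity and $w' \notin X$, so $w'$ is again an admissible candidate, and from $\conv(X \cup \{w'\}) \subseteq \conv(X \cup \{w^*\})$ I get $N(w') \le N(w^*)$.

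The step I expect to require the most care is upgrading this to a \emph{strict} inequality, which is what contradicts the minimality of $w^*$. If instead $N(w') = N(w^*)$, then $\conv(X \cup \{w'\}) = \conv(X \cup \{w^*\})$, and in particular $w^* \in \conv(X \cup \{w'\})$. Here I would exploit the lattice-convexity of $X$: since $w^* \in \Z^d \setminus X$ we have $w^* \notin \conv(X)$, and therefore $w^*$ is a \emph{vertex} of $\conv(X \cup \{w^*\})$. On the other hand, $w^*$ is distinct from every generator in $X \cup \{w'\}$, so it cannot be a vertex of $\conv(X \cup \{w'\})$; as these two polytopes are equal, this is a contradiction. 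Hence $N(w') < N(w^*)$, contradicting the choice of $w^*$. Consequently $w^*$ is an observer contained in $P$, contradicting (ii). This forces $P \cap \Z^d = X$, which is exactly (i), and completes the argument.
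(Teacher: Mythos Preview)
Your proof is correct. The paper itself does not spell out an argument for this proposition, stating only that it ``follows directly from the definition of $\obs(X)$''; your descent via the counting function $N(w)=\card{\conv(X\cup\{w\})\cap\Z^d}$ is precisely the natural way to make this rigorous, and it is the same mechanism the paper later uses implicitly in Algorithm~\ref{algo:observers} (the loop that replaces $q$ by a lattice point $p\in Q_q\setminus(X\cup\{q\})$ until an observer is reached). Your careful justification of the strict inequality $N(w')<N(w^*)$---using that $w^*$, being outside $\conv(X)$ by lattice-convexity, is a vertex of $\conv(X\cup\{w^*\})$ and hence cannot lie in $\conv(X\cup\{w'\})$---is exactly the point one needs and is handled cleanly.
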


\noindent If $\obs(X)$ is finite and can be computed, then it serves as a finite certificate for $\rc(X)$ and~$\rc_\Q(X)$, that allows to algorithmically determine the minimal size relaxation of~$X$.
Before we develop this algorithm in Section~\ref{sect:sep-milp}, we derive three sufficient conditions on a lattice-convex set $X \subseteq \Z^d$ under which there are only finitely many observers.

\subsubsection{Parity-complete sets}

For every fundamental cell~$F$ of~$\Z^d$ each residue class in $(\Z / 2\Z)^d$ has a representative that is a vertex of~$F$.
A generic lattice-convex set $X \subseteq \Z^d$ with sufficiently many points will contain a fundamental cell of~$\Z^d$.
These observations motivate the following class of examples and show its abundance.

We call $X \subseteq \Z^d$ \emph{parity-complete} if for every lattice point $z$ in the affine hull of $X$ there exists an $x \in X$ congruent to $z$ modulo $2$, which means that $(x-z)/2 \in \Z^d$. 

\begin{theorem}
\label{thm:parity-complete}
Let $X \subseteq \Z^d$ be a full-dimensional finite parity-complete and lattice-convex set.
Then, $\obs(X) \subseteq 2 X - X$ and in particular $\obs(X)$ is finite and computable.
\end{theorem}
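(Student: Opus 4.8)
The plan is to show the contrapositive: if $y \in \Z^d \setminus X$ is \emph{not} in $2X - X$, then $y$ does not observe~$X$, i.e. $\conv(X \cup \{y\})$ contains a lattice point different from all points of $X \cup \{y\}$. Since $2X - X$ is a finite set (being the set of differences $2x_1 - x_2$ with $x_1, x_2 \in X$), this immediately yields both finiteness and computability of $\obs(X)$. So the whole argument reduces to producing, for every observer candidate $y$, a ``witnessing'' extra lattice point whenever $y \notin 2X - X$.

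Here is how I would produce that witness. Suppose $y$ observes~$X$ and pick any $x_0 \in X$. The key idea is to use parity-completeness on the midpoint-type lattice point. Consider the point $z := y + x_0$; I want to find $x_1 \in X$ with $x_1 \equiv z \pmod 2$, i.e. $(y + x_0 - x_1)/2 \in \Z^d$. Since $y \in \aff(X)$ is a lattice point in the affine hull (because $X$ is full-dimensional, $\aff(X) = \R^d$ and $\aff(X) \cap \Z^d = \Z^d$), parity-completeness applied to $z = y + x_0$ guarantees such an $x_1 \in X$. Then the midpoint
\[
m := \frac{(y + x_0) + x_1}{2} \cdot \text{(adjust)}
\]
is the natural candidate — more precisely I would set $m := \tfrac{1}{2}(y + x_1)$ after choosing the parity correctly, so that $m \in \Z^d$ and $m = \tfrac12 y + \tfrac12 x_1 \in \conv(X \cup \{y\})$. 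Because $y$ observes~$X$, the lattice point $m$ must lie in $X \cup \{y\}$. The crux is to rule out $m = y$ (which forces $x_1 = y \notin X$, a contradiction since $x_1 \in X$) and to rule out $m \in X$ (which, rearranging $m = \tfrac12(y+x_1)$, gives $y = 2m - x_1 \in 2X - X$, exactly the excluded case). Thus if $y \notin 2X - X$ we reach a contradiction, proving $\obs(X) \subseteq 2X - X$.

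I expect the main obstacle to be the parity bookkeeping: the midpoint $\tfrac12(y + x_1)$ is a lattice point only when $y + x_1$ has all even coordinates, i.e. $y \equiv x_1 \pmod 2$, so I must apply parity-completeness to the residue class of~$y$ itself (not of $y + x_0$) to obtain $x_1 \in X$ with $x_1 \equiv y \pmod 2$. With that corrected choice, $m = \tfrac12(y+x_1) \in \Z^d$ is automatic, $m \in \conv(\{y, x_1\}) \subseteq \conv(X \cup \{y\})$, and the two case-exclusions go through cleanly as above. One should also double-check the edge case $x_1 = y$, which cannot happen since $x_1 \in X$ while $y \notin X$, and note that $m \ne y$ unless $x_1 = y$, so $m$ is genuinely a ``new'' point forcing $m \in X$ and hence $y = 2m - x_1 \in 2X - X$. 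Finiteness and computability of $\obs(X)$ then follow since $2X - X$ is explicitly a finite, computable set.
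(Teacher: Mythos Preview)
Your proposal is correct and, after your self-correction, is essentially identical to the paper's proof: take an observer $y$, use parity-completeness to find $x_1 \in X$ with $x_1 \equiv y \pmod 2$, observe that the midpoint $m = \tfrac12(y+x_1)$ is a lattice point in $\conv(X \cup \{y\})$, hence (since $y$ is an observer and $m \neq y$) $m \in X$, giving $y = 2m - x_1 \in 2X - X$. The only cosmetic issues are that you announce a contrapositive but then argue the direct implication, and that the detour through the auxiliary point $x_0$ and $z = y + x_0$ is unnecessary; the paper simply applies parity-completeness to $y$ from the outset.
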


\begin{proof}
Let $z \in \Z^d$ be an observer of $X$.
Since $X$ is parity-complete, there exists $x \in X$ satisfying $(x+z)/2 \in \Z^d$.
We conclude that $x':=(x+z) / 2 \in X$, since otherwise $X$ would not be an observer.
We thus have $z = 2 x' - x \in 2 X - X$.
\end{proof}

\subsubsection{Existence of interior lattice points}

A second class of lattice-convex sets with only finitely many observers is given by those $X$ for which $\conv(X)$ is not lattice-free.
Before we can prove this result we need to revisit some crucial results in the Geometry of Numbers.

We call the convex hull of finitely many lattice points a \emph{lattice polytope}, as usual.
Blichfeldt's theorem~\cite{blichfeldt1914anew} is a classical upper bound on the number of lattice points in a full-dimensional lattice polytope $P \subseteq \R^d$ in terms of its volume.
It states that
\begin{align}
\card{P \cap \Z^d} &\leq d! \vol(P) + d. \label{eqn:blichfeldt}
\end{align}
A lower bound on $\card{P \cap \Z^d}$ holds if $P$ is not lattice-free, that is, its set $\inter(P) \cap \Z^d$ of interior lattice points is non-empty.
However, the best possible such bound is still not known and this problem received a considerable amount of interest in the last years.
The best result to date is due to~\cite{averkovkruempelmannnill2018lattice} and reads
\begin{align}
\vol(P) &\leq c_d^d\,  \card{\inter(P) \cap \Z^d}, \label{eqn:best-hensley}
\end{align}
where $c_d = d(2d+1)(s_{2d+1}-1)$ and $(s_i)_{i \in \Z_{>0}}$ is the Sylvester sequence.
This sequence is recursively defined by $s_1 = 2$ and $s_i = 1 + s_1\cdot\ldots\cdot s_{i-1}$, for $i \geq 2$.
It grows double-exponentially and satisfies the upper bound $s_i \leq 2^{2^{i-1}}$.

The proof of Inequality~\eqref{eqn:best-hensley} is based on estimating the \emph{coefficient of asymmetry} $\ca(P,y)$ of the polytope $P$ with respect to an interior point $y \in \inter(P)$.
This magnitude is defined as
\[
\ca(P,y) := \max_{\|x\|=1} \frac{\max\setcond{\lambda > 0}{y + \lambda x \in P}}{\max\setcond{\lambda > 0}{y - \lambda x \in P}}.
\]
In~\cite[Thm.~1.4]{averkovkruempelmannnill2018lattice} it is proven that there exists a lattice point $y \in \inter(P) \cap \Z^d$ such that
\begin{align}
\ca(P,y) \leq c_d = d(2d+1)(s_{2d+1}-1).\label{eqn:ca-bound}
\end{align}

With these preparations we can now formulate and prove our anticipated description of the set of observers of a lattice-convex set $X$ with the property that $\conv(X)$ is not lattice-free.
Our arguments are somewhat similar to those used in~\cite[Thm.~12]{averkovconfortidelpiadisummafaenza2013onthe}.

\begin{theorem}
\label{thm:non-hollow-finite-obs}
Let $X \subseteq \Z^d$ be a finite lattice-convex set such that $\conv(X)$ is not lattice-free.
Then,
\[
\obs(X) \subseteq X + c_d \cdot \conv(X-X),
\]
where $c_d = d(2d+1)(s_{2d+1}-1)$.

In particular, $\obs(X)$ is finite and we have the explicit bound
\[
\card{\obs(X)} \leq c'_d \cdot \card{X},
\]
where $c'_d = d! \, (1+c_d)^{2d} \, \tbinom{2d}{d} \in \cO(d^{5d} 2^{3d \cdot 2^{2d}})$.
\end{theorem}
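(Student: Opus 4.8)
The statement has two parts: the containment $\obs(X) \subseteq X + c_d \cdot \conv(X - X)$, and the cardinality bound derived from it. The plan is to prove the containment first, using the coefficient-of-asymmetry estimate~\eqref{eqn:ca-bound}, and then to obtain the cardinality bound by applying Blichfeldt's inequality~\eqref{eqn:blichfeldt} to the lattice polytope $X + c_d \cdot \conv(X-X)$.

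For the containment, let $y \in \obs(X)$ be an observer. Since $\conv(X)$ is not lattice-free, by~\eqref{eqn:ca-bound} there exists an interior lattice point $y_0 \in \inter(\conv(X)) \cap \Z^d$ with $\ca(\conv(X), y_0) \leq c_d$. The key geometric idea is to consider the segment from $y_0$ to $y$. Because $y_0$ lies in the interior of $\conv(X)$, this segment, extended slightly beyond $y_0$ in the opposite direction, must cross the boundary of $\conv(X)$; write $w$ for the point where the ray from $y$ through $y_0$ exits $\conv(X)$ on the far side of $y_0$. The coefficient of asymmetry controls the ratio in which $y_0$ divides the chord through $w$ and $y$: roughly, the distance from $y_0$ towards $y$ is at most $c_d$ times the distance from $y_0$ towards $w$. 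The plan is to exploit the fact that $y$ \emph{observes} $X$: the open segment $(y_0, y)$ may contain no lattice point other than the endpoints' admissible ones — in fact, since $y_0 \in \inter(\conv(X))$ and $y$ is the only new lattice point allowed in $\conv(X \cup \{y\})$, one deduces that $y_0$ actually lies on the boundary segment pattern forcing $y$ close to $\conv(X)$. Combining the asymmetry bound with $w \in \conv(X)$ and $y_0 \in \conv(X)$ yields $y \in y_0 + c_d(y_0 - w) \subseteq X + c_d \cdot \conv(X - X)$, after writing $y_0$ and $w$ as convex combinations of points of $X$ and absorbing the differences into $\conv(X - X)$.

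I expect the main obstacle to be the second sentence of that argument: carefully using the observer hypothesis to guarantee that $y_0$ can be taken as an \emph{interior} lattice point whose chord to $y$ stays lattice-point-free, so that the asymmetry estimate translates directly into a bound on the displacement $y - y_0$. One must argue that no intermediate lattice point spoils the convexity of $X \cup \{y\}$, and that the exit point $w$ genuinely lies in $\conv(X)$ rather than merely in some larger set. This is the step where the interplay between ``not lattice-free'' (giving a good interior point via~\eqref{eqn:ca-bound}) and ``$y$ observes $X$'' (controlling the segment) must be made precise; this is the analogue of~\cite[Thm.~12]{averkovconfortidelpiadisummafaenza2013onthe}, so I would model the estimate on that argument.

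For the cardinality bound, the containment reduces the task to counting lattice points in the lattice polytope $Q := X + c_d \cdot \conv(X - X)$. First observe that $Q$ is indeed a lattice polytope: $X$ is a finite set of lattice points and $c_d$ is an integer, so $c_d(X - X)$ has lattice vertices and $Q$ is a Minkowski sum of lattice polytopes. The plan is to apply Blichfeldt's bound~\eqref{eqn:blichfeldt}, which gives $\card{Q \cap \Z^d} \leq d! \vol(Q) + d$. To estimate $\vol(Q)$, I would bound $\vol(\conv(X - X))$ in terms of $\card{X}$: the difference body $\conv(X - X) = \conv(X) - \conv(X)$ has volume controlled by the number of lattice points of $X$, and scaling by $1 + c_d$ (after translating $X$ into the picture) contributes a factor $(1 + c_d)^{d}$, while the Rogers--Shephard-type comparison of the difference body to $\conv(X)$ contributes the binomial factor $\binom{2d}{d}$. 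Collecting these yields $\card{\obs(X)} \leq c'_d \cdot \card{X}$ with $c'_d = d!\,(1 + c_d)^{2d}\,\binom{2d}{d}$, and the asymptotic estimate $c'_d \in \cO(d^{5d} 2^{3d \cdot 2^{2d}})$ follows from the double-exponential growth $s_i \leq 2^{2^{i-1}}$ of the Sylvester sequence, which forces $c_d \in \cO(d^2 2^{2^{2d}})$ and hence $(1 + c_d)^{2d} \in \cO(2^{3d \cdot 2^{2d}})$ after absorbing polynomial factors.
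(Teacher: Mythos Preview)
Your plan has the right overall shape, but there is a genuine gap in the containment argument: you apply the asymmetry bound~\eqref{eqn:ca-bound} to the wrong polytope. You take $y_0$ with $\ca(\conv(X),y_0)\le c_d$ and then assert that ``the distance from $y_0$ towards $y$ is at most $c_d$ times the distance from $y_0$ towards $w$.'' That does not follow. The quantity $\ca(\conv(X),y_0)$ only compares distances from $y_0$ to the boundary of $\conv(X)$ in opposite directions; since the observer $y$ lies \emph{outside} $\conv(X)$, the asymmetry of $\conv(X)$ says nothing about $\|y-y_0\|$. Your attempt to bridge this with the observer hypothesis (``the open segment $(y_0,y)$ may contain no lattice point\dots'') does not yield a metric bound either: a segment from an interior point of $\conv(X)$ to an observer can be arbitrarily long relative to $\conv(X)$ while remaining lattice-point-free in the required sense.

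The paper's fix is simple but essential: apply~\eqref{eqn:ca-bound} to $Q_y:=\conv(X\cup\{y\})$ instead. This polytope is still not lattice-free (it contains $\conv(X)$), so there exists an interior lattice point $y_0\in\inter(Q_y)\cap\Z^d$ with $\ca(Q_y,y_0)\le c_d$. The observer property now enters cleanly: since $Q_y\cap\Z^d=X\cup\{y\}$ and $y$ is a vertex of $Q_y$, the interior point $y_0$ must lie in $X$. Because $y$ is on the boundary of $Q_y$ and the opposite boundary point $z$ of $Q_y$ along the line through $y_0$ and $y$ lies in $\conv(X)$, the asymmetry of $Q_y$ gives $\|y-y_0\|\le c_d\,\|y_0-z\|$, hence $y=y_0+\lambda(y_0-z)$ with $0<\lambda\le c_d$, and $y\in X+c_d\cdot\conv(X-X)$ follows.

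For the cardinality bound your outline is essentially right (translate so $0\in X$, enclose in $(1+c_d)\conv(X-X)$, apply Blichfeldt and Rogers--Shephard), but you are missing one ingredient: you still need to bound $\vol(\conv(X))$ in terms of $|X|$, and this is exactly where~\eqref{eqn:best-hensley} is used, contributing the second factor $c_d^d$ that combines with the $(1+c_d)^d$ from scaling to give the stated $(1+c_d)^{2d}$.
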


\begin{proof}
Let $p \in \obs(X)$ and write $Q_p = \conv(X \cup \{p\})$.
Further, let $y \in \inter(Q_p) \cap \Z^d$ be a lattice point satisfying~\eqref{eqn:ca-bound}, that is, $\ca(Q_p,y) \leq d(2d+1)(s_{2d+1}-1)$.
Since $p$ was taken to be an observer, we necessarily have that $y \in X$.
Let $z \in \conv(X)$ be the intersection point of $\conv(X)$ with the ray in direction $y-p$ and emanating from~$p$.
Then, by the definition of $\ca(Q_p,y)$, there is a positive number $0 < \lambda \leq \ca(Q_p,y)$ such that $p-y = \lambda (y-z)$.
Thus,
\[
p = y + \lambda (y-z) \in X + \lambda \cdot \conv(X-X) \subseteq X + \ca(Q_p,y) \cdot \conv(X-X),
\]
so that by~\eqref{eqn:ca-bound} the claimed inclusion holds with $c_d = d(2d+1)(s_{2d+1}-1)$.

In order to estimate the number of observers of~$X$, we first assume without loss of generality that $0 \in X$.
This implies $X \subseteq X - X$, which in turn gives $X + c_d \cdot \conv(X-X) \subseteq (1+c_d) \cdot \conv(X-X)$.
Blichfeldt's bound~\eqref{eqn:blichfeldt} applied to $P = \conv(X-X)$ gives us
\begin{align}
\card{\obs(X)} &\leq \card{((1+c_d) \cdot \conv(X-X)) \cap \Z^d} \nonumber\\
&\leq d! \, (1+c_d)^d \vol(\conv(X-X)) + d \nonumber\\
&\leq d! \, (1+c_d)^d \, \tbinom{2d}{d} \vol(\conv(X)) + d \label{eqn:RS}\\
& \leq d! \, (1+c_d)^d \, \tbinom{2d}{d} \, c_d^d \cdot \card{X} + d \ \leq \  c'_d \cdot \card{X}.\label{eqn:int-lat-points}
\end{align}
For the inequality~\eqref{eqn:RS} we observe that by $\conv(X-X) \subseteq \conv(X) - \conv(X)$ we can apply the Rogers-Shephard inequality~\cite{rogersshephard1957the}, whereas for~\eqref{eqn:int-lat-points} we use the fact that $X$ is lattice-convex, and employ the volume bound~\eqref{eqn:best-hensley}.
The claimed asymptotic growth of the dimensional constant $c'_d$ follows by that of $c_d$ and Stirling's approximation of~$d!$.
\end{proof}

\begin{remark}
\label{rem:pikhurkos-guess}
The best-known bound~\eqref{eqn:ca-bound} on the minimal coefficient of asymmetry of an interior lattice point~$y$ in a lattice polytope~$P \subseteq \R^d$ is certainly quite far from optimal.
Pikhurko~\cite{pikhurko2001lattice} proposes that the optimal bound should rather read
\[
\ca(P,y) \leq s_d^2-2,
\]
which would be an enormous improvement given the double-exponential growth of the Sylvester sequence.
\end{remark}

\subsubsection{Sets of large lattice-width}

Our third class of examples with only finitely many observers is informally described as those $X$ that cannot be sandwiched between two parallel lattice planes of small distance.
More precisely, for an integral vector $u \in \Z^d \setminus \{0\}$ the \emph{width} of a subset $S \subseteq \R^d$ in direction~$u$ is defined as
\[
w(S,u) = \max_{x \in S} u^\intercal x - \min_{x \in S} u^\intercal x,
\]
and the \emph{lattice-width} of~$S$ is defined as
\[
w(S) = \min_{u \in \Z^d \setminus \{0\}} w(S,u).
\]
In order to describe our result, we moreover need a concept introduced by Blanco et al.~\cite{blancohaasehofmannsantos2016finiteness}: The \emph{finiteness threshold width} is the constant $w^\infty(d) \in \Z_{>0}$ such that for every $n \in \Z_{>0}$, up to unimodular equivalence, all but finitely many lattice $d$-polytopes with $n$ lattice points have lattice-width at most $w^\infty(d)$.
In~\cite{blancohaasehofmannsantos2016finiteness} it is proven that $d-2 \leq w^\infty(d) \leq \cO(d^\frac32)$, and the authors obtain the exact values $w^\infty(3) = 1$ and $w^\infty(4) = 2$, the former being shown already in~\cite{blancosantos2016lattice}.

\begin{theorem}
\label{thm:hollow-threshold-width}
Let $X \subseteq \Z^d$ be a full-dimensional finite lattice-convex set.
If $w(X) > w^\infty(d)$, then $\obs(X)$ is finite.
\end{theorem}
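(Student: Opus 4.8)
The plan is to reduce the finiteness of $\obs(X)$ to the defining property of the threshold width $w^\infty(d)$. The key observation is that every observer produces a lattice $d$-polytope whose number of lattice points is fixed and whose lattice-width is bounded below by $w(X)$, hence strictly above $w^\infty(d)$; the threshold property then confines these polytopes to finitely many unimodular classes, and a rigidity argument shows that each class can account for only finitely many observers.

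First I would fix $n := \card{X}+1$ and record that for every observer $y \in \obs(X)$ the polytope $Q_y := \conv(X \cup \{y\})$ is a full-dimensional lattice $d$-polytope (full-dimensional because $X$ is) whose lattice points are exactly the $n$ points of $X \cup \{y\}$. Since $\conv(X) \subseteq Q_y$, and since lattice-width is monotone under inclusion — for each fixed direction $u$ one has $w(\conv(X),u) \leq w(Q_y,u)$, and taking the minimum over $u$ preserves this inequality — I obtain $w(Q_y) \geq w(\conv(X)) = w(X) > w^\infty(d)$.

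Next I would invoke the definition of $w^\infty(d)$ for the fixed value $n$: up to unimodular equivalence there are only finitely many lattice $d$-polytopes with $n$ lattice points and lattice-width exceeding $w^\infty(d)$. As lattice-width is a unimodular invariant, the family $\{Q_y : y \in \obs(X)\}$ meets only finitely many such classes. Supposing for contradiction that $\obs(X)$ is infinite, some class must contain infinitely many of the $Q_y$; I would then fix a representative $P_0$ of this class and, for each $y$ in it, a unimodular transformation $f_y$ with $f_y(Q_y) = P_0$.

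The decisive step — the part I expect to carry the real content — is to bound the number of observers inside a single class. Applying $f_y$ to $\conv(X) \subseteq Q_y$, the image $f_y(\conv(X))$ is the convex hull of all but one of the $n$ lattice points of $P_0$, the omitted point being $f_y(y)$; hence $f_y(\conv(X))$ ranges over at most $n$ distinct target polytopes. For each such target $P'$, an affine unimodular map sending the full-dimensional polytope $\conv(X)$ onto $P'$ is determined by the images of $d+1$ affinely independent vertices of $\conv(X)$, and those images must be among the finitely many vertices of $P'$; thus only finitely many maps $f_y$ occur. Consequently there are finitely many polytopes $Q_y = f_y^{-1}(P_0)$ in the class, and since $y$ is recovered as the unique lattice point of $Q_y$ outside $X$, finitely many observers lie in the class. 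This contradicts the assumed infinitude, so $\obs(X)$ is finite. The only genuine subtlety is this last rigidity argument, which rests on the fact that a unimodular affine map is pinned down by the images of $d+1$ affinely independent vertices of a full-dimensional lattice polytope.
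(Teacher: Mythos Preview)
Your proof is correct and shares the same opening with the paper's argument: each observer $y$ yields a lattice $d$-polytope $Q_y$ with exactly $n=\card{X}+1$ lattice points and lattice-width at least $w(X)>w^\infty(d)$, so the $Q_y$ fall into finitely many unimodular classes. From there the two proofs diverge. The paper derives a contradiction by letting the observer escape any box $[-M,M]^d$, arguing that the height of~$y$ over some facet of $\conv(X)$, hence $\vol(Q_y)$, grows without bound; since volume is a unimodular invariant this produces infinitely many classes. You instead bound the number of observers within a \emph{single} class by a rigidity count: any unimodular $f_y$ sending $Q_y$ to a fixed representative $P_0$ must carry $\conv(X)$ onto one of at most~$n$ subpolytopes of~$P_0$, and for each target there are only finitely many affine maps because $d+1$ affinely independent vertices of $\conv(X)$ must land on vertices of the target. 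Your route avoids the slightly informal ``height grows with~$M$'' step in the paper and makes the finiteness entirely combinatorial; the paper's route is shorter and, since it shows that the volumes of the $Q_y$ are bounded, gestures more directly toward an eventual effective bound on $\obs(X)$.
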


\begin{proof}
Assume for contradiction that $\card{\obs(X)}=\infty$.
Let $p \in \obs(X)$ and write $Q_p = \conv(X \cup \{p\})$.
Observe that $\card{Q_p \cap \Z^d} = \card{X} + 1 =: n$ and $w(Q_p) \geq w(X) > w^\infty(d)$.
Now, for every $M \geq 0$, there is an observer $p \in \obs(X)$ outside the box $[-M,M]^d$.
Since $X$ is full-dimensional the corresponding polytope $Q_p$ has a facet $F$ such that the height of~$p$ over~$F$ is lower bounded by an increasing function in~$M$.
As a consequence there are infinitely many possible values for the volume of $Q_p$, and hence there are infinitely many unimodularly non-equivalent lattice polytopes $Q_p$, which all have exactly $n$ lattice points and lattice-width $> w^\infty(d)$.
This contradicts the definition of the finiteness threshold width.
\end{proof}

Unlike in Theorem~\ref{thm:parity-complete} and Theorem~\ref{thm:non-hollow-finite-obs}, the proof of Theorem~\ref{thm:hollow-threshold-width} does not provide a mean to algorithmically compute the set $\obs(X)$ in the case that $w(X) > w^\infty(d)$.
Computability would follow if we are given an explicit upper bound $f(w,d,n)$ on the volume of the finitely many lattice $d$-polytopes with $n$ lattice points and lattice-width $w > w^\infty(d)$.
To the best of our knowledge such an explicit volume bound has not been proven by the time of writing.
Moreover, it is not clear how to determine the constant $w^\infty(d)$ algorithmically in a given dimension~$d$.

However, in a later section we develop a general algorithm that computes $\obs(X)$ under the sole assumption that this set is finite (see Theorem~\ref{thm:finite-obs-computability}).

\subsection{Separation of two finite sets using MILP} 
\label{sect:sep-milp}

For sets $X, Y \subseteq \R^d$ we say that \emph{$X$ is separated from $Y$ by a system $Ax \le b$} of $k$ linear inequalities, if $A x\le b$ is fulfilled for every $x \in X$ and not fulfilled for any $y \in Y$. 
In this section, we address the following computational problem. 

\begin{framed}[.85\textwidth]
\centerline{\emph{Separation Problem}}
\smallskip
\noindent Given finite subsets $X$ and $Y$ of $\Q^d$ and $k \in \Z_{>0}$, determine a system $A x \le b$ of $k$ linear inequalities that separates $X$ from~$Y$. 
\end{framed}

\noindent Phrased geometrically, the problem asks to determine a polyhedron $P$ with at most $k$ facets satisfying $X \subseteq P$ and $P \cap Y = \emptyset$. 
In view of this interpretation it is clear that the separation problem is strongly related to the notion of the relaxation complexity. The difference is that in contrast to the relaxation complexity, where $Y = \Z^d \setminus X$ is an infinite set, in the setting of the separation problem both $X$ and~$Y$ are finite.
This makes the problem more accessible from the algorithmic perspective.

Various special versions of this problem were considered in the literature. For example, if $X \subseteq \{0,1\}^d$ and $Y = \{0,1\}^d \setminus X$, then in game theory the minimal $k$ such that $X$ can be separated from $Y$ by $k$ linear inequalities is called the the dimension of a game (cf.~\cite{taylorzwicker1999simplegames}), or threshold number of a game (cf.~\cite{hammeribarakipeled1981threshold}). 
In our notation this number equals $\rc(X,\{0,1\}^d)$, and in yet a different language, Jeroslow~\cite{jeroslow1975ondefining} proved that $\rc(X, \{0,1\}^d) \leq 2^{d-1}$ and exhibited examples that attain equality.
Hojny~\cite{hojny2018strong} studied relaxations within $\{0,1\}^d$ with respect to the size of the coefficients used.
	
The separation problem can be reduced to mixed-integer linear programming (MILP). To this end, we introduce the parameter 
\begin{equation}
	\rho := \max \{ \|x\|_\infty \,:\, x \in X\}, \label{rho:eq}
\end{equation}
where $\|x\|_\infty$ denotes the maximum norm of~$x$, so that $X \subseteq [-\rho,\rho]^d$.
We also fix the big-$M$ parameter
\begin{equation}
	M :=  2 (d \rho  + 1). \label{M:eq}
\end{equation}
We formulate a MILP that uses binary variables to encode the decision whether a given inequality separates~$X$ from a given point of $Y$. The real variables of the MILP are the coefficients of the system $Ax \le b$, and the lower bound $\mu$ is the margin by which an inequality of $A x \le b$ not valid on a point $y \in Y$ is violated:

\newcommand{\SEPMILP}{\operatorname{(SEP-MILP)}}

\begin{alignat*}{3}
\SEPMILP\qquad & \text{maximize} & \mu & & & \\
 & \text{subject to} & A x  & \le b & &  \forall x \in X, \\
 & & \mathbf{1}^\intercal s_y & \ge 1  & & \forall y \in Y, \\
 & & A y + M(\mathbf{1} - s_y) & \ge b + \mathbf{1} \mu \qquad\quad & & \forall y \in Y, \\
 & & A & \in [-1,1]^{k \times d} & & \\
 & & b & \in [-d\rho,d\rho]^k & & \\
 & & \mu & \in [0,1] & & \\
 & & s_y & \in \{0,1\}^k  & & \forall y \in Y.
\end{alignat*}

\begin{proposition} \label{prop:milp}
	Let $X$ and $Y$ be non-empty finite subsets of $\Q^d$ and let $k \in \Z_{>0}$. Then the following conditions are equivalent:
	\begin{enumerate}[label=(\roman*)]
		\item \label{X:sep:Y} The set $X$ can be separated from $Y$ by a system of $k$ linear inequalities.
		\item \label{X:sep:Y:rat} The set $X$ can be separated from $Y$ by a system of $k$ rational linear inequalities. 
		\item \label{sep:milp:cond} The mixed-integer linear problem $\SEPMILP$ with $k(d+1)+1$ real and $\card{Y} k$ binary variables and parameters $\rho$ and $M$, given by \eqref{rho:eq} and \eqref{M:eq}, respectively, has a strictly positive optimal value. 
	\end{enumerate}
	Furthermore, the following statements hold:
	\begin{enumerate}[label=(\alph*)]
		\item \label{SEPMILP->SYSTEM} If $A, b, \mu, s_y\, (y \in Y)$ is a feasible solution of $\SEPMILP$ with a strictly positive optimal value $\mu>0$, then $A x \le b$ is a system of $k$ linear inequalities that separates $X$ from $Y$. 
		\item \label{kY:polytime} If $k$ and $\card{Y}$ are fixed, then $\SEPMILP$, and by this also the separation problem with input $X$, $Y$ and $k$, can be solved in polynomial time. 
	\end{enumerate}
\end{proposition}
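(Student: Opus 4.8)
The plan is to prove statement~\ref{SEPMILP->SYSTEM} on its own, and then to close the cycle of equivalences \ref{X:sep:Y} $\Rightarrow$ \ref{sep:milp:cond} $\Rightarrow$ \ref{X:sep:Y:rat} $\Rightarrow$ \ref{X:sep:Y}, using \ref{SEPMILP->SYSTEM} as the engine for the middle implication. The guiding interpretation is that a binary vector $s_y \in \{0,1\}^k$ records which of the $k$ inequalities of $Ax \le b$ is made responsible for cutting off the point $y$: the row $a_i^\intercal y + M(1-(s_y)_i) \ge b_i + \mu$ collapses to $a_i^\intercal y \ge b_i + \mu$ when $(s_y)_i = 1$ (so that inequality $i$ genuinely separates $y$ with margin $\mu$), and is meant to be slack when $(s_y)_i = 0$ and $M$ dominates $b_i - a_i^\intercal y$.

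For \ref{SEPMILP->SYSTEM}, suppose $(A,b,\mu,s_y)$ is feasible with $\mu > 0$. The constraints $Ax \le b$ for $x \in X$ place $X$ inside the polyhedron $P = \{x : Ax \le b\}$. For each $y \in Y$ the constraint $\mathbf{1}^\intercal s_y \ge 1$ forces some $(s_y)_i = 1$, and the corresponding row then reads $a_i^\intercal y \ge b_i + \mu > b_i$, so $y \notin P$. Hence $Ax \le b$ separates $X$ from $Y$, which is exactly \ref{SEPMILP->SYSTEM}.

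For \ref{X:sep:Y} $\Rightarrow$ \ref{sep:milp:cond} I start from any real separating system and first normalize it: dividing each row by $\|a_i\|_\infty$ I may assume $\|a_i\|_\infty \le 1$, and replacing $b_i$ by $\max_{x \in X} a_i^\intercal x$ only shrinks the halfspaces --- preserving validity on $X$ and separation of $Y$ --- while forcing $b_i \in [-d\rho, d\rho]$. The one real obstacle is that a point $y \in Y$ may lie far outside $[-\rho,\rho]^d$, so that for an index $i$ not cutting off $y$ the slack $b_i - a_i^\intercal y$ can exceed $M$; I resolve this by scaling the whole pair $(A,b)$ by a small factor $\eps > 0$. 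Scaling leaves the box constraints and the separating index set of each $y$ intact, keeps a positive separation margin, and drives every slack $b_i - a_i^\intercal y$ below $M$. Setting $(s_y)_i = 1$ for a single separating index of each $y$ and choosing $\mu$ to be a small enough positive number then produces a feasible solution with $\mu > 0$, so the optimum of $\SEPMILP$ is strictly positive. For \ref{sep:milp:cond} $\Rightarrow$ \ref{X:sep:Y:rat} I use that the feasible region of $\SEPMILP$ is compact and defined by rational data (note $\rho, M \in \Q$): fixing an optimal binary assignment, the continuous maximization of $\mu$ is a linear program whose optimum is attained at a rational vertex with value $\mu > 0$, yielding a rational feasible solution; by \ref{SEPMILP->SYSTEM} its system $Ax \le b$ is then a rational separating system. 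Finally \ref{X:sep:Y:rat} $\Rightarrow$ \ref{X:sep:Y} is immediate, since a rational separating system is in particular a real one.

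For \ref{kY:polytime} I exploit that the only integer variables of $\SEPMILP$ are the $\card{Y}\, k$ binary entries of the vectors $s_y$. When $k$ and $\card{Y}$ are fixed there are only $2^{\card{Y} k} = \cO(1)$ possible binary assignments; for each of them $\SEPMILP$ reduces to a linear program in the continuous variables $A, b, \mu$, solvable in polynomial time. Enumerating these constantly many assignments, discarding those that violate some $\mathbf{1}^\intercal s_y \ge 1$, and keeping the best objective value solves $\SEPMILP$; extracting $(A,b)$ from an optimal solution with $\mu > 0$ via \ref{SEPMILP->SYSTEM} then solves the separation problem as well. I expect the genuinely delicate point of the whole argument to be the big-$M$ bookkeeping in \ref{X:sep:Y} $\Rightarrow$ \ref{sep:milp:cond}: because $Y$ need not be contained in $[-\rho,\rho]^d$, mere normalization does not make the rows with $(s_y)_i = 0$ slack, and it is the scaling trick that renders $M = 2(d\rho+1)$ --- indeed any fixed positive value of $M$ --- sufficient.
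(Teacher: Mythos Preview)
Your argument is correct and follows the same overall architecture as the paper's proof: normalize the separating system to fit the box constraints, assign to each $y$ a witness index via $s_y$, read off a rational system from an optimal LP vertex after fixing the binaries, and enumerate the $2^{k|Y|}$ binary assignments for~\ref{kY:polytime}.

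The one substantive difference is in the implication \ref{X:sep:Y} $\Rightarrow$ \ref{sep:milp:cond}. The paper, after normalizing so that $A \in [-1,1]^{k\times d}$ and each inequality is tight at some $x \in X$, verifies the big-$M$ constraint for $(s_y)_i = 0$ via the estimate $a_i^\intercal y \ge -d\rho$. That bound is only guaranteed when $\|y\|_\infty \le \rho$, whereas $\rho$ is defined solely from~$X$; for general $Y \subseteq \Q^d$ this step is not justified as written. Your additional scaling of $(A,b)$ by a small $\eps>0$ drives every slack $b_i - a_i^\intercal y$ below $M$ regardless of how far $Y$ lies from the origin, so your version covers the proposition in its stated generality. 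In effect you have patched a gap that the paper glosses over; the rest of your proof and the paper's coincide.
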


\begin{proof}
\ref{X:sep:Y} $\Rightarrow$ \ref{sep:milp:cond}: If $X$ can be separated from $Y$ by a system $Ax \le b$ of $k$ linear inequalities, then we can rescale each inequality of the system by an appropriate non-negative value to ensure $A \in [-1,1]^{k \times d}$, which means that each coefficient of the left-hand side lies in the range $[-1,1]$.
Afterwards, we can change $b$ to ensure that each inequality of $Ax \le b$ is attained with equality on some point of $X$, by appropriately decreasing the respective right-hand side.
After these modifications, we have $b \in [-d \rho, d \rho]^d$ and $Ax \in [-d \rho,d \rho]^d$ for every $x \in X$.

We now show that $A$ and $b$ constructed above can be extended to a feasible solution of $\operatorname{(SEP-MILP)}$ that has a positive objective value $\mu>0$.
For each $y \in Y$, there exists an index $i(y) \in [k]$ such that the $i$-th inequality $a_i^\intercal x \le b_i$ of the system $Ax \le b$ is violated on $y$, for $i=i(y)$.
We fix $s_y = e_{i(y)}$ and $\mu = \min\left\{ \{1 \} \cup \{ (b_{i(y)} - a_{i(y)}^\intercal y \, :\, y \in Y \} \right\} > 0$.
It is not hard to check that the above choice of $A, b, \mu, s_y \ (y \in Y)$ is feasible.
In fact, $A x \le b$ holds for every $x \in X$ by construction, $\mathbf{1}^\intercal s_y  \ge 1$ holds since $s_y \in \{e_1,\ldots,e_k\}$.
Let's check that $A y + M(\mathbf{1} - s_y) \ge b + \mathbf{1} \mu$ holds, too.
Consider the $i$-th inequality $a_i^\intercal y +  M(1-s_{y,i}) \ge b_i + \mu$.
If $s_{y,i}=0$, then the left hand side is at least $\rho d + 2$ in view of $a_i^\intercal y \ge - \rho d$ and $M = 2 (\rho d + 1)$, while the right hand side is at most $\rho d +1$ in view of $b_i \le \rho d$ and $\mu \le 1$.
If $s_{y,i}=1$, then the $i$-th inequality of $A x \le b$ is not valid by the margin, which is at least $\mu$, and so we see that $a_i^\intercal y + M(1-s_{y,i}) = a_i^\intercal p \ge b_i + \mu$, as desired. 
	
	\ref{sep:milp:cond} $\Rightarrow$ \ref{X:sep:Y:rat}: It is clear that the set of feasible solutions of $\SEPMILP$ is a union of finitely many rational polyhedra. This shows that in case of feasibility, the problem always has a rational optimal solution. It is straightforward to see that such a rational optimal solution yields a rational system $A x \le b$ of inequalities that separates $X$ from $Y$. 
	
	The implication \ref{X:sep:Y:rat} $\Rightarrow$ \ref{X:sep:Y} is clear. 
	
	Claim~\ref{SEPMILP->SYSTEM} follows from the interpretation of the constraints of $\SEPMILP$ that has been given in the proof above. 	As for assertion~\ref{kY:polytime}, note that if $k$ and $|Y|$ are fixed, the number of possible choices of the variables $s_y \ ( y \in Y)$ is $2^{k |Y|}$, which is a constant. Thus, by enumeration of all possible choices, solving $\SEPMILP$ gets reduced to solving $2^{k |Y|}$ linear programs. 
\end{proof}

As a consequence of the above studies, we can now prove Parts~\ref{par:complete} and~\ref{int:cond} of Theorem~\ref{thm:main-conds}.

\begin{corollary}
\label{cor:X-Y-finite-computability}
Let $X,Y \subseteq \Z^d$ be finite sets such that $X$ is lattice-convex and $X \subseteq Y$.
Then, $\rc(X,Y) = \rc_\Q(X,Y)$ and this number can be computed algorithmically.

In particular, $\rc(X) = \rc_\Q(X)$ is computable if $X$ is parity-complete or $\conv(X)$ is not lattice-free.
\end{corollary}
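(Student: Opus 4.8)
The plan is to combine the finiteness and computability of the observer set with the MILP-based separation algorithm developed in Proposition~\ref{prop:milp}. The key conceptual bridge is Proposition~\ref{prop:obs-observations:XuObs}, which tells us that separating $X$ from the (possibly infinite) complement $\Z^d \setminus X$ is equivalent to separating $X$ from the observer set $\obs(X)$, and hence $\rc(X) = \rc(X, X \cup \obs(X))$ and likewise for $\rc_\Q$. So the first step is to reduce the computation of $\rc(X)$ to a separation problem for the \emph{finite} pair $X$ and $Y := X \cup \obs(X)$, which is exactly the setting in which Proposition~\ref{prop:milp} applies.

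For the general statement $\rc(X,Y) = \rc_\Q(X,Y)$ with both sets finite, I would argue as follows. By Proposition~\ref{prop:milp}, for each fixed $k$ the three conditions \ref{X:sep:Y}, \ref{X:sep:Y:rat}, and \ref{sep:milp:cond} are equivalent; in particular \ref{X:sep:Y} $\Leftrightarrow$ \ref{X:sep:Y:rat} shows that $X$ can be separated from $Y$ by $k$ linear inequalities if and only if it can be separated by $k$ \emph{rational} linear inequalities. Taking the minimal such $k$ over both sides yields $\rc(X,Y) = \rc_\Q(X,Y)$ immediately. For computability, I would note that $\rc(X,Y)$ is at most some trivial bound (e.g. $\card{Y \setminus X}$, since each excluded point can be cut off by its own inequality), so one can simply test $k = 1, 2, 3, \dots$ in increasing order, solving $\SEPMILP$ for each $k$ by Proposition~\ref{prop:milp}\ref{kY:polytime}, and return the first $k$ for which the optimal value is strictly positive. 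Since the MILP is decidable and the search range is finite, this terminates.

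The ``in particular'' clause then follows by specialization. If $X$ is parity-complete, Theorem~\ref{thm:parity-complete} gives $\obs(X) \subseteq 2X - X$, so $\obs(X)$ is finite and explicitly computable; if $\conv(X)$ is not lattice-free, Theorem~\ref{thm:non-hollow-finite-obs} gives the containment $\obs(X) \subseteq X + c_d \cdot \conv(X-X)$ with an explicit finite bound, so again $\obs(X)$ is finite and computable. In either case I set $Y := X \cup \obs(X)$, apply the finite result to conclude $\rc(X,Y) = \rc_\Q(X,Y)$ and its computability, and then invoke Proposition~\ref{prop:obs-observations:XuObs} to transfer these back to $\rc(X) = \rc_\Q(X)$. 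One should be careful to restrict to the full-dimensional case (passing to the affine lattice $\aff(X) \cap \Z^d$ as in earlier proofs), since both observer-finiteness theorems are stated for full-dimensional $X$.

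The main obstacle I anticipate is not in the logical structure, which is a fairly direct assembly, but in making the computability claim fully rigorous: one must ensure that $\obs(X)$ can be \emph{enumerated} effectively, not merely bounded. For the parity-complete and non-hollow cases this is fine, because the explicit supersets $2X - X$ and $X + c_d \cdot \conv(X - X)$ are finite regions one can enumerate lattice point by lattice point, testing the defining condition $\conv(X \cup \{y\}) \cap \Z^d = X \cup \{y\}$ for each candidate $y$ (itself a finite check). I would flag that this membership test for $\obs(X)$ requires only deciding lattice-convexity of a finite point configuration, which is algorithmic. The subtlety to state carefully is that the explicit dimensional constant $c_d$ from Theorem~\ref{thm:non-hollow-finite-obs} is what guarantees the search region is \emph{a priori} bounded and hence the enumeration terminates; without such an effective bound one would only have finiteness, not computability.
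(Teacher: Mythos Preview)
Your proposal is correct and follows essentially the same approach as the paper, which simply cites Propositions~\ref{prop:milp} and~\ref{prop:obs-observations:XuObs} together with Theorems~\ref{thm:parity-complete} and~\ref{thm:non-hollow-finite-obs}. You have supplied more detail than the paper does (the explicit linear search over~$k$, the trivial upper bound $\card{Y\setminus X}$, the reduction to the full-dimensional case, and the effective enumeration of $\obs(X)$), all of which is sound and fleshes out what the paper leaves implicit.
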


\begin{proof}
Follows from Propositions~\ref{prop:milp} and \ref{prop:obs-observations:XuObs}, and from the finiteness and computability of $\obs(X)$ in the case that $X$ is parity-complete or $\conv(X)$ is not lattice-free, established in Theorem~\ref{thm:parity-complete} and Theorem~\ref{thm:non-hollow-finite-obs}, respectively.
\end{proof}

Based on Theorem~\ref{thm:hollow-threshold-width} we prove Theorem~\ref{thm:main-conds}\ref{large:width} along the same lines.
At this point however, we are missing computability of $\obs(X)$ in the case of lattice-width $w(X) > w^\infty(d)$.
This will be completed with Theorem~\ref{thm:finite-obs-computability} below.

\section{Mixed-integer quantifier elimination and applications to the computation of the relaxation complexity}
\label{sect:quant-elim}
In this section, we utilize the theory of quantifier elimination towards deciding computability of the relaxation complexity of specially structured lattice-convex sets.

We first develop quantifier elimination for a special mixed-integer version of quantified boolean combinations of linear inequalities.
This will be one of the key ingredients in our proof of Theorem~\ref{thm:main-low-dims}\ref{d=3}, since it enables us to algorithmically compute the relaxation complexity for three-dimensional lattice-convex sets that have infinitely many observers, that is, those that cannot be dealt with using the tools from Section~\ref{sect:sep-milp}.
In the second part, we exploit decidability of Presburger arithmetic and devise an algorithm that computes the set of observers under the sole assumption that it is finite.

\subsection{A special quantifier elimination problem}
\label{sect:bcli}

This section is devoted to a special case of a mixed-integer linear quantifier elimination problem, that we need to settle the computability of $\rc(X)$ for three-dimensional lattice-convex sets~$X$.
The reader interested in the relaxation complexity alone may skip this section and jump right to Section~\ref{sect:d=3}.

Let $f_1,\ldots,f_m : \R^d \to \R$ be polynomial functions in~$d$ variables with coefficients in~$\Q$, and let $B : \{\true,\false\}^m \to \{\true,\false\}$ be a Boolean function in~$m$ variables.
We call the function $C : \R^d \to \{\true,\false\}$ defined by
\[
	C(x) := B \Bigl( (f_1(x) \ge 0) ,\ldots,  (f_m(x) \ge 0) \Bigr), \quad x \in \R^d,
\]
a \emph{Boolean combination of polynomial inequalities}, for short $\BCPI$.
If all the $f_i$ are affine functions, we call $C(x)$ a \emph{Boolean combination of linear inequalities}, and write $\BCLI$.
More generally, we also allow to use $f_i(x) \triangleright 0$ with $\triangleright \in \{=, \ge , \le , > , < \}$ in any combination, which does not increase the expressive power of $\BCPI$'s, because $f_i(x) > 0$ is a negation of $f_i(x) \le 0$, $f_i(x) \le 0$ is equivalent to $-f_i(x) \ge 0$, $f_i(x) = 0$ can be expressed as conjuction of $f_i(x) \ge 0$ and $-f_i(x) \ge 0$, etc.

Spurred by Hilbert's 10th Problem, it has been of great interest to decide the validity of quantified expressions of the form
\begin{equation}
\label{quant}
Q_1 \, x_1 \in R \ \cdots \ Q_k \, x_k \in R \, : \, C(x),
\end{equation}
where $Q_1,\ldots,Q_k \in \{\forall ,\exists \}$, $R \in \{\R,\Z\}$, and where the unquantified variables $x_{k+1},\ldots,x_d$ are usually called \emph{free} variables.
If in every fragment $Q_i \, x_i \in R$ in~\eqref{quant} the ring $R = \R$, we say that we consider a \emph{real} quantified expression; if $R = \Z$ in every fragment we call the expression \emph{integer}, and if both cases occur we call it a \emph{mixed-integer} quantified expression.

A very successful approach is to investigate whether a corresponding expression~\eqref{quant} admits \emph{quantifier elimination}, which means that there is an algorithm that constructs a $\BCPI$ $D(y)$ that is equivalent to~\eqref{quant}.
The presumably first result in this direction is what is nowadays called Fourier-Motzkin elimination in linear programming, establishing that every real quantified expression for $\BCLI$ admits quantifier elimination.
In particular, every such expression is equivalent to another $\BCLI$ (cf.~Schrijver~\cite[\S12.2]{schrijver1986theory}).
Much more generally, Tarski~\cite{tarski1951adecision} showed that every real quantified expression for $\BCPI$ is decidable.
By now there exist various improvements of Tarski's result, determining asymptotically fastest possible quantifier elimination algorithms for real quantified expressions for $\BCPI$ (cf.~Basu, Pollack \& Roy~\cite{basupollackroy2006algorithms}).
Regarding the case of pure integer quantifications, a landmark result is the decidability of Presburger arithmetic, meaning that every integer quantified expression for $\BCLI$ admits quantifier elimination (cf.~Presburger's original work~\cite{presburger1929vollstaendigkeit} and the excellent survey article by Haase~\cite{haase2018survival}).
On the negative side, Jeroslow~\cite{jeroslow1973therecannot} showed that Quadratic Integer Programming is undecidable.

In view of these results one may ask whether every mixed-integer quantified expression for $\BCLI$ is decidable, or if such formulas even admit quantifier elimination.
Liberti~\cite{liberti2019undecidability} discusses undecidability of general mixed-integer \emph{nonlinear} programming in great detail.
However, to the best of our knowledge the above question is not settled.

\begin{problem*}
\label{prob:mixed-integer-QE}
Let $C(x)$ be a $\BCLI$ and consider the mixed-integer quantified expression
\[
Q_1 \, x_1 \in R_1 \ \cdots \ Q_k \, x_k \in R_k \, : \, C(x),
\]
where $Q_1,\ldots,Q_k \in \{\forall ,\exists \}$ and $R_1,\ldots,R_k \in \{\R,\Z\}$.
Does there exist an algorithm that constructs a $\BCPI$ (or even a $\BCLI$) $D(y)$ equivalent to it?
\end{problem*}

%
%

Motivated by an application to determining $\rc(X,X \cup Y)$ for specially structured infinite sets~$Y$, we solve the most basic instance of this problem, where only one inner variable is allowed to be quantified over the integers.
Our proof shows that we indeed achieve quantifier elimination in this case.

\begin{theorem}
\label{BCLI:one:int}
Let $C(y,z)$ be a $\BCLI$ in $k+1$ variables $(y,z) \in \R^k \times \R$. There is an algorithm that decides the validity of the quantified statement
\begin{equation}
\label{eqn:quant-one-inner}
	\exists \, y \in \R^k \ \forall z \in \Z \,:\, C(y,z). 
\end{equation}
\end{theorem}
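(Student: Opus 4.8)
The plan is to eliminate the quantifiers from the inside out, starting with the integer quantifier $\forall z \in \Z$, and reduce the statement to a $\BCLI$ in the free variable $y$ alone, whose feasibility over $\R^k$ can then be decided by Fourier--Motzkin elimination (equivalently, linear programming). The key technical obstacle is that, for fixed $y$, the condition $\forall z \in \Z : C(y,z)$ is a statement about \emph{integer} values of $z$, not real ones, so I cannot simply apply Fourier--Motzkin to the inner variable. The heart of the argument is therefore to understand, for a fixed $y$, the set of real $z$ for which $C(y,z)$ holds, and then to translate ``$C(y,z)$ holds for all integers $z$'' into a $\BCLI$ in~$y$.

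First I would analyze the structure of $C(y,z)$ as a function of $z$ for fixed $y$. Since $C$ is a Boolean combination of linear inequalities in $(y,z)$, each atomic inequality has the form $a^\intercal y + c\,z \triangleright 0$. For fixed $y$, each such atom is a condition on $z$ of one of the forms $z \ge \tau_i(y)$, $z \le \tau_i(y)$, $z = \tau_i(y)$, or a trivial (constant) condition, where $\tau_i(y) = -a_i^\intercal y / c_i$ is an affine-linear function of $y$ (when $c_i \neq 0$). Thus the truth value of $C(y,z)$, as $z$ ranges over $\R$, changes only at the finitely many breakpoints $\tau_1(y),\ldots,\tau_r(y)$. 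Consequently the solution set $\{z \in \R : C(y,z)\}$ is a finite union of intervals (open, closed, half-open, points, or rays) whose endpoints are among the $\tau_i(y)$.

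The central step is then to express ``every integer lies in this union of intervals'' as a $\BCLI$ in $y$. The natural case distinction is on the \emph{ordering} of the breakpoints $\tau_i(y)$: there are only finitely many possible orderings (weak orders) of $r$ affine functions, and each ordering is itself defined by a conjunction of linear inequalities $\tau_i(y) \triangleright \tau_j(y)$ in~$y$, hence a $\BCLI$. Within a fixed ordering, the combinatorial type of the union of intervals is constant, so I can determine whether $\forall z \in \Z : C(y,z)$ holds by a finite check. The crucial observation making this finite is that an interval $[\tau_i(y), \tau_j(y)]$ contains no integer precisely when its length $\tau_j(y) - \tau_i(y) < 1$ together with the appropriate endpoint conditions; more importantly, an omitted \emph{bounded} gap between two consecutive covered intervals contains an integer unless its length is less than~$1$ and it avoids the integers in a way that can be captured by linear conditions on the fractional positions. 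The genuinely delicate point --- and the one I expect to be the main obstacle --- is handling these ``gap of length less than one'' conditions: whether such a short gap avoids all integers depends on the fractional parts of the endpoints, which is \emph{not} a linear condition in~$y$. I would resolve this by noting that if the complement of the covered set contains any bounded gap of length $\ge 1$, or any unbounded ray, then some integer is omitted outright; otherwise every gap has length $< 1$, and I can shift the whole configuration by translating in $z$ (using that $C(y,z)$ and the integrality of $z$ interact only through $z \bmod 1$) to argue that either all gaps can simultaneously avoid $\Z$ or they cannot, reducing again to linear conditions on the pairwise differences $\tau_i(y)-\tau_j(y)$ and their comparison with integers.

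Putting this together, for each of the finitely many orderings of the breakpoints I obtain a $\BCLI$ in $y$ describing when $\forall z \in \Z : C(y,z)$ holds under that ordering; taking the disjunction over all orderings yields a single $\BCLI$ $D(y)$ equivalent to $\forall z \in \Z : C(y,z)$. This achieves quantifier elimination of the integer quantifier. Finally, deciding $\exists y \in \R^k : D(y)$ is deciding feasibility of a real $\BCLI$, which is handled by Fourier--Motzkin elimination as recalled in the excerpt, and is decidable. I expect the bulk of the work --- and the only part requiring real care --- to be the bookkeeping of the interval-union combinatorics and, as noted, the treatment of short gaps where fractional parts of the affine functions $\tau_i(y)$ threaten to introduce non-linear conditions; the rest is a routine assembly of finitely many $\BCLI$ fragments.
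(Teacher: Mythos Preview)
Your plan has a genuine gap at exactly the point you yourself flag as delicate, and the proposed fix does not work. You aim to eliminate the integer quantifier first and obtain a $\BCLI$ $D(y)$ in the real variables alone, equivalent to $\forall z\in\Z:C(y,z)$. But such a $D(y)$ need not exist. Take $k=1$ and
\[
C(y,z)\;=\;(z\le y)\ \vee\ (z\ge y+\tfrac12).
\]
For fixed $y$ the complement is the open interval $(y,y+\tfrac12)$, and $\forall z\in\Z:C(y,z)$ holds precisely when this interval misses $\Z$, i.e.\ when the fractional part of $y$ lies in $[0,\tfrac12]$. The set $\{y\in\R:\forall z\in\Z:C(y,z)\}=\bigcup_{n\in\Z}[n,n+\tfrac12]$ is an infinite union of disjoint intervals and hence is not definable by any $\BCLI$ in~$y$. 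So no amount of bookkeeping over orderings of the breakpoints $\tau_i(y)$ can produce the promised $D(y)$; the ``short gap'' obstruction is not a technicality but a genuine failure of quantifier elimination in this direction. Your suggested remedy---translating in $z$ and reducing to linear conditions on the differences $\tau_i(y)-\tau_j(y)$---cannot help, since in the example all such differences are constants and yet the truth set is still periodic and non-$\BCLI$.

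The paper avoids this by \emph{not} attempting to eliminate the integer quantifier down to a real $\BCLI$. Instead it brings $C(y,z)$ into DNF so that, for each fixed $y$, the truth set in $z$ is a finite union of closed intervals $S_1(y),\ldots,S_s(y)$, and then expresses the covering condition $\bigcup_j S_j(y)\supseteq\Z$ \emph{existentially over integers}: one guesses a left-infinite interval, a chain of finite intervals, and a right-infinite interval, together with integer witnesses $v_1,\ldots,v_n$ at which consecutive intervals hand off. This rewrites the original sentence as $\exists\,y\in\R^k\ \exists\,v\in\Z^n:D(y,v)$ for a $\BCLI$ $D$. Now the \emph{real} quantifiers are eliminated first by Fourier--Motzkin, leaving a purely integer existential $\BCLI$ $E(v)$, which is decided by integer linear programming. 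The key difference from your plan is the order of elimination and the introduction of auxiliary integer witnesses; pushing the integer content outward rather than trying to absorb it into a real formula is what makes the argument go through.
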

\begin{proof}
The main idea is to reformulate the statement as
\begin{equation}
\label{eqn:quant-one-inner-equiv}
	\exists \, (y,u) \in \R^k \times \R^m \ \exists \, v \in \Z^n \, : \, D(u,y,v),
\end{equation}
for some $\BCLI$ $D(u,y,v)$.
Once this is achieved, we can reorder the existential quantifiers
\[
	\exists \, v  \in \Z^n \ \exists \, (y,u) \in \R^k \times \R^m \, :\, D(u,y,v),
\]
then eliminate the quantifiers over real variables by Fourier-Motzkin elimination, and obtain a formula 
\[
	\exists \, v \in \Z^n \, :\, E(v),
\]
for some $\BCLI$ $E(v)$.
In this latter formula we then bring $E(v)$ into a disjunctive normal form and convert all inequalities into the form $\ge $.
Decidability of integer linear programming (cf.~Borosh \& Treybig~\cite{boroshtreybig1976bounds} or~Schrijver~\cite[Ch.~17 \& 18]{schrijver1986theory}) finally shows decidability of the equivalent original quantified statement~\eqref{eqn:quant-one-inner}.
	
For constructing a $\BCLI$ $D(u,y,v)$ such that~\eqref{eqn:quant-one-inner} is equivalent to~\eqref{eqn:quant-one-inner-equiv} we proceed as follows:
We first write $C(y,z)$ as the disjunction $C(y,z) = \bigvee_{j=1}^s C_j(y,z)$, where each $C_j(y,z)$ is a conjunction of inequalities.
Each inequality involved in $C(y,z)$ is either an inequality that depends only on~$y$, or it involves $z$ and can be written as either a lower or an upper bound on~$z$: $z \ge l(y)$ or $z > l(y)$ or $z \le u(y)$ or $z \le u(y)$. 
We want to make the setting uniform by getting rid of the strict inequalities involving~$z$. If we have a strict inequality $z  > l(y)$ we can rewrite it as a non-strict one $z \ge u_i + l(y)$ using an additional real variable $u_i \in \R$ satisfying the positivity constraint $u_i > 0$.

By this modification, we replace $C(y,z)$ by an equivalent $\BCLI$ $D(u,y,z)$, given as the disjunction $\bigvee_{j=1}^s D_j(u,y,z)$, where each $D_j(u,y,z)$ is a conjuction of inequalities, in which each inequality either involves only $(y,u)$, or is an upper bound inequality $z \le U(u,y)$, or a lower bound inequality $z \ge L(u,y)$ on~$z$.
When we fix $(y,u) \in \R^k \times \R^m$ the geometric situation is as follows: Each $D_j(u,y,z)$ defines the set $S_j(u,y):=\setcond{ t \in \R }{D_j(u,y,t)}$, which is a closed interval (possibly equal to the whole $\R$ or empty, in degenerate situations).
Our formula~\eqref{eqn:quant-one-inner} is now equivalent to
\begin{equation}
\label{eqn:quant-one-inner-final}
	\exists \, (y,u) \in \R^k \times \R^m \, : \, \left( \bigcup_{j=1}^s S_j(u,y) \supseteq \Z \right).
\end{equation}
It remains to phrase the condition $\bigcup_{j=1}^s S_j(u,y) \supseteq \Z$ as a purely existential integer quantified statement.
How do we phrase that a family of intervals covers the integers?
If some of $D_j(u,y,z)$ does not depend on $z$, then the validity of $D_j(u,y,z)$ means that $S_j(u,y) = \R$ so that $S_j(u,y) \supseteq \Z$.
If there is no such $D_j(u,y,z)$, then there must be an interval infinite to the left, and an interval infinite to the right, which together cover all but finitely many points in~$\Z$.
The remaining points are covered by the remaining intervals.

Assume $S_{j_1}(u,y)$ is the interval infinite to the left.
We can pick the maximal integer value $v_1 \in \Z$ in that interval, the successor $v_1 + 1$ will be covered by some other interval $S_{j_2}(u,y)$.
If the interval $S_{j_2}(u,y)$ is finite, there is a maximal integer value $v_2 \in \Z$ in that interval, whose successor $v_2 + 1$ will be covered by some third interval.
Repeating this process, one eventually reaches the last finite interval, with maximal integer value~$v_n$, say.
Its successor $v_n+1$ will be covered by some interval $S_{j_{n+1}}(u,y)$, which is infinite to the right.
All this is summarized as the formula $D(u,y,v)$ defined by
\[
D_{j_1}(u,y,v_1) \wedge \left( \bigwedge_{i=2}^n D_{j_i}(u,y,v_{i-1} +1) \wedge D_{j_i}(u,y,v_i) \right) \wedge D_{j_{n+1}}(u,y,v_n+1),
\]
which is valid for some $n < s$ and some $j_1,\ldots,j_{n+1} \in \{1,\ldots,s\}$.
Moreover, $D_{j_1}(u,y,v_1)$ contains only upper bounds on $z$, $D_{j_{n+1}}(u,y,v_n+1)$ contains only lower bounds on $z$, while $D_{j_i}(u,y,v_{i-1}+1)$ and $D_{j_i}(u,y,v_i)$, with $1 < i \le n$, contain both lower and upper bounds on $z$.

We thus proved that $\bigcup_{j=1}^s S_j(u,y) \supseteq \Z$ is equivalent to the expression $\exists \, v \in \Z^n : D(u,y,v)$, which finishes the proof.
\end{proof}

We now obtain our desired application to determining $\rc(X,X \cup Y)$ for specially structured infinite sets~$Y$.
	
\begin{corollary}
\label{cor:rc-special-Y}
	Let $k \in \N$, let $X \subseteq \Z^d$ be a finite lattice-convex set, and let $Y \subseteq \Z^d \setminus X$ be of the form $Y = Y_0 \cup (L_1 \cap \Z^d) \cup \ldots \cup (L_m \cap \Z^d)$, where~$Y_0$ is finite and $L_1,\ldots,L_m$ are lines, each containing a lattice point~$p_i$, and all being parallel to some common primitive vector $u \in \Z^d \setminus \{0\}$.
	Then, there is an algorithm that decides whether there is a system of $k$ linear inequalities $f_1(x) \ge 0,\ldots, f_k(x) \ge 0$, that is satisfied for every $x \in X$ and is not satisfied for every $y \in Y$.
	
	In particular, for given $X$ and $Y$ as above, there is an algorithm that determines $\rc(X,X \cup Y)$. 
\end{corollary}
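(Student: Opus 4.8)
The plan is to reduce the existence of a separating system of $k$ inequalities to finitely many instances of the one-variable mixed-integer quantified statement handled by Theorem~\ref{BCLI:one:int}. The key observation is that a system $f_1(x) \ge 0, \ldots, f_k(x) \ge 0$ separates $X$ from $Y$ precisely when two conditions hold: every $x \in X$ satisfies all $k$ inequalities, and every $y \in Y$ violates at least one of them. The coefficients of the affine functions $f_1,\ldots,f_k$ are the \emph{free} real variables to be chosen, say $y \in \R^{k(d+1)}$ after vectorizing all coefficients and constants. Since $X$ is finite, the condition ``$f_i(x) \ge 0$ for all $x \in X$ and all $i$'' is a finite conjunction of linear inequalities in these coefficient variables, hence a $\BCLI$ in~$y$. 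The finite part $Y_0$ of $Y$ contributes, for each $y_0 \in Y_0$, the disjunction $\bigvee_{i=1}^k (f_i(y_0) < 0)$, again a $\BCLI$ in the coefficients.

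The crux is handling the infinite lines $L_1,\ldots,L_m$. First I would parametrize each line as $L_j = \setcond{p_j + z\,u}{z \in \R}$ and note that $L_j \cap \Z^d = \setcond{p_j + z\, u}{z \in \Z}$, because $u$ is primitive and $p_j$ is a lattice point on $L_j$. The requirement that \emph{every} lattice point on $L_j$ be separated from $X$ becomes: for all $z \in \Z$, the point $p_j + z\,u$ violates at least one of the $k$ inequalities. Substituting $x = p_j + z\,u$ into each $f_i$ yields an affine function of $(y,z)$, since $f_i$ is affine in $x$ and $x$ is affine in $z$ with coefficients depending on the fixed data $p_j, u$. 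Thus the separation condition along $L_j$ reads $\forall z \in \Z : C_j(y,z)$, where $C_j(y,z) := \bigvee_{i=1}^k \bigl(f_i(p_j + z\,u) < 0\bigr)$ is a $\BCLI$ in the $k(d+1)$ coefficient variables $y$ together with the single extra real variable $z$. Conjoining over all $j \in [m]$ and combining with the finite conditions, the existence of a separating system becomes
\[
\exists\, y \in \R^{k(d+1)} \ \forall z \in \Z \,:\, C(y,z),
\]
where $C(y,z)$ is a single $\BCLI$ obtained by conjoining the $X$-validity constraints, the $Y_0$-violation constraints, and the $\bigwedge_{j=1}^m C_j(y,z)$ line-violation constraints. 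I would take care to use one common inner integer variable $z$: since all lines share the direction $u$, the quantifier $\forall z \in \Z$ ranging over a single integer suffices, with the distinct $p_j$ entering only as different affine shifts inside the Boolean combination.

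With the problem recast in exactly the shape of~\eqref{eqn:quant-one-inner}, namely one block of existentially quantified real variables followed by a single universally quantified integer variable applied to a $\BCLI$, Theorem~\ref{BCLI:one:int} directly provides an algorithm deciding its validity. This settles the first assertion. For the final claim, I would observe that $\rc(X, X \cup Y)$ is the smallest $k$ for which such a separating system exists; running the decision procedure for $k = 1, 2, 3, \ldots$ and returning the first $k$ that succeeds computes this value, provided we know termination is guaranteed. Termination follows because $\rc(X, X \cup Y)$ is finite: since $X$ is lattice-convex, it admits a relaxation within $\Z^d$, hence in particular separates $X$ from the subset $Y \subseteq \Z^d \setminus X$ with finitely many inequalities. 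The main obstacle is the bookkeeping of the reduction rather than any deep argument: one must verify that substituting the line parametrization keeps every constraint affine in the joint variables $(y,z)$, and that the universal-over-$\Z$ formulation with a shared integer variable $z$ faithfully captures separation along all lines simultaneously. Once that is in place, Theorem~\ref{BCLI:one:int} does the heavy lifting.
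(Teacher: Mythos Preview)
There is a genuine gap in your reduction: the substitution $x = p_j + z\,u$ does \emph{not} produce a $\BCLI$ in the joint variables $(y,z)$. Writing $f_i(x) = a_i^\intercal x + b_i$ with the coefficient vector $a_i$ and constant $b_i$ among your free real variables~$y$, you get
\[
f_i(p_j + z\,u) \;=\; a_i^\intercal p_j + b_i + (a_i^\intercal u)\,z,
\]
and the term $(a_i^\intercal u)\,z$ is a product of the unknown real quantity $a_i^\intercal u$ with the integer variable~$z$. Your argument ``$f_i$ is affine in $x$ and $x$ is affine in $z$'' shows the result is affine in $z$ for \emph{fixed} $y$, but the slope with respect to $z$ is itself a linear function of $y$, so the expression is bilinear, not affine, in $(y,z)$. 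Hence $C(y,z)$ is not a $\BCLI$ and Theorem~\ref{BCLI:one:int} does not apply directly. You flagged exactly this verification step at the end, but it is the one step that fails.

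The paper resolves this by a normalization trick: since each $f_i$ can be rescaled by a positive constant, one may force $a_i^\intercal u \in \{-1,0,1\}$. Fixing a sign vector $\epsilon = (\epsilon_1,\ldots,\epsilon_k) \in \{-1,0,1\}^k$ and adding the linear equalities $a_i^\intercal u = \epsilon_i$ to the constraints turns $(a_i^\intercal u)\,z$ into the constant-times-$z$ term $\epsilon_i z$, so the substituted inequality is now genuinely affine in $(y,z)$. One then runs the decision procedure of Theorem~\ref{BCLI:one:int} once for each of the $3^k$ choices of~$\epsilon$, and the answer is affirmative if and only if some choice succeeds. Apart from this linearization step, the rest of your outline (shared integer variable $z$, finite conjunctions for $X$ and $Y_0$, linear search over $k$ for $\rc(X,X\cup Y)$) matches the paper.
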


\begin{proof}
The lattice points of $L_i$ are parametrized as $p_i + z u$ with $z \in \Z$.
A priori, the expression $f_j(p_i + z u)$ that will show up in our considerations, is non-linear when we view $f_j$ as a variable vector (in the vector space of affine functions on $\R^d$) and $z$ as an integer variable.
When we write $f_j(x) = a_j^\intercal x + b_j$, for some $a_j \in \R^d$ and $b_j \in \R$, we have
\[
f_j(p_i + z u) = a_j^\intercal (p_i + z u) + b_j = (a_j^\intercal u) z + a_j^\intercal p_i + b_j.
\]
At this point, by rescaling~$f_j$, that is, rescaling the vector $(a_j^\intercal, b_j) \in \R^{d+1}$, we can always assume that $a_j^\intercal u = \epsilon_j$, where $\epsilon_j \in \{-1,0,1\}$, is a linear equality in $a_j$.
The three choices for each $\epsilon_j$ produce $3^k$ choices for $\epsilon=(\epsilon_1,\ldots,\epsilon_k)$.
For a given choice, the expression $f_j(p_i + zu) = \epsilon_j z + a_j^\intercal p_i + b_j$ is linear in the variables $z, a_j, b_j$.
Therefore, for a fixed $\epsilon \in \{-1,0,1\}^k$, we can model the statement that there is a system of $k$ linear inequalities $f_1(x) \ge 0,\ldots, f_k(x) \ge 0$, that is satisfied for every $x \in X$ and is not satisfied for every $y \in Y$, by the following mixed-integer quantified expression:
\begin{align}
\exists \, a \in \R^{d \times k} \ \exists \, b \in \R^k \ \forall z \in \Z \,:\, C_\epsilon(a,b,z),\label{eqn:rc-expression}
\end{align}
where $a = (a_1,\ldots,a_k)$ and $b = (b_1,\ldots,b_k)^\intercal$.
The $\BCLI$ $C_\epsilon(a,b,z)$ appearing in this expression is defined by
\[
C_\epsilon(a,b,z) = C_X(a,b) \wedge C_0(a,b) \wedge \bigwedge_{i=1}^m C_i^\epsilon(a,b,z),
\]
with the following constituents:
\[
C_X(a,b) = \bigwedge_{x \in X} \left( (f_1(x) \geq 0) \wedge \ldots \wedge (f_k(x) \geq 0) \right)
\]
and
\[
C_0(a,b) = \bigwedge_{y \in Y_0} \neg \left( (f_1(y) \geq 0) \wedge \ldots \wedge (f_k(y) \geq 0) \right),
\]
model that the linear system is satisfied by all $x \in X$, and by none of the $y \in Y_0$, respectively.
And, for each $i \in [m]$, the $\BCLI$
\begin{align*}
C_i^\epsilon(a,b,z) &= \neg \left( (f_1(p_i + z u) \geq 0) \wedge \ldots \wedge (f_k(p_i + z u) \geq 0) \right) \\
&= \neg \left( (\epsilon_1 z + a_1^\intercal p_i + b_1 \geq 0) \wedge \ldots \wedge (\epsilon_k z + a_k^\intercal p_i + b_k \geq 0) \right),
\end{align*}
models that the linear system is not satisfied for the lattice point $p_i + z u$ on the line~$L_i$.

Since the expression~\eqref{eqn:rc-expression} has the right form to apply Theorem~\ref{BCLI:one:int}, we obtain decidability of the representation of $X$ within $X \cup Y$ with $k$ linear inequalities, by considering $3^k$ such expressions, one for each $\epsilon \in \{-1,0,1\}^k$.
By a standard binary search, we can thus compute $\rc(X,X \cup Y)$.
\end{proof}

\subsection{Computing finite sets of observers}

A set $\cS \subseteq \Z^d$ of lattice points is \emph{Presburger definable} if there exists a $\BCLI$ $C(x)$ in~$d$ variables $x = (x_1,\ldots,x_d)$ such that $\cS = \setcond{x \in \Z^d}{C(x)}$.
Given a lattice point $a \in \Z^d$ and some vectors $v_1,\ldots,v_\ell \in \Z^d$, the set $a + \setcond{\lambda_1 v_1 + \ldots + \lambda_\ell v_\ell}{\lambda_1,\ldots,\lambda_\ell \in \Z_{\geq0}}$ is called an \emph{elementary set}.
Such sets are easily seen to be Presburger definable and an elegant result of Ginsburg \& Spanier~\cite{ginsburgspanier1964bounded} (cf.~\cite[Sect.~4]{haase2018survival}) shows that Presburger definable subsets of~$\Z^d$ are precisely the finite unions of elementary sets.
Note also that the complement of a Presburger definable set $\cS = \setcond{x \in \Z^d}{C(x)}$ is given by $\Z^d \setminus \cS = \setcond{x \in \Z^d}{\neg\,  C(x)}$, and is therefore itself Presburger definable.

%

With these preparations we can now formulate and prove a result that was already hinted at in the end of Section~\ref{sect:observers}.
This result will also complete the proof of Theorem~\ref{thm:main-conds}\ref{large:width}.

\begin{theorem}
\label{thm:finite-obs-computability}
If a finite lattice-convex set $X \subseteq \Z^d$ is such that $\obs(X)$ is finite, then there is an algorithm that computes $\obs(X)$ and $\rc(X) = \rc_\Q(X)$.
\end{theorem}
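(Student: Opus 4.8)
The plan is to turn the abstract finiteness hypothesis on $\obs(X)$ into an \emph{effective} computation by exploiting the decidability of Presburger arithmetic. The key observation is that the defining condition for a point $y \in \Z^d \setminus X$ to observe $X$, namely $\conv(X \cup \{y\}) \cap \Z^d = X \cup \{y\}$, can be encoded as a $\BCLI$ in the single variable~$y$. First I would show that $\obs(X)$ is a Presburger definable set. To see this, note that for fixed finite $X$, the condition that $y$ observes $X$ says that no lattice point other than those in $X \cup \{y\}$ lies in $\conv(X \cup \{y\})$. Since $X$ is finite, membership of an arbitrary lattice point $w$ in $\conv(X \cup \{y\})$ is expressible using an existential quantifier over the barycentric coefficients; but to stay within $\BCLI$ I would instead use the fact that $w \in \conv(X \cup \{y\})$ can be rephrased, via a quantified statement over the rationals (eliminable by Fourier--Motzkin / the decidability of Presburger arithmetic for the integer variables), as a first-order formula in $(w,y)$ over the structure $(\Z; +, \le)$. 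Then ``$y$ observes $X$'' becomes $y \notin X \wedge \forall\, w \in \Z^d : \bigl(w \in \conv(X\cup\{y\}) \Rightarrow w \in X \cup \{y\}\bigr)$, which is a Presburger formula in~$y$. Hence $\obs(X)$ is Presburger definable.

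By the Ginsburg--Spanier theorem quoted just above, a Presburger definable set is a finite union of elementary sets $a + \{\lambda_1 v_1 + \ldots + \lambda_\ell v_\ell : \lambda_i \in \Z_{\ge 0}\}$, and such a representation can be computed from the defining formula. The next step is to use the hypothesis that $\obs(X)$ is \emph{finite}: an elementary set is finite if and only if it contains no generating ray, i.e.\ all its generators $v_i$ must be zero (or the set degenerates to the single point $a$). Once a semilinear representation is computed, finiteness is decidable by inspecting each elementary piece, and in the finite case every piece is a singleton, so we can simply enumerate the finitely many points of $\obs(X)$ explicitly. This yields the algorithm computing $\obs(X)$.

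With $\obs(X)$ in hand as an explicit finite set, I would invoke Proposition~\ref{prop:obs-observations:XuObs}, which gives $\rc(X) = \rc(X, X \cup \obs(X))$ and $\rc_\Q(X) = \rc_\Q(X, X \cup \obs(X))$, reducing the relaxation complexity to a separation problem between the two finite sets $X$ and $\obs(X)$. Corollary~\ref{cor:X-Y-finite-computability} (equivalently, Proposition~\ref{prop:milp}) then shows both that $\rc(X, X\cup\obs(X)) = \rc_\Q(X, X\cup\obs(X))$ and that this common value is algorithmically computable via the MILP $\SEPMILP$, run for increasing $k$ until a strictly positive optimum is found. Combining these equalities gives $\rc(X) = \rc_\Q(X)$ together with its computability, as claimed.

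I expect the main obstacle to be the first step: carefully verifying that ``$y$ observes $X$'' is genuinely a Presburger (i.e.\ $\BCLI$-definable over $\Z$) condition in $y$. The subtlety is that membership $w \in \conv(X \cup \{y\})$ involves the variable point $y$ as one of the vertices, so the convex hull itself varies with $y$; one must check that eliminating the rational barycentric coordinates still leaves a first-order formula over $(\Z;+,\le)$ with $y$ appearing only linearly, and that the nested universal quantifier over $w \in \Z^d$ is admissible within Presburger arithmetic (it is, since Presburger arithmetic admits full quantifier elimination). The extraction of a computable semilinear representation from the Ginsburg--Spanier theorem and the reading-off of finiteness from it are then routine, and the passage to the separation MILP is immediate from the already-established results.
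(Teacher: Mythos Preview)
Your proposal differs from the paper's proof and has a genuine gap at the step you yourself flagged as the main obstacle.

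The claim that ``$y$ observes $X$'' is Presburger definable via the route you describe does not go through. After eliminating the real barycentric coefficients from ``$w \in \conv(X \cup \{y\})$'', the resulting formula in $(w,y)$ is \emph{not} a $\BCLI$: in dimension $d \ge 2$, the facets of $\conv(X \cup \{y\})$ that contain~$y$ are spanned by~$y$ together with $d-1$ points of~$X$, and the corresponding facet inequality evaluated at~$w$ is a $d \times d$ determinant with one row equal to (an affine image of)~$y$ and the evaluation point~$w$ entering linearly as well. This produces \emph{bilinear} terms such as $w_i y_j$. For instance, with $X = \{0,e_1,e_2\} \subseteq \Z^2$ and $y$ beyond the edge $[e_1,e_2]$, the halfspace bounded by the line through $e_1$ and $y$ reads $(w_1-1)y_2 - w_2(y_1-1) \le 0$, which is not linear in $(w,y)$. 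Thus the inner predicate lies outside Presburger arithmetic, and you cannot invoke Presburger quantifier elimination on the $\forall w$ quantifier. Whether $\obs(X)$ is nevertheless always semilinear is a separate (and, as far as I can see, nontrivial) question that your argument does not address.

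The paper avoids this difficulty by \emph{not} trying to define $\obs(X)$ by a single formula. Instead it runs an iterative search (Algorithm~\ref{algo:observers}): starting from $\cS = X$, it repeatedly picks some $q \in \Z^d \setminus \cS$, descends along lattice points of $\conv(X \cup \{q\})$ until it reaches an actual observer, then adds to $\cS$ the entire cone $C_q = q + \operatorname{cone}\{q-x : x \in X\}$, which contains no observer other than~$q$. At every step $q$ is a \emph{concrete} lattice point, so $\conv(X \cup \{q\})$ is a fixed rational polytope and the required lattice-point queries are ordinary ILP feasibility problems; Presburger arithmetic is invoked only to decide $\exists q \in \Z^d \setminus \cS$ for the explicitly maintained semilinear set~$\cS$. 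Termination uses the finiteness of $\obs(X)$ because each pass through the loop discovers a new observer. The final reduction to $\SEPMILP$ is the same as in your last paragraph.
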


\IncMargin{1em}
\begin{algorithm}[ht]
 \DontPrintSemicolon
 \SetKwInOut{Input}{Input}\SetKwInOut{Output}{Output\,}
 \SetKwData{GoTo}{go to}

 \Input{A lattice-convex set $X \subseteq \Z^d$ for which $\obs(X)$ is finite.}
 \Output{$\obs(X)$}
 \BlankLine
 $\cS := X$ and $\cO := \emptyset$\;\nllabel{line:init}
 \While{$\exists\, q \in \Z^d \setminus \cS$}{\nllabel{line:while}
  $Q_q := \conv(X \cup \{q\})$\;\nllabel{line:Qq}
  \eIf{$\exists\, p \in Q_q \cap \Z^d \setminus (X \cup \{q\})$}{\nllabel{line:if}
   $q := p$\;
   \GoTo~\ref{line:Qq}\;\nllabel{line:goto}
   }{
   $\cO := \cO \cup \{q\}$ \tcp*[r]{$q$ is an observer of $X$}
   $C_q := q + \setcond{\sum_{x \in X} \lambda_x(q-x)}{\lambda_x \geq 0, x \in X}$\;\nllabel{line:cone}
   compute a facet description $C_q = \setcond{y \in \R^d}{a_i^\intercal y \leq b_i, 1 \leq i \leq m}$, where $a_i \in \Z^d$ and $b_i \in \Z$, for $1 \leq i \leq m$\;
   $\cS := \cS \cup (C_q \cap \Z^d)$\;\nllabel{line:augmentS}
  }
 }\nllabel{line:endwhile}
 \Return{$\cO$}\nllabel{line:return}
 \caption{Computing the set $\obs(X)$ of observers.}
 \label{algo:observers}
\end{algorithm}
\DecMargin{1em}

\begin{proof}
It suffices to show that we can compute the set $\obs(X)$, because if this is done then Corollary~\ref{cor:X-Y-finite-computability} implies that $\rc(X) = \rc_\Q(X)$ and that this number is computable. 
A pseudocode of our algorithm to compute $\obs(X)$ is given in Algorithm~\ref{algo:observers}.

We first argue that Algorithm~\ref{algo:observers} is correct:
Observe that in every stage of the algorithm the set~$\cS$ consists of those lattice points for which we have currently decided whether they belong to the set of observers $\obs(X)$.
At the initialization step in Line~\ref{line:init} this is clear, because none of the points in~$X$ is an observer.
If a point $q \in \Z^d \setminus \cS$ is found, then $Q_q$ may or may not contain lattice points besides $X \cup \{q\}$.
The loop through Lines~\ref{line:Qq}--\ref{line:goto} successively reduces $Q_q$ until $Q_q \cap \Z^d = X \cup \{q\}$, which means that $q \in \obs(X)$.
By definition of the cone $C_q$ in Line~\ref{line:cone}, $q$ is the only observer it contains.
Thus, we can safely disregard the other lattice points in $C_q$ and thus augment $\cS$ accordingly in Line~\ref{line:augmentS}.
This establishes the claimed invariance property of~$\cS$.
These arguments also show that each new iteration in the while loop through Lines~\ref{line:while}--\ref{line:endwhile} provides us with a new observer.
Since by assumption $\obs(X)$ is finite, this means that Algorithm~\ref{algo:observers} terminates after finitely many steps, and that indeed the returned set~$\cO$ in Line~\ref{line:return} equals~$\obs(X)$.

Finally, we need to make sure that the conditions in Lines~\ref{line:while} and~\ref{line:if} of Algorithm~\ref{algo:observers} are decidable, and that all occuring sets and points are computable.
Decidability of the two conditions hold since they are quantified expressions in Presburger arithmetic.
Indeed, at each stage of the algorithm, the set $\cS$ is a finite union of elementary sets: In Line~\ref{line:init}, $\cS = X$ which is a finite set and thus a finite union of elementary sets; and the augmentation in Line~\ref{line:augmentS} just expands~$\cS$ by the elementary set $C_q \cap \Z^d$.
Note also that the decision procedures for Presburger arithmetic provide us with solutions~$q$ and~$p$ in Lines~\ref{line:while} and~\ref{line:if}, respectively.
Thus, all the points and sets in Algorithm~\ref{algo:observers} are explicitly computable.
\end{proof}

\section{Computability of the relaxation complexity for \texorpdfstring{$d=3$}{d=3}}
\label{sect:d=3}
We are now well-prepared to demonstrate computability of the relaxation complexity of three-dimensional lattice-convex sets.
We rephrase Theorem~\ref{thm:main-low-dims}\ref{d=3} for the reader's convenience.

\begin{theorem}
\label{thm:computability-d-3}
For every full-dimensional finite lattice-convex set $X \subseteq \Z^3$, there is a finite algorithm that computes $\rc(X)$.
\end{theorem}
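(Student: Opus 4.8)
The plan is to reduce the computation of $\rc(X)$ for a full-dimensional $X \subseteq \Z^3$ to cases that have already been handled in the preceding sections, and to dispatch the one remaining case using the quantifier-elimination machinery of Section~\ref{sect:bcli}. The dichotomy is governed by whether $\obs(X)$ is finite or infinite. By Corollary~\ref{cor:X-Y-finite-computability} together with Theorem~\ref{thm:finite-obs-computability}, if $\obs(X)$ is finite, then $\rc(X)$ is already computable; so the heart of the matter is to understand the structure of $\obs(X)$ when it is infinite, and to show this structure is exactly the one that Corollary~\ref{cor:rc-special-Y} can digest. The first step is therefore a decision procedure: I would like to algorithmically determine whether $\obs(X)$ is finite or not, and in the infinite case, produce an explicit description of $\obs(X)$ of the form $Y_0 \cup (L_1 \cap \Z^3) \cup \ldots \cup (L_m \cap \Z^3)$ with $Y_0$ finite and the $L_i$ lines parallel to a common primitive direction $u \in \Z^3 \setminus \{0\}$.

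The key structural claim to establish is that for $d=3$, whenever $\obs(X)$ is infinite it has precisely this ``finite part plus finitely many parallel lattice lines'' shape. I would approach this via the geometry underlying Theorem~\ref{thm:non-hollow-finite-obs} and Theorem~\ref{thm:hollow-threshold-width}: since $X$ is full-dimensional in $\Z^3$, infinitude of $\obs(X)$ forces $\conv(X)$ to be lattice-free (otherwise Theorem~\ref{thm:non-hollow-finite-obs} gives finiteness), and forces the lattice-width to be small, namely $w(X) \le w^\infty(3) = 1$ (otherwise Theorem~\ref{thm:hollow-threshold-width} gives finiteness). A lattice-free, full-dimensional $3$-polytope of lattice-width at most $1$ is sandwiched between two consecutive lattice hyperplanes $u^\intercal x = c$ and $u^\intercal x = c+1$ for some primitive $u$. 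I expect that in this sandwiched regime an observer $y$ far from $\conv(X)$ must lie on a line in direction $u$, because the segment from $y$ to $\conv(X)$ can only avoid extra lattice points if it runs ``along the slab'', which heuristically pins the unbounded part of $\obs(X)$ to finitely many lattice lines parallel to $u$. Making this precise — that the observers decompose as a bounded region together with finitely many full lattice lines in the direction $u$ — is the crux.

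Once the structural description is in place, the algorithm assembles as follows. First I would try to certify finiteness of $\obs(X)$: run Algorithm~\ref{algo:observers} from Theorem~\ref{thm:finite-obs-computability}, but interleave it with a termination test detecting whether the search is producing observers escaping to infinity along a direction; concretely, I would compute $w(X)$ and test whether $\conv(X)$ is lattice-free, both of which are effectively decidable for an explicitly given finite $X$, to decide up front which branch we are in. In the finite branch, apply Theorem~\ref{thm:finite-obs-computability} directly. In the infinite branch, I would extract the slab direction $u$ and compute the candidate lines $L_1,\ldots,L_m$ and the finite remainder $Y_0$; the whole set $\obs(X)$ is Presburger definable, so its description as a finite union of elementary sets can be computed via Ginsburg--Spanier, and from that description the parallel-line structure can be read off. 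Finally, feed $X$ and $Y = \obs(X)$ into Corollary~\ref{cor:rc-special-Y}, which by construction handles exactly sets $Y$ of the form $Y_0 \cup (L_1 \cap \Z^d) \cup \ldots \cup (L_m \cap \Z^d)$ with lines parallel to a common $u$, and returns $\rc(X, X \cup \obs(X)) = \rc(X)$ by Proposition~\ref{prop:obs-observations:XuObs}.

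The main obstacle I anticipate is the structural lemma in the infinite case: proving that the unbounded part of $\obs(X)$ consists of \emph{finitely many} lattice lines all parallel to the slab direction $u$, and bounding the ``finite part'' $Y_0$ effectively. The finiteness-threshold arguments of Theorem~\ref{thm:hollow-threshold-width} only yield finiteness of $\obs(X)$ when $w(X) > w^\infty(d)$; they do not by themselves describe the shape of $\obs(X)$ when it is infinite, and they are non-constructive regarding the relevant volume bound. So I expect to need a direct lattice-point/convexity analysis of the two-lattice-hyperplane slab in $\R^3$, arguing that an observer with large $u^\intercal$-component is constrained to lie over a vertex or edge of $\conv(X)$ and thus on one of finitely many lines, while all other observers stay in a bounded box whose size is controlled by the diameter of $X$ and the slab width. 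Establishing both the parallel-line structure and an explicit, computable bound on $Y_0$ — so that Corollary~\ref{cor:rc-special-Y} is genuinely applicable with effectively given data — is where the real work lies.
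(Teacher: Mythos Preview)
Your high-level dichotomy (finite vs.\ infinite $\obs(X)$) is sound, and the finite branch is correctly handled by Theorem~\ref{thm:finite-obs-computability}. The gap is in the infinite branch: the structural claim you aim for --- that the unbounded part of $\obs(X)$ consists of finitely many lattice lines all parallel to a single primitive direction --- is \emph{false} in general, and this is exactly where the paper has to work hardest.

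Concretely, when $w(X)=1$ the paper classifies the lattice-width directions $u \in \D_X$ by the \emph{type} $(\dim(X \cap H_u^+),\dim(X \cap H_u^-))$. If some $u$ has type $(2,0)$ or $(1,1)$, one of the bounding planes $H_u^\pm$ meets $X$ in a single point or a segment, and the observers in that plane are \emph{not} confined to finitely many lines: in the type-$(2,0)$ case every lattice point of $H_u^+$ adjacent to the apex $x^\star$ is an observer, giving a genuinely two-dimensional scatter. The paper dispatches these cases directly (Corollary~\ref{cor:rc>=d+1:d<=4} gives $\rc(X)=4$ for type $(1,1)$; a pyramid lemma reduces type $(2,0)$ to the planar case). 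Even when all directions are of type $(2,1)$ or $(2,2)$, so that the observers do lie on finitely many lattice lines (Lemma~\ref{lem:structure-of-observers}), those lines can fail to be parallel: if two independent width directions $u,v$ of type $(2,1)$ have non-parallel edges $F(X,u),F(X,v)$, the associated lines point in different directions, and Corollary~\ref{cor:rc-special-Y} does not apply. The paper shows (Lemmas~\ref{lem:outsider} and~\ref{lem:outsider-rc}) that this forces $X$ into an explicit one-parameter family $X_{a,b}$ with $\rc(X_{a,b})=4$. Only after all of these exceptional cases are peeled off does one reach the situation your proposal envisions.

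Two smaller points: first, the lines containing the far-away observers lie \emph{inside} the bounding planes $H_u^\pm$, hence are orthogonal to the slab normal $u$, not parallel to it as you suggest. Second, your appeal to Ginsburg--Spanier presupposes that $\obs(X)$ is Presburger-definable, but the defining condition ``$\conv(X\cup\{y\})\cap\Z^3=X\cup\{y\}$'' is not linear in $y$ (the facet description of $\conv(X\cup\{y\})$ depends nonlinearly on $y$), so this route is not available without further argument.
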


Given a $d$-dimensional lattice-free lattice polytope $P$, there exists a lattice-free lattice polyhedron~$\cM$ with $P \subseteq \cM$ and such that $\cM$ is inclusion-maximal among all lattice-free lattice polyhedra (see~\cite{averkovwagnerweismantel2011maximal} and~\cite{nillziegler2011projecting}).
Moreover, the same sources show that there are only finitely many choices for $\cM$, up to unimodular equivalence.

In dimension three the possible choices for $\cM$ have been classified in the series of papers~\cite{averkovwagnerweismantel2011maximal} and~\cite{averkovkruempelmannweltge2017notions}:
Up to unimodular equivalence, these choices are the \emph{slab} $[0,1] \times \R^2$, the \emph{toblerone} $\cM_0 = \conv(\{0,2 e_1, 2 e_2\}) \times \R$, and $12$ bounded lattice polytopes $\cM_1,\ldots,\cM_{12}$ of volume at most $6$ and lattice-width at least~$2$.

Our proof of Theorem~\ref{thm:computability-d-3} is based on this classification and on a case distinction on the structural properties of the observers of the given lattice-convex set~$X$.
To this end, given an observer $p \in \obs(X)$, we write
\[
Q_p = \conv(X \cup \{p\}).
\]
Note that~$Q_p$ is a full-dimensional lattice polytope and that~$p$ is one of its vertices.
Further, we write $\D_X \subseteq \Z^d \setminus \{0\}$ for the set of \emph{lattice-width directions} of $X \subseteq \Z^d$, that is, $ u \in \Z^d \setminus \{0\}$ is contained in~$\D_X$ if and only if $w(X)=w(X,u)$.
The set $\D_X$ is the explicitly computable set 
\[
	\D_X = ( w(X) \cdot (X-X)^\star ) \cap \Z^d \setminus \{0\},
\]
where  $Y^\star := \setcond{x \in \R^d}{x^\intercal y \leq 1 \text{ for all } y \in Y}$ denotes the \emph{polar} of a set $Y \subseteq \R^d$.
Also, $\D_X$ has at most $3^d-1$ elements as shown in~\cite{draismamcallisternill2012lattice}.
Each such $u \in \D_X$ corresponds to a pair of parallel supporting lattice planes $H_u^+$ and~$H_u^-$ of~$\conv(X)$ with outer normals $u$ and $-u$, respectively.
We say that $u \in \D_X$ has \emph{type $(i,j)$} if $\dim(X \cap H_u^+) = i$ and $\dim(X \cap H_u^-) = j$, or vice versa.

With this notation, we can now describe the structure of the proof of Theorem~\ref{thm:computability-d-3}.
We distinguish different types of observers $p \in \obs(X)$.
In most of the cases we describe a finite and computable search space for the respective observer~$p$, which then allows us to use Corollary~\ref{cor:X-Y-finite-computability} and Proposition~\ref{prop:obs-observations:XuObs} to compute $\rc(X)$.
Only in the Cases 2.2.1, 2.2.2, and 2.2.3 we need to employ a different method to compute $\rc(X)$ either directly or by an alternative algorithm.
The cases are as follows:

\begin{enumerate}[label = \underline{Case~\arabic*:}, itemsep=5pt]

 \item $Q_p$ is not lattice-free

 \item $Q_p$ is lattice-free

 \begin{enumerate}[label = \underline{Case~2.\arabic*:}, leftmargin=20pt, itemsep=5pt, topsep=5pt]

  \item $w(Q_p) > 1$ 

   \begin{enumerate}[label = \underline{Case~2.1.\arabic*:}, leftmargin=20pt, itemsep=5pt, topsep=5pt]

    \item up to unimodular equivalence, $Q_p$ is contained in $\cM_0$ 

    \item up to unimodular equivalence, $Q_p$ is contained in one of the polytopes $\cM_1,\ldots,\cM_{12}$

   \end{enumerate}

  \item $w(Q_p) = 1$

   \begin{enumerate}[label = \underline{Case~2.2.\arabic*:}, leftmargin=20pt, itemsep=5pt, topsep=5pt]

    \item there exists a lattice-width direction of $X$ of type $(1,1)$

    \item there exists a lattice-width direction of $X$ of type $(2,0)$

    \item there exists a lattice-width direction of $X$ of type $(2,1)$

    \item every lattice-width direction of $X$ is of type $(2,2)$

   \end{enumerate}

 \end{enumerate}

\end{enumerate}

\noindent Let us now get into the details.

\subsection*{Details for Case 1}

Theorem~\ref{thm:non-hollow-finite-obs} and its proof show that if $Q_p$ is not lattice-free, then $p \in X + c_d \cdot \conv(X-X)$, with the constant~$c_d = d(2d+1)(s_{2d+1}-1)$.
Hence, the set
\[
\cS_1 = \left( X + c_3 \cdot \conv(X-X) \right) \cap \Z^3,
\]
with the exact value $c_3 = 21 \cdot (s_7-1) = 2.23651195966947 \cdot 10^{14}$, may be taken as a finite search space in this case, which of course is explicitly computable.
Clearly, the constant $c_3$ chosen above is tremendously large.
This is due to our limited knowledge on the best upper bound on the coefficient of asymmetry in~\eqref{eqn:ca-bound}.
If the conjectured bound in Remark~\ref{rem:pikhurkos-guess} were true, we could replace $c_3$ with the constant $s_3^2-2 = 47$.

\subsection*{Details for Case 2.1.1}

By assumption, we have $w(Q_p) \geq 2$ and a unimodular copy of $Q_p$ is contained in the toblerone $\cM_0$, which has lattice-width~$2$.
Therefore, $w(Q_p)=2$.
In order to find a finite search space that is guaranteed to contain the observer~$p$, we need to identify \emph{all} unimodular copies of~$Q_p$ that lie in~$\cM_0$, or equivalently, all unimodular copies of~$\cM_0$ that contain~$Q_p$.

\begin{lemma}
\label{lem:toblerones}
For a subset $Y \subseteq \Z^3$ of lattice-width $w(Y) = 2$, let
\[
T(Y) := \setcond{\varphi(\cM_0)}{Y \subseteq \varphi(\cM_0) \text{ and } \varphi : \R^3 \to \R^3 \text{ is a unimodular map}}
\]
be the family of unimodular copies of the toblerone~$\cM_0$ that contain~$Y$, and assume that $T(Y) \neq \emptyset$.
Then, $T(Y)$ is finite and explicitly computable.
\end{lemma}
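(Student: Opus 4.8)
The plan is to pin down every toblerone $\varphi(\cM_0) \in T(Y)$ by a finite amount of data: its three primitive outer facet normals, which I will force into the explicitly computable finite set $\D_Y$ of lattice-width directions of~$Y$, together with three integer right-hand sides whose sum is a fixed constant and which are bounded once the normals are chosen. First I would record the relevant combinatorics of $\cM_0$. Writing it as a polyhedron, $\cM_0 = \setcond{x \in \R^3}{-e_1^\intercal x \le 0,\ -e_2^\intercal x \le 0,\ (e_1+e_2)^\intercal x \le 2}$, so its outer facet normals $n_1 = -e_1$, $n_2 = -e_2$, $n_3 = e_1+e_2$ are primitive and satisfy $n_1 + n_2 + n_3 = 0$, with support values $c = (0,0,2)$ adding up to $c_1 + c_2 + c_3 = 2$. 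A direct computation over the triangular cross-section gives $w(\cM_0,n_i) = 2 = w(\cM_0)$, so each $n_i$ is a lattice-width direction of~$\cM_0$. All of these features are unimodularly invariant, which is what I will exploit.

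Next, for a unimodular map $\varphi(x) = Ux + t$ with $U \in \GL_3(\Z)$ and $t \in \Z^3$, I would track how this data transforms. The outer facet normals of $\varphi(\cM_0)$ are $m_i = U^{-\intercal} n_i$, again primitive with $m_1 + m_2 + m_3 = 0$, and $\varphi(\cM_0) = \setcond{x}{m_i^\intercal x \le \beta_i,\ i=1,2,3}$ where $\beta_i = c_i + m_i^\intercal t \in \Z$; hence $\beta_1 + \beta_2 + \beta_3 = (c_1+c_2+c_3) + (m_1+m_2+m_3)^\intercal t = 2$. The crucial step is to trap the normals in $\D_Y$ using width monotonicity: since $Y \subseteq \varphi(\cM_0)$ one has $w(Y,m_i) \le w(\varphi(\cM_0),m_i) = w(\cM_0,n_i) = 2$, while $w(Y,m_i) \ge w(Y) = 2$ by hypothesis, so $w(Y,m_i) = w(Y)$ and therefore $m_i \in \D_Y$. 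Containment moreover forces $\beta_i \ge \gamma_i := \max_{y \in Y} m_i^\intercal y$, and each $\gamma_i$ is finite precisely because $m_i \in \D_Y$ guarantees $w(Y,m_i) = 2 < \infty$.

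These constraints then deliver finiteness and computability simultaneously. The set $\D_Y = \bigl(w(Y)\cdot(Y-Y)^\star\bigr) \cap \Z^3 \setminus \{0\}$ is finite and explicitly computable, so there are only finitely many admissible normal triples $(m_1,m_2,m_3) \in \D_Y^3$ with $m_1+m_2+m_3 = 0$. For each fixed such triple, $\beta_1+\beta_2+\beta_3 = 2$ together with $\beta_j \ge \gamma_j$ confines each $\beta_i$ to the finite integer range $\gamma_i \le \beta_i \le 2 - \gamma_j - \gamma_k$, so there are finitely many candidate right-hand sides. I would enumerate all candidate polyhedra $\setcond{x}{m_i^\intercal x \le \beta_i}$ obtained this way and, for each, decide whether it is genuinely a unimodular copy of $\cM_0$ (containment of~$Y$ is already built into $\beta_i \ge \gamma_i$). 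This test is effective: the candidate is a copy of~$\cM_0$ exactly when the $m_i$ are primitive with $m_1+m_2+m_3 = 0$ (which automatically makes them pairwise non-parallel), $m_1$ and $m_2$ extend to a $\Z$-basis of $\Z^3$, and $\beta_1+\beta_2+\beta_3 = 2$; in that case one reconstructs an explicit $U$ from the basis and solves the integral system $m_1^\intercal t = \beta_1$, $m_2^\intercal t = \beta_2$ for~$t$. Retaining exactly the candidates that pass yields $T(Y)$, which is thus finite and explicitly computable; note that this argument establishes finiteness whether or not $T(Y)$ is actually nonempty.

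The main obstacle, and the genuine content of the argument, is the width-monotonicity step that traps the facet normals of an \emph{arbitrary} copy $\varphi(\cM_0)$ inside the finite set $\D_Y$: this is where the hypothesis $w(Y) = 2$ is used in an essential way, since it is precisely the equality $w(Y) = w(\cM_0)$ that squeezes $w(Y,m_i)$ between $w(Y)$ and $2$. Everything else — the integrality of the $\beta_i$, the fixed sum $\sum_i \beta_i = 2$, and the resulting boundedness of the right-hand sides — is bookkeeping that follows from unimodular invariance. A secondary point requiring care is the effective test for "being a unimodular copy of $\cM_0$", which I would handle through the explicit basis-extension criterion above rather than invoking a black-box equivalence decision.
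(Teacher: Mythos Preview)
Your proof is correct and follows essentially the same approach as the paper: both arguments pin down the facet normals of any $\varphi(\cM_0)\in T(Y)$ inside the finite, computable set $\D_Y$ of lattice-width directions of~$Y$ via the width-monotonicity squeeze $w(Y)\le w(Y,m_i)\le w(\cM_0,n_i)=2$, and then impose a lattice-basis condition on two of the normals to certify that the candidate polyhedron is genuinely a unimodular copy of~$\cM_0$. Your version is somewhat more explicit than the paper's about the translational part (bounding the integer right-hand sides $\beta_i$ via $\sum_i\beta_i=2$ and $\beta_i\ge\gamma_i$) and about the effective reconstruction of $(U,t)$, whereas the paper simply asserts a correspondence between toblerones in $T(Y)$ and admissible pairs $(u,v)$ of width directions.
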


\begin{proof}
First of all, the toblerones $T \in T(Y)$ are in correspondence with pairs $(u,v)$ of lattice-width directions of~$Y$ such that
\begin{enumerate}[label=(\roman*)]
 \item $u+v$ is a lattice-width direction of~$Y$ as well, and
 \item $\{u,v\}$ is a basis of the two-dimensional lattice $\lin(\{u,v\}) \cap \Z^3$.
\end{enumerate}
The lattice-width directions of~$Y$ are the non-zero vectors in the set $\Z^3 \cap 2 (Y-Y)^\star$ which is finite and computable.
This already shows the claimed finiteness of $T(Y)$.

Condition~(i) on a pair $(u,v)$ can be checked by a membership test of $u+v$ in~$2 (Y-Y)^\star$.
Condition~(ii) can be computationally checked by computing the row-style Hermite normal form of the $(3 \times 2)$-matrix with columns $u,v$.
The set $\{u,v\}$ is a basis of $\lin(\{u,v\}) \cap \Z^3$ if and only if this Hermite normal form equals the matrix with columns $e_1,e_2$ (cf.~\cite[Ch.~4]{schrijver1986theory} for more details).
\end{proof}

Let us now fix one toblerone $T \in T(Q_p)$.
With the notation of the proof of Lemma~\ref{lem:toblerones}, we write $w=u+v$ for a pair $(u,v)$ of lattice-width directions of~$Q_p$ corresponding to~$T$.
Moreover, denote the three unbounded facets of~$T$ that are orthogonal to $u,v$ and $w$ by $F_u,F_v$ and $F_w$, respectively.
Let $\pi$ be the orthogonal projection onto the hyperplane $\lin(\{u,v\})$.
There are two options: Either $\phi(\pi(X)) = \conv(\{0,2e_1,2e_2\})$ or exactly one of the vertices $0$, $2e_1$, $2e_2$ is missing in the projection $\phi(\pi(X))$.
Indeed, if two such vertices were missing, then $w(Q_p)=1$, contradicting our assumption.

Write $Q = \conv(X)$ as usual.
In the first case, all the intersections $Q \cap F_u$, $Q \cap F_v$, and $Q \cap F_w$ are lattice polygons of dimension $1$ or $2$, and the observer~$p$ may a priori lie anywhere on the facets $F_u$, $F_v$, and $F_w$.
Without loss of generality, assume that $p \in F_u$, and let $E$ be an edge of $Q \cap F_u$ with the property that the intersection of $\conv(E \cup \{p\})$ and $Q \cap F_u$ is one-dimensional.
Since $p$ is an observer of $X$, it can only lie in the next parallel lattice line to $\aff(E)$, as otherwise the lattice triangle $\conv(E \cup \{p\})$ would have too large an area and contain additional lattice points.
This follows, for instance, from Pick's Theorem that relates the area of a lattice polygon to the number of lattice points that it contains (cf.~\cite[Sect.~19]{gruber2007convex}).

In the second case, again all the intersections $Q \cap F_u$, $Q \cap F_v$, and $Q \cap F_w$ are lattice polygons of dimension $1$ or $2$, but now the observer $p$ is constrained to lie in an unbounded edge, say $F_u \cap F_v$, of the toblerone ~$T$.
Again, let $E$ be an edge of $Q \cap F_u$ or $Q \cap F_v$ with the property described before.
Similarly as in the first case, $p$ must lie in the lattice line $\aff(E)$ or in the next parallel lattice line to it, in order not to contradict the property of being an observer.

Summarizing our considerations, in both cases there is only a finite and explicitly computable set $\cS_T$ of lattice points that are candidates for the observer~$p$.
Hence a finite and computable search space that is guaranteed to contain the observer~$p$ is
\[
\cS_{2.1.1} = \bigcup_{T \in T(Q_p)} \cS_T.
\]

\subsection*{Details for Case 2.1.2}

Since by assumption a unimodular copy of~$Q_p$ is contained in one of the lattice polytopes $\cM_1,\ldots,\cM_{12}$ of volume at most $6$, the observer~$p \in \obs(X)$ cannot lie too far away from~$X$ in the following sense:
Write $Q = \conv(X) = \setcond{x \in \R^3}{a_i^\intercal x \leq b_i, 1 \leq i \leq m}$, where the $a_i \in \Z^3$ are primitive outer normal vectors of the facets of~$Q$, and where $b_i \in \Z$ for $1 \leq i \leq m$.
Since every facet~$F$ of~$Q$ is a lattice polygon and the pyramid $F_p = \conv(F \cup \{p\})$ is a lattice $3$-polytope of volume at most~$6$, the point~$p$ can lie at most $36$ parallel lattice hyperplanes away from $\aff(F)$ (note that a lattice $3$-simplex has volume at least $1/6$).
Therefore, the finite search space for~$p$ in this case can be taken as
\[
\cS_{2.1.2} = \setcond{x \in \R^3}{a_i^\intercal x \leq b_i+36, 1 \leq i \leq m} \cap \Z^3.
\]

\subsection*{Details for Case 2.2.1}

If there is a lattice-width direction of~$X$ of type $(1,1)$, then $\conv(X)$ is a tetrahedron and as such a rational relaxation of~$X$ with four facets.
Since $X$ is also full-dimensional, Corollary~\ref{cor:rc>=d+1:d<=4} yields that $\rc_\Q(X) = \rc(X) = 4$.

\subsection*{Details for Case 2.2.2}

For sets $X$ with a lattice-width direction of type $(2,0)$ we need an auxiliary lemma that is valid in every dimension.

\begin{lemma}
\label{lem:pyramids}
Let $X' \subseteq \Z^{d-1}$ be a finite lattice-convex set and let $x^\star \in \Z^d$ be a point whose last coordinate equals~$1$.
If $\rc(X') \geq d+1$, then the lattice-convex set $X = (X' \times \{0\}) \cup \{x^\star\} \subseteq \Z^d$ satisfies $\rc(X) = \rc(X')$.
\end{lemma}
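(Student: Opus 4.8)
The plan is to reduce to a normal form, prove the two inequalities $\rc(X)\ge\rc(X')$ and $\rc(X)\le\rc(X')$ separately, and isolate where the hypothesis $\rc(X')\ge d+1$ is needed. After passing to the affine hull I would assume $X'$ is full-dimensional in $\R^{d-1}$, and after applying the unimodular shear $x \mapsto x - x_d\,(x^\star_1,\ldots,x^\star_{d-1},0)$, which fixes the hyperplane $\{x_d=0\}$ pointwise and maps $X'\times\{0\}$ to itself, I may assume $x^\star=e_d$. Then $X=(X'\times\{0\})\cup\{e_d\}$ and $\conv(X)$ is the pyramid over $\conv(X')$ with apex $e_d$; since lattice points occur only in the layers $x_d=0$ and $x_d=1$, the set $X$ is indeed lattice-convex.

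For the lower bound, which needs no hypothesis, I would take any relaxation $P\subseteq\R^d$ of $X$ with $k=\rc(X)$ facets and slice it at height $0$. Viewed inside the hyperplane $\{x_d=0\}\cong\R^{d-1}$, the slice satisfies $(P\cap\{x_d=0\})\cap\Z^{d-1}=X\cap\{x_d=0\}=X'$, so it is a relaxation of $X'$. Every facet of the slice lies in the intersection of $\{x_d=0\}$ with a facet of $P$, so the slice has at most $k$ facets; hence $\rc(X')\le k=\rc(X)$.

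For the upper bound I would fix a relaxation $P'=\setcond{x'\in\R^{d-1}}{a_i'^\intercal x'\le b_i',\ i\in[k]}$ of $X'$ with $k=\rc(X')$ facets. In the setting where the lemma is applied ($d\le4$, so all relaxations of the full-dimensional $X$ and of $X'$ are bounded by Lemma~\ref{lem:bounded:relaxation}) the polytope $P'$ is bounded, so its normals $a_1',\ldots,a_k'$ positively span $\R^{d-1}$; in general one argues with an unbounded $P'$ accordingly. I would then tilt each defining hyperplane into $\R^d$, setting $P:=\setcond{(x',x_d)\in\R^d}{a_i'^\intercal x'+c_i x_d\le b_i',\ i\in[k]}$ for tilt coefficients $c_i$ to be chosen. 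Regardless of the $c_i$, the cross-section of $P$ at height $0$ equals $P'$, so the lattice points of $P$ in that layer are exactly $X'$. The coefficients should then satisfy (i) $e_d\in P$, i.e.\ $c_i\le b_i'$; (ii) $P$ is bounded; and (iii) the cross-sections at integer heights $x_d\le-1$ and $x_d\ge2$ are lattice-free, while the cross-section at $x_d=1$ has $e_d$ as its only lattice point. Given such $c_i$, a layer-by-layer check gives $P\cap\Z^d=X$, and $P$ has at most $k$ facets, so $\rc(X)\le k=\rc(X')$.

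The main obstacle is the simultaneous realizability of (i)--(iii), and this is exactly where $\rc(X')\ge d+1$ enters. Boundedness of $P$ in (ii) is equivalent to the lifted normals $(a_1',c_1),\ldots,(a_k',c_k)$ positively spanning $\R^d$; but a positive spanning set of $\R^d$ must have at least $d+1$ elements, and we have precisely $k\ge d+1$ of them. Since $a_1',\ldots,a_k'$ already positively span $\R^{d-1}$, I expect that assigning heights $c_i$ of suitably mixed signs places the origin in the interior of $\conv\{(a_i',c_i)\}$, securing the positive spanning and hence the boundedness. The remaining normalizations in (i) and (iii) — pinning the apex to height $1$ as the topmost vertex, and making the lower facets close off before height $-1$ — should then be arranged by scaling the magnitudes of the tilts, using that $k\ge d+1$ leaves at least a two-dimensional space of nonnegative linear dependencies among the $a_i'$ to control the top and bottom caps independently. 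The technical heart of the proof, which I would expect to require the most care, is verifying that these choices are mutually compatible, so that no stray lattice point appears in the layers $x_d\le-1$, $x_d\ge2$, or at $x_d=1$ away from the apex.
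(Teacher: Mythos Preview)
Your approach is the same as the paper's: slice at height~$0$ for $\rc(X)\ge\rc(X')$, and tilt the facets of a minimal relaxation $P'$ of $X'$ for $\rc(X)\le\rc(X')$. The place where you stop --- choosing the tilts $c_i$ so that (i)--(iii) hold simultaneously --- is exactly where the paper supplies the missing concrete construction, and it is simpler than the degrees-of-freedom heuristic you outline.

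Rather than treating all $c_i$ as free parameters, the paper fixes them as follows. After a small perturbation of $P'$ one may select $d$ of the facet normals, say $a_1',\ldots,a_d'$, with $0$ in the relative interior of $\conv(\{a_1',\ldots,a_d'\})$ (this is where $k\ge d+1$ is used, since such a $d$-subset of normals must exist once there are at least $d+1$ of them). Tilt these $d$ facets so that the corresponding hyperplanes $H_i=\aff(F_i'\cup\{x^\star\})$ pass through the apex; because the chosen normals positively span $\R^{d-1}$, these $d$ tilted halfspaces alone cut off every lattice point with $z_d>0$ other than $x^\star$. Of the remaining $k-d\ge 1$ facets, keep all but one horizontal ($c_i=0$) and give one a large negative tilt $c_{d+1}=-M$; this single almost-horizontal halfspace disposes of all lattice points with $z_d<0$. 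The layer $z_d=0$ is unchanged, so $P\cap\Z^d=X$. This explicit split into ``$d$ facets through the apex'' plus ``one big-$M$ floor'' resolves your compatibility concern directly, without appealing to boundedness of $P'$ via Lemma~\ref{lem:bounded:relaxation}.
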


\begin{proof}
Let $P' \subseteq \R^{d-1}$ be a polyhedron with $m = \rc(X') \geq d+1$ facets such that $P' \cap \Z^{d-1} = X'$, and let $a'_1,\ldots,a'_m$ be a set of outer normal vectors of the facets of~$P'$.
After a perturbation of the facets of $P'$, we may assume that there are $a'_1,\ldots,a'_d$, say, such that the origin $0$ is contained in the relative interior of $\conv(\{a_1',\ldots,a'_d\})$.
Writing $F'_1,\ldots,F'_d$ for the corresponding facets of $P'$, this yields that the hyperplanes $H_i = \aff(F'_i \cup \{x^\star\})$, $1 \leq i \leq d$, separate $X$ from all points in $\left\{z \in \Z^d : z_d > 0\right\} \setminus X$.
This holds since $x^\star$ has height $1$ over $P'$.

Without loss of generality, we may assume that the projection of $x^\star$ onto the first $d-1$ coordinates is contained in~$P'$.
In this setting we can separate $X$ from the lattice points with negative $d$th coordinate by taking as additional hyperplane normals $a_i = (a'_i,0)$, for every $i \geq d+2$, and $a_{d+1} = (a'_{d+1},-M)$, for some large enough $M$ such that the corresponding hyperplane is almost horizontal and thus cuts off all lattice points ``below''~$P'$.

The lattice points outside of $X$ which have vanishing last coordinate are separated from~$X$, because in the process above we just tilted all the facets of~$P'$, thus leaving the separation within the plane $\{x \in \R^d : x_d = 0\}$ unchanged.

We showed so far that $\rc(X) \leq m = \rc(X')$.
If this would be strict however, then we would find a relaxation of $X$ within $\Z^d$ with~$k < m$ hyperplanes, whose restriction to the plane $\{x \in \R^d : x_d = 0\}$ would lead to a relaxation of $X'$ within $\Z^{d-1}$ with $\leq k$ hyperplanes, a contradiction to the definition of~$m$.
\end{proof}

Now, if $X \subseteq \Z^3$ has a lattice-width direction of type $(2,0)$, then up to unimodular equivalence, there is some two-dimensional finite lattice-convex set $X' \subseteq \Z^2$ and some $x^\star \in \Z^3$ with last coordinate equal to~$1$, such that $X = (X' \times \{0\}) \cup \{x^\star\}$.
In view of Weltge's~\cite[Thm.~7.5.7]{weltge2015diss} treatment of planar lattice-convex sets, we know that $\rc(X')=\rc_\Q(X') \geq 3$ and that this number is computable.
So, if $\rc(X') \geq 4$, then Lemma~\ref{lem:pyramids} implies that $\rc(X) = \rc(X')$ is computable.
If, however, $\rc(X') = 3$ and $P' \subseteq \R^2$ is a relaxation of~$X'$ within~$\Z^2$ having three facets, then $P = \conv\left((P' \times \{0\}) \cup \{x^\star\}\right)$ is a relaxation of~$X$ within~$\Z^3$ having four facets.
Since $X$ is full-dimensional, Corollary~\ref{cor:rc>=d+1:d<=4} guarantees that every of its relaxations need to have at least four facets and thus $\rc(X)=\rc_\Q(X)=4$.

\subsection*{Details for Case 2.2.3}

Since we dealt with the cases of lattice-width directions of type $(1,1)$ and $(2,0)$ before, we assume in the sequel that $X$ does not have any of such lattice-width directions, and hence every $u \in \D_X$ is of type either $(2,1)$ or $(2,2)$.
By assumption we have $w(Q_p)=1$, and thus also $w(X)=1$.
In general for sets~$X \subseteq \Z^3$ of lattice-width $w(X)=1$, the observers are contained in finitely many affine subspaces.
Recall that for each $u \in \D_X$, we denote by~$H_u^+$ and~$H_u^-$ the parallel supporting lattice planes of~$\conv(X)$.

\begin{lemma}
\label{lem:structure-of-observers}
Let $X \subseteq \Z^3$ be a full-dimensional lattice-convex set of lattice-width $w(X)=1$.
Then, there is a finite subset $Y_0 \subseteq \Z^3$ such that
\[
\obs(X) \subseteq Y_0 \cup \bigcup_{u \in \D_X} (H_u^+ \cap \Z^3) \cup (H_u^- \cap \Z^3).
\]
More precisely, if every lattice-width direction of~$X$ is of type either $(2,1)$ or $(2,2)$, then there are lattice lines $L_1,\ldots,L_m$ such that
 \[
 \obs(X) \subseteq Y_0 \cup (L_1 \cap \Z^3) \cup \ldots \cup (L_m \cap \Z^3),
 \]
 and the set~$Y_0$ and a lattice point on and the direction of every line~$L_i$ can be computed explicitly.
\end{lemma}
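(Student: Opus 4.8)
The plan is to analyze an observer $p \in \obs(X)$ through the lattice-width of the polytope $Q_p = \conv(X \cup \{p\})$, which has exactly $\card{X}+1$ lattice points, and to split into two regimes according to whether $w(Q_p)$ equals $1$ or is at least $2$. If $w(Q_p) \ge 2$, then $w(Q_p) > w^\infty(3) = 1$, so Theorem~\ref{thm:hollow-threshold-width} guarantees that only finitely many such observers exist; moreover these are precisely the observers treated in Cases~1 and~2.1, for which explicit finite and computable search spaces have already been produced, and I collect all of them into the finite set $Y_0$. If instead $w(Q_p) = 1$, I pick a direction $u'$ realizing this width. Then $w(X,u') \le w(Q_p,u') = 1$, and since $w(X)=1$ is minimal we get $w(X,u')=1$, i.e.\ $u' \in \D_X$. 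Comparing the unit-length ranges of $u'^\intercal x$ over $X$ and over $Q_p$ forces $u'^\intercal p$ into the range $[\min_X u'^\intercal x, \max_X u'^\intercal x]$, and as $u'^\intercal p \in \Z$ this gives $p \in H_{u'}^+ \cup H_{u'}^-$. This already yields the first inclusion $\obs(X) \subseteq Y_0 \cup \bigcup_{u \in \D_X}(H_u^+ \cup H_u^-) \cap \Z^3$.

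For the refined statement I would sharpen the description of observers lying on a supporting plane. Fix $u \in \D_X$ and an observer $p \in H_u^+$ (the case $p \in H_u^-$ being symmetric). Because $p$ and the points of $X \cap H_u^-$ lie in the two extreme parallel planes, slicing gives $\conv(X \cup \{p\}) \cap H_u^+ = \conv((X \cap H_u^+) \cup \{p\})$, and intersecting the observer identity $\conv(X \cup \{p\}) \cap \Z^3 = X \cup \{p\}$ with $H_u^+$ shows that $p$ observes the planar lattice-convex set $X \cap H_u^+$ inside the two-dimensional affine lattice $H_u^+ \cap \Z^3$. If this slice is two-dimensional, which happens for type $(2,2)$ directions and for the two-dimensional side of type $(2,1)$ directions, then by Weltge's Proposition~\ref{prop:finite-observers-dim-2} there are only finitely many such $p$, and I add them to $Y_0$.

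The lattice lines arise solely from the one-dimensional sides of type $(2,1)$ directions. If $X \cap H_u^+$ is one-dimensional, it lies on a lattice line $\ell \subseteq H_u^+$, and I would argue via Pick's theorem that an observer $p \in H_u^+$ of this segment must lie either on $\ell$ itself or on one of the two lattice lines in $H_u^+$ adjacent and parallel to $\ell$: if $p$ were two or more lattice lines away from $\ell$, then the triangle $\conv((X \cap H_u^+) \cup \{p\})$ would have area large enough to force an interior or extra boundary lattice point, contradicting that $p$ observes $X \cap H_u^+$. The observers on $\ell$ itself are finite (the two extensions of the segment) and go into $Y_0$, while the two adjacent lines are taken among the $L_i$. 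Since $\card{\D_X} \le 3^d-1$ and types $(1,1)$ and $(2,0)$ have been excluded in the preceding Cases~2.2.1 and~2.2.2, only finitely many directions, and hence finitely many lines $L_i$, occur. Computability of $Y_0$ follows from the explicit search spaces of Cases~1 and~2.1 together with the algorithmic content of Proposition~\ref{prop:finite-observers-dim-2}, while $\D_X$ and each line $\ell$ with its two neighbours are computed from a lattice basis of the respective supporting plane.

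The main obstacle I anticipate is the bookkeeping around computability rather than the geometry. Theorem~\ref{thm:hollow-threshold-width} delivers only \emph{finiteness} of the $w(Q_p) \ge 2$ observers, so their \emph{explicit} computation must be routed through the finite search spaces of Cases~1 and~2.1; and I must be careful that both the slicing identity and the Pick-type estimate are carried out inside the correct two-dimensional affine lattice $H_u^\pm \cap \Z^3$, which is only unimodularly a copy of $\Z^2$ and whose line structure has to be read off intrinsically from a chosen basis.
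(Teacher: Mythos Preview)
Your proposal is correct and follows essentially the same route as the paper: split according to whether $w(Q_p)\ge 2$ (finitely many observers, captured by the search spaces $\cS_1,\cS_{2.1.1},\cS_{2.1.2}$ from Cases~1 and~2.1) or $w(Q_p)=1$ (forcing $p$ into a supporting plane $H_u^\pm$ for some $u\in\D_X$), and then refine the planar case via Proposition~\ref{prop:finite-observers-dim-2} for two-dimensional slices and a Pick-type argument for one-dimensional slices. Your write-up is in fact slightly more explicit than the paper's in two places---you spell out the comparison of ranges showing $p\in H_{u'}^+\cup H_{u'}^-$, and you verify the slicing identity $Q_p\cap H_u^+=\conv((X\cap H_u^+)\cup\{p\})$---and the only cosmetic difference is that you absorb the two observers on the line $\ell=L_0$ into $Y_0$ rather than listing $L_0$ among the $L_i$, which is immaterial for the statement.
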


\begin{proof}
For $p \in \obs(X)$, let $Q_p = \conv(X \cup \{p\})$ be as above.
If $w(Q_p) > 1$, then by Case~1 and Case~2.1 there are only finitely many choices for the observer~$p$, and we may take the finite set in the claim as
\[
Y_0 = \cS_1 \cup \cS_{2.1.1} \cup \cS_{2.1.2}.
\]
If $p \in \obs(X)$ is such that $w(Q_p) = 1$, then $p \in H_u^+ \cup H_u^-$ for some $u \in \D_X$, and the claimed inclusion follows.

Now assume that every lattice-width direction of~$X$ is of type either $(2,1)$ or $(2,2)$.
Whenever $\dim(H_u^\pm \cap X) = 2$, then by Proposition~\ref{prop:finite-observers-dim-2} the corresponding plane contains only finitely many observers of~$X$, which are computable and which we may include into the finite set~$Y_0$.
If $\dim(H_u^+ \cap X) = 1$, say, then every observer of $X$ that is contained in $H_u^+$ lies on one of three consecutive parallel lattice lines.
To see this, let $L_0 = \aff(H_u^+ \cap X)$ and let~$L_+$ and $L_-$ be the two neighboring lattice lines to $L_0$ in~$H_u^+$.
Every lattice point on either~$L_+$ or $L_-$ is in fact an observer, and there are exactly two observers in $L_0$ (the lattice points next to the endpoints of $\conv(H_u^+ \cap X)$).
If $p \in H_u^+ \cap \Z^3$ is not contained in either of the three lines $L_-,L_0,L_+$, then $\conv\left((H_u^+ \cap X) \cup \{p\}\right)$ is a lattice triangle in $H_u^+$ that is not unimodularly equivalent to $\conv(\{0,e_1,e_2\})$.
This triangle must therefore have an additional lattice point, so that, in fact,~$p$ cannot be an observer of~$X$.
Computability of the lines $L_-,L_0$, and $L_+$ follows immediately from their definition.
\end{proof}

Now, if $X$ has a lattice-width direction of type $(2,1)$, then we use
Proposition~\ref{prop:obs-observations:XuObs} for the set
\[
Y = Y_0 \cup (L_1 \cap \Z^3) \cup \ldots \cup (L_m \cap \Z^3)
\]
from Lemma~\ref{lem:structure-of-observers}, and obtain that $\rc(X) = \rc(X,X \cup Y)$.
If the lattice lines $L_1,\ldots,L_m$ are parallel, we can apply Corollary~\ref{cor:rc-special-Y} and compute~$\rc(X)$ by the quantifier elimination procedure outlined in Section~\ref{sect:bcli}.
The proof of Lemma~\ref{lem:structure-of-observers} shows that this holds if all the one-dimensional sets $H_u^+ \cap X$, where $u \in \D_X$ are the lattice-width directions of~$X$ of type $(2,1)$, are parallel.

If the lattice lines $L_1,\ldots,L_m$ are not parallel, then $X$ belongs to an explicit parametrized family.
For its description, we fix some more notation:
For a lattice-width direction~$u \in \D_X$ of type $(2,1)$, write $F(X,u)$ for the set of those $x \in X$ such that $u^\intercal x$ is maximized on~$X$.
Also, we assume in the sequel that $u$ is oriented such that $\dim(F(X,u))=1$, and thus $\dim(F(X,-u))=2$.

\begin{lemma}
\label{lem:outsider}
Let $X \subseteq \Z^3$ be a finite lattice-convex set admitting linearly independent lattice-width directions $u,v \in \D_X$ of type $(2,1)$ such that $F(X,u)$ and $F(X,v)$ are not parallel.
Then, $X$ is unimodularly equivalent to
\[
X_{a,b} = \{e_1,e_2,e_1+e_2\} \cup \setcond{k e_3}{k=a,\ldots,b},
\]
for some $a,b \in \Z$ with $a \leq b$ and $\{a,b\} \neq \{0\}$.
\end{lemma}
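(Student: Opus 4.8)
The plan is to choose coordinates adapted to the two directions $u$ and $v$, to classify the points of $X$ by a discrete ``type'', and to finish with an explicit unimodular normalization.

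First I would exploit $w(X)=1$: since $u,v\in\D_X$ they are primitive, and $X$ lies in the slab between the consecutive lattice planes $\{u^\intercal x=c_u\}$, $\{u^\intercal x=c_u+1\}$, and likewise for $v$. Hence every $x\in X$ has $(u^\intercal x-c_u,\,v^\intercal x-c_v)\in\{0,1\}^2$, which I call the \emph{type} of $x$. Writing $\ell_u:=\aff F(X,u)$ and $\ell_v:=\aff F(X,v)$, the face $F(X,u)=\setcond{x\in X}{u^\intercal x=c_u+1}$ is a one-dimensional lattice segment on $\ell_u$ carrying exactly the points of type $(1,0)$ and $(1,1)$, and symmetrically $F(X,v)$ on $\ell_v$; the two-dimensional face $F(X,-u)$ carries the types $(0,0)$ and $(0,1)$. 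I set $C:=\setcond{x\in X}{u^\intercal x=c_u,\ v^\intercal x=c_v}$, the type-$(0,0)$ points, which lie on the line $\ell_0=\{u^\intercal x=c_u\}\cap\{v^\intercal x=c_v\}$; being the intersection of the lattice-convex set $X$ with a line, $C$ is a lattice segment.

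The combinatorial heart of the argument — and the step I expect to be the main obstacle — is to pin down the multiplicities of the four types. A type-$(1,1)$ point lies on both $\ell_u$ and $\ell_v$, and since these lines are not parallel there is at most one of them. To control $F(X,v)$ I would note that it consists of consecutive lattice points along the primitive step $s$ of $\ell_v$, on all of which $u^\intercal x$ takes values in a set of two consecutive integers; since $u^\intercal x$ changes by $u^\intercal s$ per step, one-dimensionality of $F(X,v)$ forces either $u^\intercal s=0$, in which case $F(X,v)$ is entirely of type $(0,1)$, or $u^\intercal s=\pm1$, in which case $F(X,v)$ is exactly one type-$(0,1)$ point and one type-$(1,1)$ point. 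The same dichotomy applies to $F(X,u)$. The key observation is then that if there were no type-$(1,1)$ point, both faces would fall into the first alternative, which forces $\ell_u$ and $\ell_v$ to be parallel to the primitive generator of $\ell_0$ and hence parallel to each other — contradicting the hypothesis $F(X,u)\not\parallel F(X,v)$. Therefore there is exactly one point of each type $(1,0)$, $(0,1)$, $(1,1)$; and since $F(X,-u)=C\cup(\text{type }(0,1))$ is two-dimensional with only one point of type $(0,1)$, we must have $\card{C}\ge 2$.

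Finally I would normalize. Because all four types occur, the differences of the corresponding four points show that the projection $x\mapsto(u^\intercal x,v^\intercal x)$ maps $\Z^3$ onto $\Z^2$; hence $u,v$ extend to a basis of the dual lattice and I may pick $w\in\Z^3$ so that the matrix $M$ with rows $u,v,w$ is unimodular. After translating $X$ by a lattice vector so that $c_u=c_v=0$, the map $\phi=M$ sends the three special points to $(1,0,\ast),(0,1,\ast),(1,1,\ast)$ and $C$ to a run of consecutive lattice points $\{(0,0,k)\}$ on the $e_3$-axis, the latter because the primitive generator $g$ of $\ell_0$ satisfies $w^\intercal g=\pm1$. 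A further unimodular shear $y\mapsto(y_1,y_2,\,y_3-\alpha y_1-\beta y_2-\tau)$ has, for the right integers $\alpha,\beta,\tau$, the simultaneous effect of moving the three special points to $e_1,e_2,e_1+e_2$ and the axis run to $\{a e_3,\dots,b e_3\}$; the three ``third-coordinate'' conditions determine $(\alpha,\beta,\tau)$ uniquely, and $\tau$ then fixes $a,b$. As $\card{C}\ge2$ gives $a<b$, in particular $\{a,b\}\neq\{0\}$, and we obtain that $X$ is unimodularly equivalent to $X_{a,b}$.
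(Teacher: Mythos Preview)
Your proof is correct and follows essentially the same route as the paper: both arguments classify the points of $X$ according to the pair $(u^\intercal x,v^\intercal x)$ (the paper realizes this as the projection $\pi:\R^3\to\R^2$ after arranging $u,v\perp e_3$), show that the non-parallel hypothesis forces exactly one point of each ``nonzero'' type, and then apply a unimodular normalization. Your version is in fact a little more explicit than the paper's about why the map $x\mapsto(u^\intercal x,v^\intercal x)$ is surjective onto $\Z^2$ (so that $u,v$ extend to a unimodular matrix), and your derivation that $|C|\ge 2$ makes the condition $\{a,b\}\neq\{0\}$ transparent; the only cosmetic slip is in the dichotomy for $u^\intercal s=0$, where $F(X,v)$ could a priori be entirely of type $(1,1)$ rather than $(0,1)$, but that alternative is immediately ruled out by $\ell_u\neq\ell_v$.
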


\begin{proof}
First of all, we can apply a unimodular transformation and assume that both $u$ and $v$ are orthogonal to~$e_3$.
Let $\pi:\R^3\to\R^2$ be the projection $\pi(x_1,x_2,x_3)=(x_1,x_2)$ that forgets the last coordinate.
Since $w(X,u)=w(X,v)=1$, the projection $\pi(X)$ is unimodularly equivalent to $\{0,e_1,e_2\}$ or $\{0,e_1,e_2,e_1+e_2\}$.
The first case cannot happen, because then both $F(X,u)$ and $F(X,v)$ would be parallel to~$e_3$, contradicting our assumption.

So, we may assume that $\pi(X) = \{0,e_1,e_2,e_1+e_2\}$, and that $u=e_1$ and $v=e_2$.
Since $F(X,e_1)$ and $F(X,e_2)$ are not parallel, one of them, say $F(X,e_1)$, is not parallel to~$e_3$.
Then,
\[
\pi(F(X,-e_1)) = \{0,e_2\} \quad \text{and} \quad \pi(F(X,e_1)) = \{e_1,e_1+e_2\}.
\]
This implies that $F(X,e_2)$ is also not parallel to~$e_3$.
Indeed, if $F(X,e_2)$ would be parallel to~$e_3$, then $\pi(F(X,e_2))$ would be a single point, either equal to~$\{e_2\}$ or $\{e_1+e_2\}$.
In the first case, the set $F(X,e_2)$ would be two-dimensional, and in the second case, $F(X,e_1)$ would be two-dimensional, a contradiction either way.
We conclude that
\[
\pi(F(X,e_1)) = \{e_1,e_1+e_2\} \quad \text{and} \quad \pi(F(X,e_2)) = \{e_2,e_1+e_2\}.
\]
Furthermore, $F(X,e_1)$ and $F(X,e_2)$ share exactly one point that gets projected onto~$e_1+e_2$.
Thus, we have that $F(X,e_1) = \{p,q\}$ and $F(X,e_2) = \{q,r\}$, where $\pi(p) = e_1$, $\pi(q) = e_1+e_2$, and $\pi(r) = e_2$.

Applying a suitable unimodular transformation, we may thus assume that $p=e_1$, $q=e_1+e_2$, and $r=e_2$, and thus $X = X_{a,b}$, for some $a,b\in\Z$.
\end{proof}

In order to finish up Case 2.2.3 of the proof of Theorem~\ref{thm:computability-d-3}, we explicitly determine the relaxation complexity of the exceptional examples in Lemma~\ref{lem:outsider}.

\begin{lemma}
\label{lem:outsider-rc}
For every $a,b \in \Z$ with $a \leq b$ and $\{a,b\} \neq \{0\}$, the relaxation complexity of the set
\[
X_{a,b} = \{e_1,e_2,e_1+e_2\} \cup \setcond{k e_3}{k=a,\ldots,b}
\]
is given by $\rc(X_{a,b}) = \rc_\Q(X_{a,b}) = 4$.
\end{lemma}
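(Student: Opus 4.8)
The plan is to sandwich the value between $4$ and $4$: the lower bound will come from full‑dimensionality, the identity $\rc=\rc_\Q$ from boundedness of all relaxations, and the upper bound from an explicit tetrahedral relaxation. First I would record two structural facts about $X_{a,b}$, writing $T=\conv\{e_1,e_2,e_1+e_2\}$ for the triangle at height $x_3=0$. The set is full‑dimensional: the three points $e_1,e_2,e_1+e_2$ span the plane $\{x_3=0\}$, and since $\{a,b\}\neq\{0\}$ at least one of $a,b$ is nonzero, so $X_{a,b}$ contains a point $ke_3$ with $k\neq0$ off that plane, whence $\dim(X_{a,b})=3$. It is also lattice‑convex: slicing $\conv(X_{a,b})$ at an integer height $t$, the $(x_1,x_2)$‑section is contained in $[0,\lambda]^2$ for some $\lambda\in[0,1)$ when $t\neq0$ (the triangle contracts towards the origin as $|t|$ grows, and $T\subseteq[0,1]^2$), together with the axis point $(0,0)$ precisely when $t\in[a,b]$. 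As $[0,\lambda]^2$ with $\lambda<1$ meets $\Z^2$ only in the origin, the only lattice point at a height $t\neq0$ is the axis point $te_3$ for $t\in[a,b]$; at $t=0$ the section is $T$, or the square $\conv(T\cup\{(0,0)\})=[0,1]^2$ when $0\in[a,b]$, whose lattice points are exactly the triangle points and, in the latter case, the origin. Hence $\conv(X_{a,b})\cap\Z^3=X_{a,b}$.

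With these in hand the lower bound and the rationality statement are immediate. Since $X_{a,b}$ is a full‑dimensional lattice‑convex subset of $\Z^3$, Corollary~\ref{cor:rc>=d+1:d<=4} gives $\rc(X_{a,b})\ge4$. Moreover, as $\dim(X_{a,b})=3\ge2$, Lemma~\ref{lem:bounded:relaxation} shows that every relaxation of $X_{a,b}$ within $\Z^3$ is bounded, so the remark following Theorem~\ref{thm:rc:gen:vs:rat:d<=4} yields $\rc(X_{a,b})=\rc_\Q(X_{a,b})$. It therefore only remains to exhibit a single relaxation with four facets, i.e.\ a tetrahedron $P$ with $P\cap\Z^3=X_{a,b}$; this will give $\rc(X_{a,b})\le4$ and close the chain $4\le\rc(X_{a,b})=\rc_\Q(X_{a,b})\le4$.

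For the tetrahedron I would argue by the position of $0$ relative to $[a,b]$. When $a=b$ (so $a=b\neq0$), the hull $\conv(X_{a,b})$ is itself a lattice tetrahedron whose integer‑height sections are the strictly contracted triangles analysed above, so its only lattice points are its four vertices and $P=\conv(X_{a,b})$ works. In the remaining cases I would place the four vertices of $P$ at four distinct non‑integer heights straddling $\{0\}\cup[a,b]$, chosen so that the $x_3$‑axis passes through the interior of $P$ for all heights in $[a,b]$ (capturing every axis point), so that the section at $x_3=0$ is a quadrilateral equal to a small perturbation of $[0,1]^2$, meeting $\Z^2$ exactly in the corners $\{(0,0),(1,0),(0,1),(1,1)\}$, and so that the sections at the remaining integer heights are triangles meeting $\Z^2$ only in the origin. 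Because the vertices sit at non‑integer heights, every edge of $P$ is automatically lattice‑point‑free, so $P\cap\Z^3$ is determined by the finitely many integer‑height sections, which are arranged to equal $X_{a,b}$.

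The main obstacle is this construction in the genuinely two‑sided case $a<0<b$. The naive choice with two vertices directly above and two below the plane $x_3=0$ fails, because there the section is a parallelogram, hence centrally symmetric about its centre; a parallelogram whose centre is the half‑integer point $(\tfrac12,\tfrac12)$ and which contains the origin must also contain the reflected lattice point $(1,1)$, so it cannot carry the offset unit square without catching a forbidden point at a neighbouring height. Using four \emph{pairwise distinct} vertex heights $z_1<a$, $z_2\in(a,0)$, $z_3\in(0,b)$, $z_4>b$ breaks this symmetry: the section at $x_3=0$ becomes an honest non‑centrally‑symmetric quadrilateral (approximating $[0,1]^2$), while the sections at $x_3=\pm1,\dots$ become triangles meeting $\Z^2$ only at the axis point. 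This reduces the statement to a finite, explicit verification of the integer‑height sections, after which $\rc(X_{a,b})=\rc_\Q(X_{a,b})=4$ follows.
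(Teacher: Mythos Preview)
Your lower bound and the $\rc=\rc_\Q$ argument are fine and match the paper. The gap is in the upper bound: your single ``four vertices at four distinct non-integer heights'' scheme, with the section at $x_3=0$ hitting all four corners of the unit square including the origin, only makes sense when $0\in[a,b]$. You never treat the case $0<a\le b$ (or, symmetrically, $a\le b<0$). There the origin is \emph{not} in $X_{a,b}$, so any tetrahedron whose height-$0$ slice contains $(0,0)$ is not a relaxation; likewise your claim that ``the sections at the remaining integer heights are triangles meeting $\Z^2$ only in the origin'' would force the points $e_3,\ldots,(a-1)e_3$ into $P$, none of which lie in $X_{a,b}$. Your height constraints $z_2\in(a,0)$, $z_3\in(0,b)$ are vacuous in this regime, so the two-sided argument does not silently cover it.

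This case genuinely needs a different picture: the paper takes a triangular base at height $0$ containing $e_1,e_2,e_1+e_2$ but \emph{excluding} the origin, and a single apex above height $b$ (at $(-\tfrac12,-\tfrac12,\tfrac{3b}{2})$), with the base chosen depending on $a/b$ so that the section first misses the origin for heights $1,\ldots,a-1$ and then catches it exactly for $a,\ldots,b$. Even in the two-sided case $a<0<b$ your sketch stops at ``a finite, explicit verification''; the paper's construction there is not a small perturbation of $[0,1]^2$ at all but passes through an auxiliary lattice $\Lambda$ and a linear change of coordinates, so you should expect to have to write down actual vertices and check the sections rather than leave it as a qualitative description.
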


\begin{proof}
The lower bound $\rc(X_{a,b}) \ge 4$ follows by Corollary~\ref{cor:rc>=d+1:d<=4} since $X_{a,b}$ is clearly full-dimensional.
For the upper bound, we construct an explicit relaxation of $X_{a,b}$ with four facets.

It is enough to consider the following cases:
\begin{enumerate}
	\item $0= a < b$,
	\item $0 < a \leq b$,
	\item $a < 0 < b$.
\end{enumerate}

\noindent \emph{Case $0=a< b$:}
The triangle $T$ with vertices 
\[
	(-1/2,-1/2), (1/4,7/4), (7/4,1/4)
\]
is a relaxation of $[0,1]^2$ with the property that the vertex $0$ of $[0,1]^2$ lies in the interior of $T$, while the other three vertices lie in the boundary of $T$.
It is clear that the tetrahedron~$P$ with base $T \times \{0\}$ and apex $b e_3$ is a relaxation of $X_{a,b} = X_{0,b}$.

\begin{figure}
\begin{center}
\begin{tikzpicture}
	\foreach \x in {1/3,2/3,3/3} {
		\draw[red,thick] ({\x*(-1/2)},{\x*(-1/2)}) -- ({\x*(1/4)},{\x*(7/4)}) -- ({\x*(7/4)},{\x*(1/4)}) -- cycle;
	}
	\fill (0,0) circle (0.08);
	\fill (1,0) circle (0.08);
	\fill (0,1) circle (0.08);
	\fill (1,1) circle (0.08);
	\draw[gray] (0,0) -- (1/4,7/4);
	\draw[gray] (0,0) -- (7/4,1/4);
	\draw[gray] (0,0) -- (-1/2,-1/2);
\end{tikzpicture} 
\end{center}
\caption{The relaxation complexity of $X_{0,3}$ is four. The figure shows the base and the two horizontal cross-sections of the tetrahedron $P$ that relaxes $X_{0,3}$.}
\end{figure}
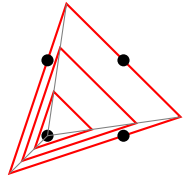

\smallskip
\noindent \emph{Case $0 < a \leq b$:}
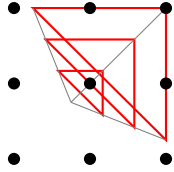
\begin{figure}
\begin{center}
	\begin{tikzpicture}
	\foreach \x in {0,1/3,2/3} {
		\draw[red,thick] ({\x*(-1/4)+(1-\x)*(-3/4)},{\x*(-1/4)+(1-\x)*(1)}) -- ({\x*(-1/4)+(1-\x)*(1)},{\x*(-1/4)+(1-\x)*(-3/4)}) -- ({\x*(-1/4)+(1-\x)*(1)},{\x*(-1/4)+(1-\x)*(1)}) -- cycle;
	}
	\draw[gray] (-1/4,-1/4) -- (1,-3/4);
	\draw[gray] (-1/4,-1/4) -- (-3/4,1);
	\draw[gray] (-1/4,-1/4) -- (1,1);
	\foreach \x in {-1,0,1} {
	\foreach \y in {-1,0,1} {
		\fill (\x,\y) circle (0.08);
	}
	}
	\end{tikzpicture} 
\end{center}
\caption{\label{fig:0<a<b} The relaxation complexity of $X_{1,2}$ is four. The figure presents the base and the two horizontal cross-sections of the tetrahedron $P$ that relaxes $X_{1,2}$.}
\end{figure} 
If $a=b$, then $\conv(X)$ is a tetrahedron, and so one has $\rc(X)=4$.
Thus, we assume $a < b$.
We consider the triangle $T$ with vertices 
\[
	(-1 + 2 t,1), (1,-1 + 2t), (1,1),
\]
where 
\[
	t = \frac{a}{3 b - 2a},
\]
and we check that the tetrahedron $P$ with base $T \times \{0\}$ and apex $p =  (-1/2,-1/2,3b/2)$ is a relaxation of $X_{a,b}$.
Note that $0  < t < 1$ so that the projection of $P$ onto~$\R^2$ is a quadrilateral that contains only four lattice points $0, e_1, e_2$ and $e_1+e_2$.
This shows that there are only three types of lattice points that can be contained in $P$, $(0,0,z)$, $(1,0,z)$, $(0,1,z)$ and $(1,1,z)$, with $z \in \Z_{\ge 0}$.
One can readily check that the points $(1,0,z)$, $(0,1,z)$ and $(1,1,z)$ belong to $P$ only for $z=0$.
Thus, it remains to check when $(0,0,z)$ belongs to $P$. Due to the symmetry of $(0,0,z)$ and $P$ with respect to the reflection $(x_1,x_2,x_3) \mapsto (x_2,x_1,x_3)$, it suffices to take the cross-section of $P$ by the hyperplane $x_1=x_2$ and check which points $(0,0,z)$ belong to this cross-section. The cross-section is a triangle with the vertices  $p, (1,1,0)$ and $(t,t,0)$. Thus, we need to check when the condition
\[
	(0,z) \in \conv\left( \{ (-1/2,3b/2), (1,0), (t,0) \} \right) =: C
\]
holds. The cross-section of the triangle $C$ by the line $x_2=0$ is the segment with endpoints 
\[
	(0,a) = \frac{2t}{2 t + 1} \cdot (-1/2, 3b/2) + \frac{1}{2 t + 1} \cdot (t,0)
\]
and
\[
	(0,b) = \frac{1}{3} \cdot (-1/2, 3b/2)  + \frac{2}{3} \cdot (1,0).
\]
This shows that $(0,0,z) \in P$ if and only if $z \in \{a,\ldots,b\}$, as desired (cf.~Figure~\ref{fig:0<a<b} for an illustration).

\smallskip
\noindent\emph{Case $a< 0 < b$:}
Consider the segment 
\[
	S = [-\epsilon,1],
\] where $\epsilon>0$ is sufficiently small.
We define the tetrahedron 
\[
	P = \conv \bigl(\underbrace{ S \times \{0\}}_{\text{horiz. segm.}} \times \underbrace{\{a\}}_{\text{height $a$}} \cup \underbrace{\{0\} \times S}_{\text{vert. segm.}} \times \underbrace{\{b\}}_{\text{height $b$}} \bigr)
\] and consider the horizontal cross-section of~$P$ at height $z$ by introducing 
\[
	P_z := \setcond{(x,y)}{(x,y,z) \in P}. 
\]
For $a \le z \le b$, $P_z$ is the rectangle given by 
\[
	P_z := \left( \frac{z-a}{b-a} \cdot S \right) \times \left( \frac{b-z}{b-a} \cdot S \right).
\]
In particular, the cross-section at height $0$ is given by 
\[
	P_0 = \left( -\frac{a}{b-a} \cdot S \right) \times \left( \frac{b}{b-a} \cdot S \right).
\]
We now define the two-dimensional lattice $\Lambda := \Z b_1 + \Z b_2 $ spanned by the vectors
\[
b_1 := \frac{1}{b-a} \cdot ( - a , - b \epsilon) \quad \text{and} \quad b_2 := \frac{1}{b-a} \cdot ( a \epsilon , b),
\]
which are two opposite vertices of the rectangle $P_0$. 
It turns out that, for a sufficiently small $\epsilon>0$, the set
\[
	X'_{a,b} := P \cap (\Lambda \times \Z)
\] 
is given as 
\[
	X'_{a,b} = \{b_1, b_2,b_1 + b_2\} \cup \setcond{z e_3}{z=a,\ldots,b}.
\]
Consequently, applying the transformation $\phi:\R^3 \to \R^3$, given by $\phi(b_1,0) = e_1$, $\phi(b_2,0) = e_2$, and $\phi(e_3) = e_3$, satisfying $\phi( \Lambda \times \Z) = \Z^3$, we conclude that
\[
	X_{a,b} = \phi(P) \cap \Z^3.
\]
As the relaxation $\phi(P)$ of $X_{a,b}$ is a tetrahedron as well, we are done.
\begin{figure}
\begin{center}
\begin{tikzpicture}[scale=2.5]
	\foreach \x in {1/3,2/3} {
		\filldraw[fill=black!20!white,opacity=0.5] ({\x*(-1/8)},{(1-\x)*(-1/8)}) -- ({\x*(-1/8)},1-\x) -- ((\x,1-\x) -- (\x, {(1-\x)*(-1/8)}) -- cycle;
	}
	\draw[very thick,red,->] (0,0) -- (1/3, -1/12);
	\draw[very thick,red,->] (0,0) -- (-1/24, 2/3); 
	\draw[thick] (-1/8,0) -- (1,0);
	\draw[thick] (0,-1/8) -- (0,1);
	\draw (-1/8,0) -- (0,-1/8) -- (1,0) -- (0,1) -- cycle;
	\foreach \x in {-2,-1,0,1,2,3}
\foreach \y in {-1,0,1,2}
{
	\fill[red] ({\x*(1/3)+\y*(-1/24)},{\x*(-1/12)+\y*(2/3)}) circle (0.03);
}
\end{tikzpicture} 
\end{center}
\caption{An illustration for the construction in the case $a < 0 < b$. The figure depicts the lattice $\Lambda$ (red dots), its generators $b_1,b_2$ (red vectors) and cross-sections of the tetrahedron $P$.}
\end{figure}
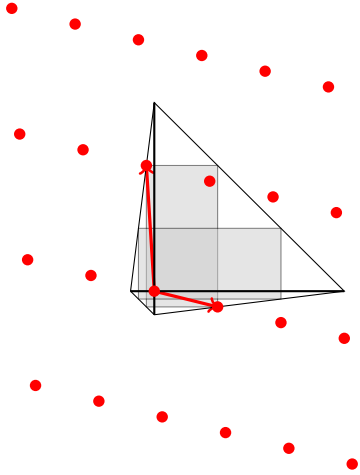 
\end{proof}

\subsection*{Details for Case 2.2.4}

If every lattice-width direction of~$X$ is of type $(2,2)$, then, in the notation from before, all the intersections $H_u^+ \cap X$ and $H_u^- \cap X$, $u \in \D_X$, are two-dimensional.
In view of Proposition~\ref{prop:finite-observers-dim-2}, there are only finitely many observers of $X$ in each of these hyperplanes and they can be computed algorithmically.
Using Lemma~\ref{lem:structure-of-observers}, we thus find an explicitly computable finite set $Y \subseteq \Z^3 \setminus X$ such that $\rc(X)=\rc(X,X \cup Y)$ and invoking Corollary~\ref{cor:X-Y-finite-computability} once more shows computablitity of~$\rc(X)$.

\bibliographystyle{amsplain}
\bibliography{mybib}

\end{document}